\def\namedlabel#1#2{\begingroup
    #2%
    \def\@currentlabel{#2}%
    \phantomsection\label{#1}\endgroup
}
\newcommand*\colvec[3][]{
    \begin{bmatrix}\ifx\relax#1\relax\else#1\\\fi#2\\#3\end{bmatrix}
}
\newcommand{\R}{\mathbb{R}}
\newcommand{\RD}{\mathbb{R}^d}
\newcommand{\N}{\mathbb{N}}
\newcommand{\Z}{\mathbb{Z}}
\newcommand{\cE}{\mathcal{E}}
\newcommand{\cH}{\mathcal{H}}
\newcommand{\cL}{\mathcal{L}}
\newcommand{\cP}{\mathcal{P}}
\newcommand{\cS}{\mathcal{S}}
\newcommand{\bes}{\begin{equation*}}
\newcommand{\ees}{\end{equation*}}
\newcommand{\beas}{\begin{eqnarray*}}
\newcommand{\eeas}{\end{eqnarray*}}
\newcommand{\bea}{\begin{eqnarray}}
\newcommand{\eea}{\end{eqnarray}}
\newcommand{\be}{\begin{equation}}
\newcommand{\ee}{\end{equation}}
\newcommand{\bei}{\begin{itemize}}
\newcommand{\eei}{\end{itemize}}
\newcommand{\bec}{\begin{cases}}
\newcommand{\eec}{\end{cases}}
\newcommand{\ben}{\begin{enumerate}}
\newcommand{\een}{\end{enumerate}}
\newcommand{\bbP}{\mathbb{P}}
\newcommand{\bbE}{\mathbb{E}}
\newcommand{\bbl}{\begin{block}}
\newcommand{\ebl}{\end{block}}
\newcommand{\De}{\mathrm{d}}
\newcommand{\rmD}{\mathrm{D}}
\newcommand{\rmP}{\mathrm{P}}
\newtheorem{mydef}{Definition}[section]
\newtheorem{prop}{Proposition}[section]
\newtheorem{theorem}{Theorem}[section]
\newtheorem{lemma}{Lemma}[section]
\newtheorem{remark}{Remark}[section]
\newtheorem{cor}{Corollary}[section]
\def\namedlabel#1#2{\begingroup
    #2%
    \def\@currentlabel{#2}%
    \phantomsection\label{#1}\endgroup
}
\newcommand{\inv}{\mathbf{m}}
\newcommand{\breta}{\bar{\eta}}
\newcommand{\cop}{\bm{\cL}^{\mathrm{cpl}}} 
\newcommand{\couprate}{\bm{c}^{\mathrm{cpl}}}
\newcommand{\bgamma}{\bar{\gamma}}
\newcommand{\zero}{\mathrm{e}}
\newcommand{\coupgrad}{\bm{\nabla}}
\newcommand{\supp}{\mathcal{S}}
\newcommand{\fphi}{f^{\phi}}
\newcommand{\poiss}{\mathrm{\bm \lambda}}
\title{A probabilistic approach to convex $(\phi)$-entropy decay for Markov chains}
\author{Giovanni Conforti\thanks{D\'epartement de Math\'ematiques Appliqu\'ees, \'Ecole Polytechnique, Route de Saclay, 91128, Palaiseau Cedex, France. giovanni.conforti@polytechnique.edu }}
\begin{document}

\maketitle 
We study the exponential dissipation of entropic functionals along the semigroup generated by a continuous time Markov chain and the associated convex Sobolev inequalities, including MLSI and Beckner inequalities. We propose a method that combines the Bakry \'Emery approach and coupling arguments, which we use as a probabilistic alternative to the discrete Bochner identities. The method is well suited to work in a non perturbative setting and we obtain new estimates for interacting random walks beyond the high temperature/weak interaction regime. In this framework, we show that the exponential contraction of the Wasserstein distance implies MLSI. We also revisit classical examples often obtaining new inequalities and sometimes improving on the best known constants. In particular, we analyse the zero range dynamics, hardcore and Bernoulli-Laplace models and the Glauber dynamics for the Curie Weiss and Ising model.

\tableofcontents

\section{Introduction}
Functional inequalities are powerful tools to quantify the trend to equilibrium of Markov semigroups and have a wide range important applications to the concentration of measure phenomenon and hypercontractivity. In a seminal work \cite{BAKEM} Bakry and \'Emery showed that a diffusion process on a Riemannian manifold whose generator is of the form 
$\mathcal{L}=\Delta + \nabla V\cdot \nabla$ satisfies the logarithmic Sobolev inequality if the pointwise bound 

\be\label{eq: Bak Em cond} \mathrm{Ric}+\mathrm{Hess} V \geq \kappa >0\ee 
holds.  Over the last three decades many profound results have been obtained in connection with the Bakry \'Emery condition \eqref{eq: Bak Em cond} and optimal transport, see the monographs \cite{villani2008optimal,bakry2013analysis}. Because of this success, considerable effort has been put into transferring the ideas and concepts of Bakry \'Emery theory to the setting of continuous time Markov chains. For example, Caputo et al. \cite{boudou2006spectral,caputo2009convex,dai2013entropy,samson2020entropic} developed a method based on a discrete analogous of Bochner's identity and obtained estimates on the spectral gap and entropy dissipation estimates for a large class of non local dynamics, whereas the more general problem of defining a notion of discrete Ricci curvature has been tackled in \cite{Olli,mielke2013geodesic,gozlan2014displacement}. In particular, the notion of entropic Ricci curvature put forward in \cite{maas2011gradient,erbar2012ricci} has deep implications in terms of functional inequalities. Explicit lower bounds for the entropic Ricci curvature in concrete examples have been recently obtained in \cite{erbar2015discrete,fathi2016entropic} and \cite{erbar2017ricci}. The exponential decay of general entropic functionals, called  $\phi$-entropies has been recently investigated in \cite{jungel2017discrete}, whereas functional inequalities for non linear Markov chains are the object of the preprint \cite{erbar2019entropic}. In this article we develop a probabilistic approach to establish convex Sobolev inequalites and quantify the exponential decay of $\phi$-entropies for continuous time Markov chains via the Bakry \'Emery method. Moreover, we apply this method on specific model examples and obtain explicit lower bounds. 
\paragraph*{Discrete convex Sobolev inequalities} In order to introduce $\phi$-entropies and discrete convex Sobolev inequalities, we consider a continuous time Markov chain on a countable state space $\Omega$, whose infinitesimal generator $\mathcal{L}$ takes the form

\be\label{def: Markov gen}\cL f(\eta) =  \sum_{\sigma\in G} c(\eta,\sigma) \nabla_\sigma f(\eta),  \ee

where $G$ is a collection of maps $\sigma:\Omega\longrightarrow\Omega$ called moves, $c:\Omega\times G\rightarrow \R_{\geq 0}$ are the transition rates and $\nabla_{\sigma}f(\eta)$ denotes the discrete gradient $\nabla_{\sigma}f(\eta)=f(\sigma\eta)-f(\eta)$. We will also assume that the Markov chain is reversible and denote  $\inv$ the reversible measure. Given a convex function $\phi:\R_{\geq0}\longrightarrow\R_{\geq0}$ and a positive function $f:\Omega\longrightarrow\R_{>0}$ the $\phi$-entropy $\cH^{\phi}(f|\inv)$ of $f$ is defined as follows:

\be\label{def: entropy}
\cH^{\phi}(f|\inv) = \sum_{\eta\in\Omega} \phi(f)(\eta) \inv(\eta) -  \phi \Big(\sum_{\eta\in\Omega} f(\eta) \inv(\eta)\Big).
\ee
In this work we are interested in estimating the best constant $\kappa_{\phi}$ such that the estimate

\be\label{phi entropy decay}
\cH^{\phi}(S_tf|\inv) \leq \exp(-\kappa_{\phi}t)\cH^{\phi}(f|\inv)
\ee
holds uniformly on $f>0$ and $t>0$. In the above, we denoted by $S_t$ the Markovian semigroup generated by $\mathcal{L}$. It is well known that \eqref{phi entropy decay} is equivalent to the \emph{convex Sobolev inequality}
\be\label{convex sob}
\forall f>0, \quad \kappa_\phi \cH^{\phi}(f|\inv) \leq \cE(\phi'(f),f),
\ee
where $\cE(f,g)$ is the Dirichlet form
\bes \cE(f,g)=-\sum_{\eta\in\Omega}f(\eta)\,(\cL g)(\eta)\, \inv(\eta).\ees
The family of convex Sobolev inequalities is quite rich. Indeed, defining 
\bes
\phi_{\alpha}(a)=\begin{cases}\frac{1}{\alpha-1}(a^\alpha-a)-a+1, \quad & \mbox{if $\alpha\in(1,2]$}\\
a \log a - a+1, \quad & \mbox{if $\alpha=1$}
\end{cases}
\ees
we get that \eqref{convex sob} is the \emph{modified logarithmic Sobolev inequality} (henceforth MLSI) for $\alpha=1$,
\be\label{log sob}
\forall f>0, \quad \kappa_1 \cH^{\phi_{1}}(f|\inv) \leq \cE(\log f,f).
\ee
For $\alpha=2$ we recover the \emph{Poincaré inequality}, whereas for $\alpha\in(1,2)$ we find the family of (discrete) \emph{Beckner inequalities} \cite{beckner1975inequalities,beckner1989generalized}.

\be\label{beckner ineq}
\forall f>0, \quad \kappa_\alpha \cH^{\phi_{\alpha}}(f|\inv) \leq \frac{\alpha}{\alpha-1}\cE(f^{\alpha-1},f).
\ee

For diffusions on a Riemannian manifold it is known \cite{arnold2001convex} that \eqref{convex sob} holds with $\kappa_{\phi}=\kappa$ if the Bakry \'Emery condition \eqref{eq: Bak Em cond} is satisfied, $\phi$ is convex and $\frac{1}{\phi''}$ is concave. Our strategy for establishing \eqref{phi entropy decay} and \eqref{convex sob} for Markov chains follows the original idea of \cite{BAKEM}, that is to prove the stronger convexity estimate

\be\label{bakem gamma 2}
\frac{\De^2}{\De t^2} \cH^{\phi}(S_tf|\inv) \geq \kappa_{\phi} \cE(\phi'(S_tf),S_tf),
\ee
To see why \eqref{bakem gamma 2} implies \eqref{convex sob} we recall that, at least formally we have

\bes
\frac{\De}{\De t} \cH^{\phi}(S_tf|\inv) = -\cE(\phi'(S_tf),S_tf).
\ees

Therefore, we obtain from \eqref{bakem gamma 2} and Gronwall's lemma that $\frac{\De}{\De t} \cH^{\phi}(S_tf|\inv) \longrightarrow0$. From this, \eqref{convex sob} follows integrating \eqref{bakem gamma 2} over $[t,\infty)$ provided that $\cH^{\phi}(S_tf|\inv)\longrightarrow 0$. In the continuous setting \eqref{bakem gamma 2} is obtained via Bochner's identity and pointwise comparison between the so called $\Gamma$ and $\Gamma_2$ operators. For Markov chains comparing first and second derivative of the entropy has proven to be quite challenging, and the picture is not fully clear yet.
\paragraph{Probabilistic approach to convex entropy decay}  In this work, we develop a method for establishing \eqref{bakem gamma 2} based on the notion \emph{coupling rates}. In order to define coupling rates, it is convenient to augment the set $G$ with a null element $\zero$ and set $G^*=G\cup\{\zero\}$.
 
\begin{mydef}\label{def couprates}
Let $\eta,\breta\in\Omega$ and $\cL$ as in \eqref{def: Markov gen}. We say that $\couprate(\eta,\breta,\cdot,\cdot):G^*\times G^* \longrightarrow \R_{\geq 0}$ are coupling rates for $(\eta,\breta)$ if

\begin{align}\label{eq:couprates}
\forall \gamma\in G, \quad \sum_{\bgamma \in G^*}\couprate(\eta,\breta,\gamma,\bgamma) = c(\eta,\gamma),\\
\nonumber \forall \bgamma\in G, \quad \sum_{\gamma \in G^*}\couprate(\eta,\breta,\gamma,\bgamma) = c(\breta,\bgamma).
\end{align}

\end{mydef}
 
If coupling rates are available for any pair $(\eta,\breta)$ then one can define a Markov generator $\cop$ acting on  $F:\Omega\times \Omega \longrightarrow \R$ as follows
\bes
\cop F (\eta,\breta) =\sum_{\gamma,\bgamma\in G^*} \couprate(\eta,\breta,\gamma,\bgamma) \coupgrad_{\gamma,\bgamma}F(\eta,\breta),
\ees
where  $\coupgrad_{\gamma,\bgamma}F(\eta,\breta):= F(\gamma\eta,\bgamma\breta)-F(\eta,\breta)$. A Markov chain on $\Omega\times\Omega$ with generator $\cop$ started at $(\eta,\breta)$ indeed realizes a coupling of a Markov chain with generator $\cL$ started at $\eta$ and of a Markov chain with generator $\cL$ started at $\breta$. For diffusions on Riemannian manifolds, it is well known that the fundamental gradient estimate \cite[Thm. 3.2.4]{bakry2013analysis} can be obtained with a coupling argument, see \cite{wang1997estimation}. Concerning Markov chains, we recall that couplings are a fundamental ingredient in the notion of coarse Ricci curvature \cite{Olli}. However, this notion is not known to imply neither \eqref{bakem gamma 2} nor \eqref{convex sob}. Finally, we remark that Chen has obtained in a series of paper (see for instance \cite{mufa1994optimal}) bounds on the spectral gap for birth and death chains by means of coupling arguments and that in the recent preprint \cite{hermon2019entropy} couplings are employed to obtain MLSI for inhomogeneous zero range processes using the so called martingale method. We conclude this introductory section summarizing the main contributions of this work.

\bei 
\item We propose a probabilistic alternative to the discrete Bochner identities of \cite{caputo2009convex}, upon which a large part of the results about MLSI recently obtained in connection with the Bakry \'Emery method rely. The notion of ``admissible function'' \cite[Def. 2.3]{dai2013entropy} is replaced by that of coupling rates. Although there is no blackbox for producing an efficient coupling in view of obtaining \eqref{bakem gamma 2}, there are some general guidelines. In particular, as one may expect, it is often convenient to construct the coupling rates in such a way that the associated Markov chain on $\Omega \times \Omega$ reaches as quickly as possible the diagonal $\{ (\eta,\eta):\eta\in \Omega\}$ and if it starts from the set $\{\eta,\breta: \exists \, \sigma\in G \,  \text{s.t.}  \, \breta=\sigma\eta \}$, it never leaves it. Therefore, the method is quite robust and could be used to analyse a wider class of models than those studied here.

\item A cornerstone result of Bakry \'Emery theory \cite{BAKEM} asserts that strongly log-concave probability measures on $\RD$ satisfy the logarithmic Sobolev inequality with a positive constant. This powerful geometric criterion is non perturbative, in the sense that it is satisfied by probability measures that may be \emph{far} from being product measures. On the contrary, most results for continuous time Markov chains are perturbative in spirit, ensuring positive lower bounds on the MLSI constant only if the interaction is small and $\inv$ is almost a product measure. In light of these observations, it is very natural to seek for non perturbative sufficient conditions on the generator of a continuous time Markov chain on $\N^d$ implying MLSI. To the best of our knowledge, such results have only been obtained for $d=1$, with the exception of some two dimensional examples treated in \cite{dai2013entropy}. It turns out that the use of coupling rates enables to lift the obstacles that have limited non perturbative criteria to the one dimensional setup and we shall propose at Theorem \ref{thm: LSI int random walks} below a sufficient non perturbative condition for MLSI and general convex Sobolev inequalities that is valid for any value of $d$. As a corollary, we obtain that multiplying a multidimensional Poisson distribution by a density of the form $\exp(-V)$ yields a probability measure satisfying MLSI if a local condition at the origin holds and the Hessian of the potential $V$ has non negative entries. This creates a curious parallelism with the above mentioned result for probability measures on $\RD$, where it is the non negativity of $\mathrm{Hess}\, V$ as a quadratic form that plays an essential role.

\item For interacting random walks, we show at Theorem \ref{thm: wass contr} that the sufficient condition for MLSI and convex Sobolev inequalities proposed at Theorem \ref{thm: LSI int random walks} is equivalent to an exponential contraction estimate for the Wasserstein distance along the semigroup generated by $\cL$. For diffusions on a Riemannian manifold it is known that the best constant in the logarithmic Sobolev inequality is at least as good as the best constant in the exponential contraction of the Wasserstein distance, see \cite{von2005transport} for example.
This fundamental result served as an inspiration for the notion of coarse Ricci curvature \cite{Olli} and it is a natural question to ask whether it admits a counterpart in the setting of continuous time Markov chains. To the best of our knowledge, this question has remained unanswered so far and Theorem \ref{thm: wass contr} settles it when the state space is $\N^d$. It is reasonable to expect that the conclusions of Theorem \ref{thm: wass contr} hold in a broader setup, for instance in that of section \ref{sec: spin}.

\item The proposed method provides with a unified framework for the study of general convex Sobolev inequalities, MLSI and Beckner inequalities. The literature about convex Sobolev inequalities for Markov chains is not abundant, see \cite{jungel2017discrete,bobkov2006modified}. Therefore, in many of the examples we analyse, the lower bounds on $\kappa_{\phi}$ that we obtain seem to be new. Concerning MLSI, we can sometimes improve on the best known estimates for $\kappa_1$ we are aware of, see sections \ref{sec: curie}, \ref{sec: Ising} and \ref{sec: hardcore}.
\eei
 
 \paragraph{Organization} In section \ref{sec: method} we state the basic assumptions and outline the method in an abstract setup. In section \ref{sec: interacting random walks} we present a general criterion that applies in particular to  interacting random walks. Moreover, we provide an interpretation of the lower bounds in terms of Wasserstein contraction. Another criterion is given in section \ref{sec: spin} that covers many classical spin systems. Section \ref{sec: review} deals with some classical models well studied in the literature: Bernoulli-Laplace models, hardcore models and zero range dynamics on the complete graph.

\section{Coupling rates and convex entropy decay}\label{sec: method}

In this section we state our main assumptions and give some simple but rather general results on how to use coupling rates to obtain convexity estimates for the evolution of entropic functionals. 

\subsection{Setup and main assumptions}
Given a state space $\Omega$ that is at most countable, a finite set of moves $G$, and non negative transition rates $c(\eta,\sigma)$ we consider the formal generator \eqref{def: Markov gen}. We make the following basic assumption.

\begin{itemize}
    \item[\namedlabel{H0}{\textbf{(H0)}}] The set $G$ is finite. $\cL$ is irreducible and admits an invariant probability measure $\inv\in\cP(\Omega)$ that satisfies
    $$ \sum_{\eta\in\Omega,\sigma\in G} c(\eta,\sigma)\inv(\eta)<+\infty. $$
\eei

It is well known, see for instance \cite{NORRIS}, that under \ref{H0} the invariant measure is unique and for any initial $\eta\in\Omega$ there exists a continuous time Markov chain $(X_t)_{t\geq0}$ whose infinitesimal generator is $\cL$ and such that $X_0=\eta$. Moreover, $(X_t)_{t\geq 0}$ is non-explosive. Following closely \cite{dai2013entropy} we also assume that $\inv$ is reversible for $\cL$ and that each move admits an ``inverse".

\bei 
\item[\namedlabel{H1}{\textbf{(H1)}}] There exists an involution
    \begin{align*}
        G \longrightarrow G\\
        \sigma\mapsto \sigma^{-1}
    \end{align*}
    
    such that $\sigma^{-1}(\sigma(\eta))=\eta$ holds whenever $\inv(\eta)c(\eta,\sigma)>0$ and

\be\label{eq rev}
\quad \sum_{\substack{\eta\in\Omega \\\sigma\in G}}F(\eta,\sigma)c(\eta,\sigma)\inv(\eta) = \sum_{\substack{\eta\in\Omega \\\sigma\in G}}F(\sigma\eta,\sigma^{-1})c(\eta,\sigma)\inv(\eta)
\ee

 holds for all bounded $F:\Omega\times G\longrightarrow \R$. 

\end{itemize}

Next, we shall define the functional inequalities that are the main object of interest of this paper. To avoid having to discuss the domain of $\cL$ and of the associated Dirchlet form, we begin by assuming that $\Omega$ is finite. In this case the Dirichlet form $\cE(f,g)$ can be defined for any pair of real valued functions $f$ and $g$ as

\be\label{dir form}
\cE(f,g) := -\sum_{\eta\in\Omega}g(\eta)(\cL f)(\eta) \inv(\eta).
\ee

It is well known (see for instance \cite[Eq. (2.12)]{dai2013entropy}) that under \ref{H1} we can rewrite $\cE(f,g)$ using \eqref{eq rev} as follows
 
 \be\label{dir form 2}
\cE(f,g)= \frac{1}{2} \sum_{\substack{\eta\in\Omega\\\sigma\in G}}c(\eta,\sigma)\nabla_{\sigma}f(\eta) \nabla_{\sigma}g(\eta) \, \inv(\eta). 
 \ee
 
Therefore, recalling the definition \eqref{def: entropy}  of $\phi$-entropy $\cH^{\phi}(\cdot|\inv)$ we have that for a given convex function $\phi:\R_{\geq 0}\longrightarrow \R_{\geq 0}$, the convex Sobolev inequality \eqref{convex sob} holds with constant $\kappa_{\phi}$ if and only if 

\be\label{eq: convex sob rev}
\forall f>0, \quad \cH^{\phi}(f|\inv) \leq \frac{\kappa_{\phi}}{2} \sum_{\substack{\eta\in\Omega\\\sigma\in G}} \nabla_{\sigma } (\phi'\circ f) \nabla_{\sigma}f(\eta)\, c(\eta,\sigma)   \inv(\eta).
\ee

Note that the convexity of $\phi$ makes sure that the right hand side of \eqref{eq: convex sob rev} is well defined even when $\Omega$ is not finite but countable and we shall use \eqref{eq: convex sob rev} as a definition of convex Sobolev inequality for countable state spaces. If $\Omega$ is finite, the fact that \eqref{eq: convex sob rev} implies the entropy dissipation estimate \eqref{phi entropy decay} is an immediate consequence of  

\bes
\forall t>0, \quad \frac{\De}{\De t} \cH^{\phi}(S_tf|\inv) = -\cE(\phi'(S_tf),S_tf).
\ees
When $\Omega$ is countable, some extra care has to be used as the above relation may not be valid for all positive $f$. For MLSI ($\phi(a)=a\log a -a +1$) and under hypothesis \ref{H0},\ref{H1} the validity of \eqref{phi entropy decay} is covered by  \cite[Prop 2.1]{dai2013entropy}. Some minor modifications of the argument therein cover the case of a general convex $\phi$. For the sake of brevity, we do not provide details here. In this article, we seek for conditions implying the convex Sobolev inequality that can be read directly off the generator $\cL$. In view of \eqref{eq: convex sob rev}, it is convenient to introduce the function $\Phi$ defined by

\bes
  \Phi: \R^2_{>0} \longrightarrow \R_{>0}, \quad \Phi(a,b):=(\phi'(b)-\phi'(a))(b-a).
 \ees
 A natural assumption for our method to work is the following
\bei
  \item[\namedlabel{H2}{\textbf{(H2)}}] $\phi$ is convex and the function
 $\Phi$ is also convex.
\eei

When $\phi=\phi_{\alpha}$, the function $\Phi$ is denoted $\Phi_{\alpha}$. We will show at Lemma \ref{lem: reny admissible}  that $\Phi_{\alpha}$ satisfies \ref{H2}.

\subsection{Coupling rates and second derivative of the entropy}

The goal of this section is to show how one can use coupling rates to organize the terms originating from differentiating $\cE(\phi'(S_t f),S_t f)$ and find appropriate lower bounds in view of establishing \eqref{bakem gamma 2}. We begin by recording some useful properties of $\Phi_{\alpha}$ that we shall use to obtain Beckner inequalites and MLSI. From now on, for a differentiable function $(a,b)\mapsto \Phi(a,b)$ we denote by $\rmD \Phi(a,b)$ the Jacobian, i.e. the $1\times 2$ matrix $[\partial_a \Phi(a,b),\partial_b\Phi(a,b)]$. We also use the notation $\cdot$ for the standard matrix-vector product.

 \begin{lemma}\label{lem: reny admissible}
 Let $\alpha\in[1,2]$. Then $\phi_{\alpha}$ satisfies \ref{H2}. Moreover
 \bei
 \item If $a,b,a',b'>0$ are such that $a'=b'$ we have
 \be\label{eq:Beck improve}
\Phi_{\alpha}(a',b')-\Phi_{\alpha}(a,b)- \mathrm{D}\Phi_{\alpha} (a,b) \cdot \colvec{a'-a}{b'-b} \geq (\alpha-1) \Phi_{\alpha}(a,b). 
\ee
\item For all $a,b>0$ we have 
\begin{align}\label{eq: MLSI improve}
\nonumber &\Phi_{1}(a,a)-\Phi_{1}(a,b)- \mathrm{D}\Phi_{1} (a,b) \cdot \colvec{0}{a-b}  \\
&+ \Phi_{1}(b,b)-\Phi_{1}(a,b)- \mathrm{D}\Phi_{1} (a,b) \cdot \colvec{b-a}{0} \geq 2\Phi_1(a,b).
\end{align}

\eei
 \end{lemma}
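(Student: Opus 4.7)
The plan is to dispatch each of the three claims by direct computation, leveraging throughout the identity $\phi_\alpha''(x)=\alpha x^{\alpha-2}$ (with the convention $\phi_1''(x)=1/x$) and the integral representation
\[
\Phi_\alpha(a,b)=(b-a)^2\int_0^1 \phi_\alpha''\bigl(a+t(b-a)\bigr)\,\De t,
\]
which follows from the fundamental theorem of calculus applied to $\phi_\alpha'(b)-\phi_\alpha'(a)$. Convexity of $\phi_\alpha$ is immediate from $\phi_\alpha''>0$ on $\R_{>0}$. For the remaining convexity requirement in \ref{H2}, i.e.\ that of $\Phi_\alpha$, I would invoke the display above: it suffices to show that for each fixed $t\in[0,1]$, the map $(a,b)\mapsto \alpha (b-a)^2\bigl((1-t)a+tb\bigr)^{\alpha-2}$ is convex on $\R_{>0}^2$. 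The linear change of coordinates $u=(1-t)a+tb$, $v=b-a$ has unit determinant, so the problem reduces to convexity of $g(u,v)=v^2 u^{\alpha-2}$ on $\{u>0\}$. A direct Hessian computation yields $\det\mathrm{Hess}\,g=2(2-\alpha)(\alpha-1)v^2 u^{2\alpha-6}\geq 0$ together with strictly positive diagonal entries for $\alpha\in[1,2]$, so the Hessian is positive semi-definite and $\Phi_\alpha$ is convex.

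For the Beckner-type improvement \eqref{eq:Beck improve}, I would start by writing out
\[
D\Phi_\alpha(a,b)=\bigl[\,{-}\phi_\alpha''(a)(b-a)-(\phi_\alpha'(b)-\phi_\alpha'(a)),\; \phi_\alpha''(b)(b-a)+(\phi_\alpha'(b)-\phi_\alpha'(a))\,\bigr].
\]
Since $\Phi_\alpha(c,c)=0$ for $c:=a'=b'$, the claimed inequality rearranges into
\[
\phi_\alpha''(a)(b-a)(c-a)-\phi_\alpha''(b)(b-a)(c-b)\;\geq\;(\alpha-1)\Phi_\alpha(a,b).
\]
Substituting $\phi_\alpha''(x)=\alpha x^{\alpha-2}$ and the explicit form $\Phi_\alpha(a,b)=\tfrac{\alpha}{\alpha-1}(b^{\alpha-1}-a^{\alpha-1})(b-a)$ valid for $\alpha>1$, and using $a\cdot a^{\alpha-2}=a^{\alpha-1}$ (and likewise for $b$), the cross terms telescope and the inequality collapses to
\[
\alpha\, c\,(b-a)\bigl(a^{\alpha-2}-b^{\alpha-2}\bigr)\geq 0.
\]
This holds because $x\mapsto x^{\alpha-2}$ is non-increasing on $\R_{>0}$ for $\alpha\leq 2$, forcing $(b-a)$ and $(a^{\alpha-2}-b^{\alpha-2})$ to share sign, while $c>0$. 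The borderline case $\alpha=1$ follows already from the convexity of $\Phi_1$ established above, since then the right-hand side of \eqref{eq:Beck improve} vanishes.

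For the MLSI-type improvement \eqref{eq: MLSI improve}, using $\Phi_1(a,a)=\Phi_1(b,b)=0$ together with $\partial_a\Phi_1=-(b-a)/a-\log(b/a)$ and $\partial_b\Phi_1=(b-a)/b+\log(b/a)$, the left-hand side simplifies to $-2\Phi_1(a,b)+(b-a)\bigl[\partial_b\Phi_1-\partial_a\Phi_1\bigr]$. A short calculation gives $(b-a)[\partial_b\Phi_1-\partial_a\Phi_1]=(b-a)^2(1/a+1/b)+2\Phi_1(a,b)$, so the $\pm 2\Phi_1$ terms cancel and the left-hand side reduces to $(b-a)^2(1/a+1/b)$. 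The target inequality thus reads $(b-a)^2(1/a+1/b)\geq 2(b-a)\log(b/a)$, which is symmetric in $a,b$; assuming WLOG $b\geq a$ and setting $t=b/a\geq 1$, division by $a(t-1)$ (the case $t=1$ being trivial) reduces everything to the scalar inequality $t-1/t\geq 2\log t$ for $t\geq 1$. This follows from $h(t):=t-1/t-2\log t$ satisfying $h(1)=0$ and $h'(t)=(1-1/t)^2\geq 0$.

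I expect the main obstacle to be the second step: organizing the algebra so that the $\phi_\alpha''(a)\cdot a$ and $\phi_\alpha''(b)\cdot b$ pieces recombine with the contributions from $\Phi_\alpha(a,b)$ into the single clean factor $c(b-a)(a^{\alpha-2}-b^{\alpha-2})$. The convexity step is a routine positive semi-definiteness check, and the MLSI step is essentially a one-variable calculus exercise.
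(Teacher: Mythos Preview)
Your proof is correct. For parts \eqref{eq:Beck improve} and \eqref{eq: MLSI improve} your argument is essentially the same as the paper's: both compute the left-hand side of \eqref{eq: MLSI improve} to be $(b-a)^2(1/a+1/b)$ and reduce \eqref{eq:Beck improve} to the nonnegativity of $\alpha\,c\,(b-a)(a^{\alpha-2}-b^{\alpha-2})$; the paper phrases the latter via concavity of $x\mapsto x^{\alpha-1}$ while you invoke monotonicity of $x\mapsto x^{\alpha-2}$, and for the scalar MLSI inequality the paper uses the integral mean of $1/s$ where you differentiate $t-1/t-2\log t$.

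The genuine difference lies in the verification of \ref{H2}. The paper computes the Hessian of $\Phi_\alpha$ directly and, for $\alpha\in(1,2)$, shows its determinant is nonnegative via a somewhat involved one-variable reduction (introducing $z=a/b$ and studying an auxiliary function $g(z)$ on $[1,\infty)$). Your route---the integral representation $\Phi_\alpha(a,b)=\int_0^1(b-a)^2\phi_\alpha''((1-t)a+tb)\,\De t$ followed by the affine change $(a,b)\mapsto(u,v)$---reduces the question to positive semidefiniteness of the Hessian of $v^2u^{\alpha-2}$, which is a two-line check. This is more economical and makes transparent why the range $\alpha\in[1,2]$ is exactly right: the determinant factor $(2-\alpha)(\alpha-1)$ is nonnegative there. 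One cosmetic point: the diagonal entry $\partial_{uu}(v^2u^{\alpha-2})=(\alpha-2)(\alpha-3)v^2u^{\alpha-4}$ is nonnegative rather than strictly positive (it vanishes when $\alpha=2$ or $v=0$), but this does not affect the conclusion since trace $\geq 0$ and determinant $\geq 0$ already force the $2\times 2$ Hessian to be positive semidefinite.
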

 
 We defer the proof of this algebraic lemma to the appendix. 
 \begin{remark} Assumption \ref{H2} is different from that of \cite{jungel2017discrete}, where it is assumed that $(a-b)/(\phi'(a)-\phi'(b))$ is concave. This assumption implies in particular that $a\mapsto \frac{1}{\phi''(a)}$ is concave, which is the classical hypothesis used for diffusions on manifolds. However, in order to go beyond the study of one dimensional birth and death processes, an extra homogeneity assumption has to be added there.
 \end{remark}
 
 In the next lemma we observe that coupling rates can be used to organize the terms coming from $\frac{\De}{\De t} \cE(\phi'(f_t),f_t)|_{t=0}$ and find a first general upper bound. For the next lemma and in all what follows it is convenient to define $\supp \subseteq \Omega \times G$ as
 
 \bes
 \supp  = \{ (\eta,\sigma)\in \Omega \times G : c(\eta,\sigma)>0 \}.
 \ees
 
 Moreover, we recall that $G^*$ is the set of moves augmented with the null-move $\zero$, i.e. $G^*=G\cup\{\zero\}$
 and $\zero\eta=\eta$ for all $\eta\in\Omega$. To streamline proofs and avoid technicalities we assume that $\Omega$ is finite, although we believe this assumption not to be essential. Finally, to ease notation we shall write $f_t$ instead of $S_tf$.

 \begin{lemma}\label{lem: coupling organize}
 Assume \ref{H0}-\ref{H2} and let $\{\couprate(\eta,\sigma\eta,\cdot,\cdot)\}_{(\eta,\sigma)\in\supp}$ be coupling rates. For all $f>0$ define $\fphi$ as
 \bes
  \fphi:\Omega\times\Omega\longrightarrow\R_{\geq 0}, \qquad \fphi(\eta,\breta)= \Phi(f(\eta),f(\breta)).
 \ees
 We have:

\be\label{eq: organize}
\frac{\De}{\De t} 2 \cE(\phi'(f_t),f_t)\big|_{t=0}= \sum_{\substack{(\eta,\sigma)\in\supp\\ \gamma,\bgamma\in G^*}} c(\eta,\sigma)\couprate(\eta,\sigma\eta,\gamma,\bgamma)\mathrm{D}\Phi(f(\eta),f(\sigma\eta)) \cdot \colvec{\nabla_{\gamma}f(\eta)}{\nabla_{\bgamma}f(\sigma\eta)}\inv(\eta).
\ee
Consequently,
\begin{align}\label{eq: basic upper bound}
  \nonumber\frac{\De}{\De t} 2\cE(\phi'(f_t),f_t)\big|_{t=0} &\leq \sum_{\substack{(\eta,\sigma)\in\supp\\ \gamma,\bgamma\in G^*}}c(\eta,\sigma)\couprate(\eta,\sigma\eta,\gamma,\bgamma) \coupgrad_{\gamma,\bgamma} \fphi(\eta,\sigma\eta)\, \inv(\eta)\\
 & - \sum_{\substack{(\eta,\sigma)\in\supp\\ \gamma,\bgamma\in G^* \\ \gamma\eta=\bgamma\sigma\eta}}c(\eta,\sigma)\couprate(\eta,\sigma\eta,\gamma,\bgamma) \Big(\coupgrad_{\gamma,\bgamma} \fphi(\eta,\sigma\eta)- \mathrm{D}\Phi(f(\eta),f(\sigma\eta)) \cdot \colvec{\nabla_{\gamma}f(\eta)}{\nabla_{\bgamma}f(\sigma\eta)}  \Big) \inv(\eta).
 \end{align}
 \end{lemma}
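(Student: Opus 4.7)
The plan is to establish \eqref{eq: organize} by direct differentiation of the Dirichlet form written in its symmetric form \eqref{dir form 2}, and then to derive \eqref{eq: basic upper bound} as a direct consequence of the convexity of $\Phi$ postulated in \ref{H2}. Since $\Omega$ is assumed finite, no interchange of sums and derivatives needs justification beyond the smoothness of $t\mapsto f_t$ on $\R_{>0}$, which follows from \ref{H0} and the positivity-preserving nature of $S_t$.

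First I would use \ref{H1} to rewrite
\bes
2\cE(\phi'(f_t),f_t) = \sum_{(\eta,\sigma)\in\supp} c(\eta,\sigma)\, \nabla_\sigma(\phi'\circ f_t)(\eta)\, \nabla_\sigma f_t(\eta)\, \inv(\eta) = \sum_{(\eta,\sigma)\in\supp} c(\eta,\sigma)\, f^\phi_t(\eta,\sigma\eta)\, \inv(\eta),
\ees
recognising the product of discrete gradients as $\Phi(f_t(\eta),f_t(\sigma\eta))=f^\phi_t(\eta,\sigma\eta)$ by the very definition of $\Phi$. Differentiating term by term at $t=0$ and applying the chain rule together with $\frac{\De}{\De t}f_t\big|_{t=0} = \cL f$ yields
\bes
\frac{\De}{\De t}f^\phi_t(\eta,\sigma\eta)\big|_{t=0} = \rmD\Phi(f(\eta),f(\sigma\eta)) \cdot \colvec{\cL f(\eta)}{\cL f(\sigma\eta)}.
\ees
Next I would expand each coordinate using the marginals \eqref{eq:couprates}: for $(\eta,\sigma)\in\supp$ and $\breta=\sigma\eta$,
\bes
\cL f(\eta) = \sum_{\gamma\in G} c(\eta,\gamma)\nabla_\gamma f(\eta) = \sum_{\gamma\in G}\sum_{\bgamma\in G^*}\couprate(\eta,\breta,\gamma,\bgamma)\nabla_\gamma f(\eta),
\ees
and because $\nabla_\zero f\equiv 0$, the outer sum may be harmlessly extended to $\gamma\in G^*$. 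The symmetric manipulation for $\cL f(\sigma\eta)$ uses the second marginal. Substituting both expansions, multiplying through by $c(\eta,\sigma)\inv(\eta)$ and summing in $(\eta,\sigma)$ gives precisely \eqref{eq: organize}.

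For the upper bound \eqref{eq: basic upper bound}, I would invoke the convexity of $\Phi$ guaranteed by \ref{H2}: for any $(a,b),(a',b')\in\R^2_{>0}$,
\bes
\Phi(a',b') - \Phi(a,b) - \rmD\Phi(a,b)\cdot\colvec{a'-a}{b'-b} \;\geq\; 0.
\ees
Taking $(a,b)=(f(\eta),f(\sigma\eta))$ and $(a',b')=(f(\gamma\eta),f(\bgamma\sigma\eta))$, the left-hand side equals $\coupgrad_{\gamma,\bgamma}\fphi(\eta,\sigma\eta) - \rmD\Phi(f(\eta),f(\sigma\eta))\cdot\colvec{\nabla_\gamma f(\eta)}{\nabla_{\bgamma}f(\sigma\eta)}$, and this quantity is non-negative. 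Rearranging this inequality and substituting into \eqref{eq: organize} produces the first sum in \eqref{eq: basic upper bound} minus a non-negative correction term ranging over all $(\gamma,\bgamma)\in G^*\times G^*$. Restricting the subtracted correction to the subset $\{\gamma\eta=\bgamma\sigma\eta\}$ can only decrease it, hence can only increase the overall expression, which yields the claimed inequality.

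The only delicate point will be the symbolic bookkeeping that lets the marginal identities \eqref{eq:couprates} (stated for $\gamma,\bgamma\in G$) be used against expansions indexed by $G^*$: once one observes that $\nabla_\zero f\equiv 0$ trivialises every summand containing the null move, the two levels of indexing line up and the remainder of the argument is pointwise algebra combined with the convexity inequality above.
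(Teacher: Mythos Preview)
Your proof is correct and follows essentially the same route as the paper: write $2\cE(\phi'(f_t),f_t)$ via the symmetric Dirichlet form as $\sum c(\eta,\sigma)\Phi(f_t(\eta),f_t(\sigma\eta))\inv(\eta)$, differentiate, insert the coupling-rate expansions of $\cL f(\eta)$ and $\cL f(\sigma\eta)$ to obtain \eqref{eq: organize}, then add and subtract the $\coupgrad_{\gamma,\bgamma}\fphi$ sum and drop the non-negative convexity remainders off the diagonal $\{\gamma\eta=\bgamma\sigma\eta\}$. Your explicit remark that $\nabla_\zero f\equiv 0$ lets the sums over $G$ and $G^*$ be freely interchanged is a helpful clarification that the paper leaves implicit.
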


  \begin{proof}
By definition of $\Phi$ and \eqref{dir form 2} we have for all $f>0$

\bes
2\cE(\phi'(f),f)= \sum_{(\eta,\sigma)\in\cS} c(\eta,\sigma)\Phi(f(\eta),f(\sigma\eta)) \inv(\eta).
\ees
Therefore,
\be\label{eq: second der first form}
\frac{\De}{\De t} 2\cE(\phi'(f_t),f_t)\big|_{t=0} = \sum_{(\eta,\sigma)\in\supp}c(\eta,\sigma) \mathrm{D}\Phi(f_t(\eta),f_t(\sigma\eta)) \cdot \colvec{\cL f(\eta)}{\cL f(\sigma\eta)}\inv(\eta).
\ee
From the definition of coupling rates \eqref{def couprates}, we get that for $(\eta,\sigma)\in\cS$,
\bes
\cL f(\eta) = \sum_{\gamma\in G}c(\eta,\gamma)\nabla_{\gamma}f(
\eta)= \sum_{\gamma,\bgamma\in G^*}\couprate(\eta,\sigma\eta,\gamma,\bgamma)\nabla_{\gamma}f(\eta).
\ees
Rewriting $\cL f(\sigma\eta)$ analogously and plugging the two resulting expression back into \eqref{eq: second der first form} we arrive at \eqref{eq: organize}. To derive \eqref{eq: basic upper bound}  we can first add and substract 
\bes \sum_{\substack{(\eta,\sigma)\in\supp\\ \gamma,\bgamma\in G^*}}c(\eta,\sigma)\couprate(\eta,\sigma\eta,\gamma,\bgamma) \coupgrad_{\gamma,\bgamma} \fphi(\eta,\sigma\eta)\, \inv(\eta)
\ees
in  \eqref{eq: second der first form}. We obtain the equivalent expression
\begin{align}\label{eq: basic upper bound proof}
  \nonumber\frac{\De}{\De t} 2\cE(\phi'(f_t),f_t)\big|_{t=0} &= \sum_{\substack{(\eta,\sigma)\in\supp\\ \gamma,\bgamma\in G^*}}c(\eta,\sigma)\couprate(\eta,\sigma\eta,\gamma,\bgamma) \coupgrad_{\gamma,\bgamma} \fphi(\eta,\sigma\eta)\, \inv(\eta)\\
 & - \sum_{\substack{(\eta,\sigma)\in\supp\\ \gamma,\bgamma\in G^*}}c(\eta,\sigma)\couprate(\eta,\sigma\eta,\gamma,\bgamma) \Big(\coupgrad_{\gamma,\bgamma} \fphi(\eta,\sigma\eta)- \mathrm{D}\Phi(f(\eta),f(\sigma\eta)) \cdot \colvec{\nabla_{\gamma}f(\eta)}{\nabla_{\bgamma}f(\sigma\eta)}  \Big) \inv(\eta).
 \end{align}
Since $\coupgrad_{\gamma,\bgamma}\fphi(\eta,\sigma\eta)=\Phi(f(\gamma\eta),f(\bgamma\sigma\eta))-\Phi(f(\eta),f(\sigma\eta))$, we deduce from the convexity of $\Phi$ that for all $(\eta,\sigma)\in \supp$ and all $\gamma,\bgamma\in G^*$ we have
\bes
\coupgrad_{\gamma,\bgamma} \fphi(\eta,\sigma\eta)- \mathrm{D}\Phi(f(\eta),f(\sigma\eta)) \cdot \colvec{\nabla_{\gamma}f(\eta)}{\nabla_{\bgamma}f(\sigma\eta)} \geq 0. 
\ees
Thus, we obtain the upper bound \eqref{eq: basic upper bound} by dropping in \eqref{eq: basic upper bound proof} all terms such that $\gamma\eta\neq\bgamma\sigma\eta$ .
\end{proof}

We now present a sufficient condition to obtain convex Sobolev inequalities. In there, we again assume for simplicity that $\Omega$ is finite although this is probably not necessary. Under this assumption, the proof that \eqref{bakem gamma 2} implies \eqref{eq: convex sob rev} is straightforward. At Corollary \ref{cor: countable state space} we provde a simple sufficient condition that allows to extend the results for finite to countable state spaces via a localization procedure. 

 \begin{prop}\label{cor: sufficient cond}
 Let $\Omega$ be finite, assume \ref{H0},\ref{H1},\ref{H2} and let $\{\couprate(\eta,\sigma\eta,\cdot,\cdot)\}_{(\eta,\sigma)\in\supp}$ be coupling rates.  If 
 \bei
 \item There exists $\kappa'\geq 0$ such that
 \be\label{eq: discrete gradient estimate}
\frac12\sum_{\substack{(\eta,\sigma)\in\supp\\\gamma,\bgamma\in G^*}}c(\eta,\sigma)\couprate(\eta,\sigma\eta,\gamma,\bgamma) \coupgrad_{\gamma,\bgamma} \fphi (\eta,\sigma\eta)\, \inv(\eta) \leq -\kappa' \cE(\phi'(f),f)
 \ee
 holds uniformly on $f>0$
\item  There exist $\kappa'',\kappa'''\geq 0$ such that
\be\label{def: kappa''}
\inf_{(\eta,\sigma)\in\cS} \min\{\couprate(\eta,\sigma\eta,\sigma,\zero),\couprate(\eta,\sigma\eta,\zero,\sigma^{-1})\}\geq \kappa''
\ee
and 
\be\label{def: kappa'''}
\inf_{(\eta,\sigma)\in\cS} \sum_{\substack{\gamma,\bgamma \in G^*\\\gamma\eta=\bgamma\sigma\eta}} \couprate(\eta,\sigma\eta,\gamma,\bgamma)\geq \kappa'''
\ee
hold.
\eei

Then
 \bei 
 \item[(i)] The convex Sobolev inequality \eqref{convex sob} holds with $\kappa_{\phi}=\kappa'$ for all $\Phi$ satisfying \ref{H2}. 
  \item[(ii)] The modified logarithmic Sobolev inequality \eqref{log sob} holds with $\kappa_{1}=\kappa'+2\kappa''$.
 \item[(iii)] For $\alpha \in (1,2]$, the discrete Beckner inequality \eqref{beckner ineq} holds with $\kappa_{\alpha}=\kappa'+(\alpha-1)\kappa'''$.
 \eei
 \end{prop}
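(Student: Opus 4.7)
The plan is to prove, for each of the three cases, a first-order differential inequality $\frac{\De}{\De t}\mathcal{E}(\phi'(f_t),f_t)\big|_{t=0}\leq -\kappa_\phi\mathcal{E}(\phi'(f),f)$, then to pass from this to \eqref{convex sob} by Gronwall together with the dissipation identity $\frac{\De}{\De t}\mathcal{H}^{\phi}(f_t|\inv)=-\mathcal{E}(\phi'(f_t),f_t)$ and integration from $0$ to $\infty$ (using that $\Omega$ is finite and the chain is irreducible by \ref{H0}, so $\mathcal{H}^{\phi}(f_t|\inv)\to 0$). The natural starting point is the upper bound \eqref{eq: basic upper bound} of Lemma \ref{lem: coupling organize}, which decomposes $\frac{\De}{\De t}2\mathcal{E}(\phi'(f_t),f_t)|_{t=0}$ as $T_1-T_2$, with $T_1=\sum_{\eta,\sigma,\gamma,\bgamma}c(\eta,\sigma)\couprate(\eta,\sigma\eta,\gamma,\bgamma)\coupgrad_{\gamma,\bgamma}\fphi(\eta,\sigma\eta)\inv(\eta)$ the ``coupled'' sum and $T_2$ the sum of convexity defects restricted to pairs $(\gamma,\bgamma)$ satisfying $\gamma\eta=\bgamma\sigma\eta$; by \ref{H2} each summand of $T_2$ is non-negative, and this surplus is precisely what will be exploited in (ii) and (iii).

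For part (i), I would directly apply \eqref{eq: discrete gradient estimate} to obtain $T_1\leq -2\kappa'\mathcal{E}(\phi'(f),f)$ and drop $T_2\geq 0$, which yields $\kappa_\phi=\kappa'$. For part (ii), I would isolate in $T_2$ the two distinguished pairs $(\gamma,\bgamma)=(\sigma,\zero)$ and $(\gamma,\bgamma)=(\zero,\sigma^{-1})$; both belong to the restricted sum, the second one by \ref{H1}. Setting $a=f(\eta)$, $b=f(\sigma\eta)$, a short computation identifies the two associated excess terms with the two convexity defects on the left-hand side of \eqref{eq: MLSI improve}, so Lemma \ref{lem: reny admissible} bounds their sum below by $2\Phi_1(a,b)$; weighting each by its coupling rate, bounded below by $\kappa''$ via \eqref{def: kappa''}, and summing over $\supp$ produces $T_2\geq 4\kappa''\mathcal{E}(\phi'(f),f)$ and hence $\kappa_1=\kappa'+2\kappa''$. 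For part (iii), I would instead retain all qualifying pairs in $T_2$: the constraint $\gamma\eta=\bgamma\sigma\eta$ is exactly the hypothesis $a'=b'$ of \eqref{eq:Beck improve}, so by Lemma \ref{lem: reny admissible} each surviving summand is at least $(\alpha-1)\Phi_\alpha(a,b)$, and \eqref{def: kappa'''} controls the inner sum over $(\gamma,\bgamma)$ to yield $T_2\geq 2\kappa'''(\alpha-1)\mathcal{E}$, whence $\kappa_\alpha=\kappa'+(\alpha-1)\kappa'''$.

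I do not foresee any serious obstacle: the argument is essentially algebraic bookkeeping. The only mildly delicate points are to verify that the excess terms produced by the two chosen pairs in (ii) faithfully reproduce the two defects on the left-hand side of \eqref{eq: MLSI improve} (one with $a'=b=f(\sigma\eta)$, the other with $a=a'=f(\eta)$), and to keep track of the factor $2$ relating $2\mathcal{E}(\phi'(f),f)$ to the sum $\sum_{(\eta,\sigma)\in\supp}c(\eta,\sigma)\Phi(f(\eta),f(\sigma\eta))\inv(\eta)$ via \eqref{dir form 2}, so that the constants in the three bounds on $T_2$ translate correctly into the announced values of $\kappa_\phi$, $\kappa_1$ and $\kappa_\alpha$.
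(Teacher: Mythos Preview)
Your proposal is correct and matches the paper's proof essentially step for step: both start from the decomposition \eqref{eq: basic upper bound}, use \eqref{eq: discrete gradient estimate} to bound $T_1$, drop $T_2\geq 0$ for (i), isolate the pairs $(\sigma,\zero)$ and $(\zero,\sigma^{-1})$ together with \eqref{eq: MLSI improve} for (ii), and retain all diagonal-landing pairs together with \eqref{eq:Beck improve} and \eqref{def: kappa'''} for (iii). Your bookkeeping of the factor $2$ between $2\cE(\phi'(f),f)$ and the $\Phi$-sum is also handled exactly as in the paper.
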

 
 \begin{proof}
 We begin by proving $(i)$. Let $f>0$ and $\Phi$ satisfy \ref{H2} and consider the bound \eqref{eq: basic upper bound}. Using the convexity of $\Phi$ and \eqref{eq: discrete gradient estimate} we deduce that
 \bes
\frac{\De}{\De t} \cE(\phi'(f_t),f_t)\big|_{t=0} \leq -\kappa'\cE(\phi'(f), f).
 \ees
We have therefore established the Bakry \'Emery convexity estimate \eqref{bakem gamma 2} with $\kappa_{\phi}=\kappa'$, from which the desired conclusion follows. To prove $(ii)$ we observe that the convexity of $\Phi$  gives

\begin{align*}
&\sum_{\substack{(\eta,\sigma)\in\supp\\ \gamma,\bgamma\in G^* \\ \gamma\eta=\bgamma\sigma\eta}}c(\eta,\sigma)\couprate(\eta,\sigma\eta,\gamma,\bgamma) \Big(\coupgrad_{\gamma,\bgamma} f^{\phi_1}(\eta,\sigma\eta)- \mathrm{D}\Phi_1(f(\eta),f(\sigma\eta)) \cdot \colvec{\nabla_{\gamma}f(\eta)}{\nabla_{\bgamma}f(\sigma\eta)}  \Big) \inv(\eta)\\
\geq&\sum_{\substack{(\eta,\sigma)\in\supp}}c(\eta,\sigma)\couprate(\eta,\sigma\eta,\zero,\sigma^{-1}) \Big(\coupgrad_{\zero,\sigma^{-1}} f^{\phi_1}(\eta,\sigma\eta)- \mathrm{D}\Phi_1(f(\eta),f(\sigma\eta)) \cdot \colvec{0}{-\nabla_{\sigma}f(\eta)}  \Big) \inv(\eta)\\
+&\sum_{\substack{(\eta,\sigma)\in\supp}}c(\eta,\sigma)\couprate(\eta,\sigma\eta,\sigma,\zero) \Big(\coupgrad_{\sigma,\zero} f^{\phi_1}(\eta,\sigma\eta)- \mathrm{D}\Phi_1(f(\eta),f(\sigma\eta)) \cdot \colvec{\nabla_{\sigma}f(\eta)}{0}  \Big) \inv(\eta)\\
\geq& \kappa'' \sum_{(\eta,\sigma)\in\supp}c(\eta,\sigma)\Big(\coupgrad_{\zero,\sigma^{-1}} f^{\phi_1}(\eta,\sigma\eta)- \mathrm{D}\Phi_1(f(\eta),f(\sigma\eta)) \cdot \colvec{0}{-\nabla_{\sigma}f(\eta)} \\
+&  \coupgrad_{\sigma,\zero} f^{\phi_1}(\eta,\sigma\eta)- \mathrm{D}\Phi_1(f(\eta),f(\sigma\eta)) \cdot \colvec{\nabla_{\sigma}f(\eta)}{0}\Big)\inv(\eta) \\
 \geq& 4\kappa''\cE(\phi'(f),f)
\inv(\eta),
\end{align*}
where to obtain the last inequality we used \eqref{eq: MLSI improve} with $a=f(\eta),b=f(\sigma\eta)$. Combining this last estimate with \eqref{eq: discrete gradient estimate} in \eqref{eq: basic upper bound} yields \eqref{bakem gamma 2} with $\kappa_1=\kappa'+2\kappa''$, from which the desired conclusion follows. The proof of $(iii)$ is analogous. Indeed, \eqref{eq:Beck improve} with $a=f(\eta),b=f(\sigma\eta)$ and $a'=b'=\gamma\eta$ gives
\begin{align*}
&\sum_{\substack{(\eta,\sigma)\in\supp\\ \gamma,\bgamma\in G^* \\ \gamma\eta=\bgamma\sigma\eta}}c(\eta,\sigma)\couprate(\eta,\sigma\eta,\gamma,\bgamma) \Big(\coupgrad_{\gamma,\bgamma} f^{\phi_{\alpha}}(\eta,\sigma\eta)- \mathrm{D}\Phi_{\alpha}(f(\eta),f(\sigma\eta)) \cdot \colvec{\nabla_{\gamma}f(\eta)}{\nabla_{\bgamma}f(\sigma\eta)}  \Big) \inv(\eta)\\
&\geq (\alpha-1) \kappa'''\sum_{\substack{(\eta,\sigma)\in\supp\\ \gamma,\bgamma\in G^*}}c(\eta,\sigma) \Phi_{\alpha}(f(\eta),f(\sigma\eta))\inv(\eta)\\
=&2\kappa''' \cE(\phi'(f),f)
\end{align*}
Using this last estimate and \eqref{eq: discrete gradient estimate} in \eqref{eq: basic upper bound} yields \eqref{bakem gamma 2} with $\kappa_{\alpha}=\kappa'+(\alpha-1)\kappa'''$, from which the desired conclusion follows.
 \end{proof}
It is known \cite{dai2013entropy,jungel2017discrete} that the best constant $\kappa_{\alpha}$ in \eqref{beckner ineq} is such that $\kappa_{\alpha} \leq \kappa_2$ for all $\alpha\in[1,2]$, i.e. Beckner's inequalities and in particular MLSI are stronger than Poincar\'e inequality. When working in a continuous state space, using the fact that the operator $\Gamma$ is a derivation, more relations between the constants $\kappa_{\alpha}$ are known \cite[Sec. 2.8]{bakry2013analysis}. In particular, a Poincaré inequality implies a Beckner inequality for $\alpha\in (1,2]$ with a non-optimal constant. Because of the non locality of generators, this fact that does not carry over in a straightforward way to continuous time Markov chains and no general result in this direction is known to the author. We conclude this section adapting Proposition \ref{cor: sufficient cond} to countable state spaces. As we said above, we achieve this with a localization procedure. To describe it, let $\cL$ be a generator defined on a countable state space $\Omega$ such that \ref{H0},\ref{H1} are satisfied and let $\inv$ be the associated reversible probability measure. Next, consider an increasing sequence of finite subsets $(\Omega^n)_{n\in\N}\subseteq \Omega$ such that $\bigcup_{n\in\N} \Omega^n=\Omega$. For $\sigma\in G$ define the move $\sigma^n:\Omega^n\longrightarrow \Omega^n$  and the rates $c(\eta,\sigma^n)$ as follows:

\be\label{def: loc moves loc rates}
\forall\eta \in\Omega^n,\sigma\in G, \quad \sigma^n\eta= \begin{cases} \sigma\eta,& \mbox{if $\sigma\eta\in\Omega^n$}\\
\eta,& \mbox{otherwise.}
\end{cases}, \quad 
c(\eta,\sigma^n)=  c(\eta,\sigma). 
\ee
 Finally, we consider the generator $\cL^n$ on $\Omega^n$ given by
 \be\label{def: loc gen}
 \cL^n f(\eta) = \sum_{\sigma\in G} c(\eta,\sigma^n)\nabla_{\sigma^n}f(\eta).
 \ee
It is not hard to see that if we denote by $\inv^n\in\cP(\Omega^n)$ the conditioning of $\inv$ to $\Omega^n$, i.e. \[ \forall A\subseteq\Omega^n,\quad \inv^n(A) = \frac{1}{\inv(\Omega^n)} \inv(A),\] 
then $\inv^n$ is invariant for $\cL^n$ and \ref{H1} is satisfied with $(\sigma^n)^{-1}=(\sigma^{-1})^n$.
\begin{cor}\label{cor: countable state space}
Assume that $\cL$ satisfies \ref{H0},\ref{H1}, that $\Omega$ is countable and that $\phi$ satisfies \ref{H2}. Consider an increasing sequence of finite sets $(\Omega^n)_{n\in\N}$ such that $\Omega=\bigcup_{n\in\N}\Omega^n$ and define $\cL^n$ by \eqref{def: loc moves loc rates},\eqref{def: loc gen}. Moreover, assume that there exists $\kappa>0$ such that for all $n\in\N$ the convex Sobolev inequality
 \be\label{eq: localized convex sob}
\forall f>0, \quad \kappa\cH^{\phi}(f|\inv^n) \leq \frac12  \sum_{\substack{\eta\in\Omega^n \\ \sigma\in G }}c(\eta,\sigma^n)\Phi(f(\eta),f(\sigma^n\eta))\inv^n(\eta)
\ee
holds uniformly in $n$. Then the convex Sobolev inequality \eqref{eq: convex sob rev} holds with $\kappa_{\phi}=\kappa$. \end{cor}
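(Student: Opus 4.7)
The strategy is a direct localization argument: apply the uniform-in-$n$ hypothesis \eqref{eq: localized convex sob} to the restriction of $f$ to $\Omega^n$, then send $n\to\infty$. Fix $f>0$ on $\Omega$. The key observation making the argument essentially elementary is that the truncated moves $\sigma^n$ are defined precisely so that ``boundary'' contributions on the right-hand side of \eqref{eq: localized convex sob} vanish automatically: if $\sigma\eta\notin\Omega^n$ then $\sigma^n\eta=\eta$, hence $\Phi(f(\eta),f(\sigma^n\eta))=\Phi(f(\eta),f(\eta))=0$ by the very definition of $\Phi$. Combined with $\inv^n(\eta)=\inv(\eta)/\inv(\Omega^n)$ and $c(\eta,\sigma^n)=c(\eta,\sigma)$, multiplying \eqref{eq: localized convex sob} by $\inv(\Omega^n)$ gives
\[
\kappa\, \inv(\Omega^n)\,\cH^{\phi}(f|\inv^n)\;\leq\;\frac{1}{2}\sum_{\substack{\eta\in\Omega^n,\,\sigma\in G\\ \sigma\eta\in\Omega^n}} c(\eta,\sigma)\,\Phi(f(\eta),f(\sigma\eta))\,\inv(\eta),
\]
which exhibits the right-hand side as a partial sum of the Dirichlet-type quantity appearing in \eqref{eq: convex sob rev}.

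I would then let $n\to\infty$ on both sides. Since $\Phi\geq0$ (as $\Phi(a,b)=(\phi'(b)-\phi'(a))(b-a)$ with $\phi'$ monotone) and the index sets increase to $\Omega\times G$, monotone convergence sends the right-hand side up to $\frac12\sum_{\eta,\sigma}c(\eta,\sigma)\Phi(f(\eta),f(\sigma\eta))\inv(\eta)$; if this limit equals $+\infty$ the target inequality is trivial. For the left-hand side, expanding the $\phi$-entropy,
\[
\inv(\Omega^n)\,\cH^{\phi}(f|\inv^n)\;=\;\sum_{\eta\in\Omega^n}\phi(f(\eta))\,\inv(\eta)\;-\;\inv(\Omega^n)\,\phi\!\Big(\frac{1}{\inv(\Omega^n)}\sum_{\eta\in\Omega^n}f(\eta)\,\inv(\eta)\Big).
\]
The first sum increases to $\sum_{\eta\in\Omega}\phi(f(\eta))\inv(\eta)$ by monotone convergence (using $\phi\geq0$ from the setup), while the second term converges to $\phi(\sum_\eta f(\eta)\inv(\eta))$ by continuity of $\phi$ together with $\inv(\Omega^n)\to1$, as soon as $f$ is $\inv$-integrable. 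Combining these two passages yields \eqref{eq: convex sob rev} with $\kappa_\phi=\kappa$.

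The only delicacy is that the left-hand side limit requires some mild integrability on $f$, which is not explicitly assumed in the statement. I would handle this by first proving the inequality for $f$ bounded above and bounded away from zero, for which all sums are manifestly finite and each convergence above reduces to straightforward monotone or dominated convergence. A standard truncation argument (applying the inequality to approximations of the preceding type and passing to the limit using monotone convergence on the right and lower semicontinuity of $\cH^{\phi}(\,\cdot\,|\inv)$ on the left) then extends the conclusion to arbitrary positive $f$. This density step is routine and I foresee no genuine obstacle; the only nontrivial ingredient of the whole argument is the cancellation $\Phi(a,a)=0$ that suppresses all boundary contributions and thereby exhibits the localized Dirichlet form as an exact partial sum of the global one.
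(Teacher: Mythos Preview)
Your argument is correct and follows essentially the same localization strategy as the paper: apply the uniform-in-$n$ inequality to the restriction of $f$, exploit the cancellation $\Phi(a,a)=0$ at boundary moves, and pass to the limit via monotone convergence, first for nice $f$ and then by approximation. Your reduction to functions bounded above and away from zero is in fact cleaner than the paper's somewhat loose phrasing of taking $f$ ``compactly supported'' (which sits awkwardly with $f>0$), but the substance is identical.
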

 
 \begin{proof}
 Let $f:\Omega\longrightarrow\R$ be compactly supported and denote $\bar\Omega$ the support. If $\bar{n}$ is large enough so that$\Omega^n\supseteq \{ \sigma\eta: \eta\in \bar\Omega,\sigma\in G\}$ for all $n\geq\bar n$. Applying \eqref{eq: localized convex sob} to the restriction of $f$ to $\Omega^n$ and using the definition of the rates \eqref{def: loc moves loc rates} yields for all $n\geq\bar{n}$
\bes
\kappa\left[ \sum_{\eta\in\bar\Omega} \phi(f)(\eta)\frac{\inv(\eta)}{\inv(\Omega^n)}-  \phi\Big(\sum_{\eta\in\bar\Omega}f(\eta)\frac{\inv(\eta)}{\inv(\Omega^n)}\Big) \right]\leq \frac12 \sum_{\substack{\eta\in\bar\Omega \\ \sigma\in G }}c(\eta,\sigma)\Phi(f(\eta),f(\sigma\eta))\frac{\inv(\eta)}{\inv(\Omega^n)}.
\ees
Letting $n\rightarrow+\infty$ and recalling that $\nabla_{\sigma } (\phi'\circ f) \nabla_{\sigma}f(\eta)=\Phi(f(\eta),f(\sigma\eta))$ gives \eqref{eq: convex sob rev}. A standard approximation argument using compactly supported functions and monotone convergence allows to extend the result to a non compactly supported $f>0$.
 \end{proof}

In the remainder of this article, we use Proposition \ref{cor: sufficient cond} and Corollary \ref{cor: countable state space} to obtain lower bounds for the best constant in \eqref{eq: convex sob rev} in various concrete examples.

\section{Interacting random walks}\label{sec: interacting random walks}
 A cornerstone result of Bakry \'Emery thoery asserts that if we multiply the standard Gaussian distribution on $\RD$ by a log-concave density, then the resulting probability measure $\inv$ satisfies the logarithmic Sobolev inequality

\be\label{eq: langevin log sob}
\kappa \cH(f|\inv)\leq \int |\nabla \log f|^2(x) \inv(\De x) 
\ee
 with a constant $\kappa$ that is at least as large as the optimal constant for the Gaussian distribution. This result is a \emph{geometric} and non pertutbative sufficient condition implying the logarithmic Sobolev inequality. Indeed, log-concave probability measures are not necessarily close to product measures. On the lattice $\N^d$ the fundamental role played the Gaussian distribution on $\RD$ is taken up by the (multidimensional) Poisson distribution $\bm\mu_{\lambda}\in\cP(\N^d)$:
 
\bes
\bm\mu_{\lambda}(\eta_1,\ldots,\eta_d) = \prod_{i=1}^d\exp(\lambda)\frac{\lambda^{-\eta_i}}{\eta_i!}
\ees
where $1/\lambda>0$ is the intensity parameter. In analogy with Bakry \'Emery theory, one is lead to consider the following problem
\bei
\item Find \emph{non perturbative} conditions on $V:\N^d\rightarrow \R$  implying that $\inv=\exp(-V)\bm\mu_{\lambda}$ satisfies the convex Sobolev inequality with a positive constant.
\eei
In the above, by non perturbative we mean that the sought conditions do not necessarily imply that $V$ has to be small.
In the language of statistical mechanics, this means that we try to go beyond the high temperature/weak interaction regime. Of course, in order to give a meaning to the inequality \eqref{eq: convex sob rev} we need to first choose a generator $\cL$ for which $\inv$ is the reversible measure. Following the classical choice made in \cite{caputo2009convex},\cite{dai2013entropy}\cite{fathi2016entropic}, we begin by setting $\Omega=\N^d$,$G=\{\gamma^{\pm}_i: i=1,\ldots,d\}$ and recalling the standard notation  $(\mathbf{e}_i)_{i=1,\ldots,d}$ for the canonical basis of $\N^d$. Next, we define the formal generator $\cL$ as

\be\label{def: gen int rw}
\cL f (\eta) = \sum_{i=1}^d c(\eta,\gamma^{+}_i) \nabla_{i}^+ f(\eta) +c(\eta,\gamma^{-}_i)\nabla_{i}^- f(\eta),
\ee
where for all $\eta\in \N^d$ and $1\leq i\leq d$
\bes
\gamma^{+}_i\eta=\eta+ \mathbf{e}_i,\quad \gamma_i^-\eta= \eta-\mathbf{e}_i\mathbf{1}_{\eta_i>0}, \quad \nabla_{i}^{\pm}f(\eta)=\nabla_{\gamma^{\pm}_i}f(\eta),
\ees
and 
\be\label{rates: int random walks}
c(\eta,\gamma^+_i) = \exp(-\nabla_{i}^+V(\eta)), \quad c(\eta,\gamma^{-}_i) = \lambda\eta_i.
\ee
We shall give in the sequel some natural conditions on $V$ ensuring that \ref{H0} and \ref{H1} hold.

\paragraph{Contraction of the Wasserstein distance and functional inequalities}
Another fundamental result for diffusions on manifolds is that Wasserstein contraction at rate $\kappa$ implies the logarithmic Sobolev inequality (and others) with the same constant. To illustrate this, consider $V:\RD\rightarrow\R$ and denote by $S_t(x)=\bbE[f(X^x_t)]$ the semigroup generated by the Kolmogorov diffusion $(X^x_t)_{t\geq0}$ 

\be\label{Langevin}
\De X^x_t = -\nabla V(X^x_t)\De t + \sqrt{2}\De B_t, \quad X^x_0 \sim x.
\ee

We say that the $p$-Wasserstein distance $W_{p}(\cdot,\cdot)$ contracts at rate $\kappa$ if for all $\mu,\nu\in\cP(\RD)$ with finite $p$-th moment and all $t>0$ we have

\be\label{Wass contr}
W_p(\mu_t,\nu_t) \leq \exp(-\kappa t) W_p(\mu,\nu)
\ee

where $\mu_t =\mu S_t$ and $\nu_t=\nu S_t$ are the laws of the Kolmogorov diffusion at time $t$ started at $\mu$ and $\nu$ respectively. For diffusion processes on manifolds of the form \eqref{Langevin} it is known that for any fixed $p\geq 1$ the contraction estimate \eqref{Wass contr} is equivalent to the Bakry \'Emery condition \eqref{bakem gamma 2} and therefore implies the logarithmic Sobolev inequality \eqref{log sob}, see \cite{von2005transport} for instance. In particular, when the underlying manifold is $\RD$, Wasserstein contraction is equivalent to $\kappa$-convexity of $V$. 
Going back to the lattice $\N^d$, the following question arises naturally:
\bei 
\item Is there a relation between the contraction properties of $\cL$ as defined in \eqref{rates: int random walks} and the best constant in the convex Sobolev inequality \eqref{eq: convex sob rev}?
\eei

The notion of coarse Ricci curvature \cite{Olli} is based on the contraction of the $W_1$ distance. However, this notion is not known to imply MLSI. In the next two subsections we bring some answers to the questions raised here.

\subsection{A sufficient non perturbative condition}

The main result of this section is Theorem \ref{thm: LSI int random walks}, which contains a sufficient condition for \eqref{eq: convex sob rev} to hold. We begin by stating its main assumptions that are general enough to include \eqref{rates: int random walks} as a particular case. For two given potentials $V^{-},V^+:\N^d\rightarrow\R$ we construct the formal generator $\cL$  by

\be\label{def: gen int rw double pot}
\cL f (\eta) = \sum_{i=1}^d c(\eta,\gamma^{+}_i) \nabla_{i}^+ f(\eta) +c(\eta,\gamma^{-}_i)\nabla_{i}^- f(\eta),
\ee
where for all $\eta\in \N^d$ and $1\leq i\leq d$
\be\label{rates: int random walks double pot}
c(\eta,\gamma^+_i) = \exp(-\nabla_{i}^+V^+(\eta)), \quad c(\eta,\gamma^{-}_i) =  \exp(-\nabla_{i}^-V^-(\eta)).
\ee
When $V^+=V$ and $V^-(\eta)= \sum_{i=1}^d\log(\lambda) \eta_i+ \log (\eta_i!)$ we recover \eqref{rates: int random walks}.
We make the following hypothesis, see figure \ref{figure:s kappa condition} for an explanation.

\begin{itemize}
  \item[\namedlabel{H3.3}{\textbf{(H3.3)}}]  For all $\eta\in\Omega,1\leq i\leq d$ we have $\kappa^+(\eta,i)\geq 0$ and $\kappa^-(\eta,i)\geq 0$, where
  \be\label{def: kappa+}
  \kappa^+(\eta,i)= -\nabla_{i}^+c(\eta,\gamma^+_i) -\sum_{\substack{\bgamma \in G\\\bgamma\neq \gamma^+_{i},\gamma^-_i}}  \max\{\nabla_{i}^+ c(\eta,\bgamma),0 \},
  \ee
  
  and 
  \be\label{def: kappa-}
  \kappa^-(\eta,i)=\nabla_i^+c(\eta,\gamma_i^-)-\sum_{\substack{\gamma \in G\\\gamma\neq \gamma^+_{i},\gamma^-_i}}  \max\{-\nabla_{i}^+ c(\eta,\gamma),0 \}.
  \ee
  
\end{itemize}

\begin{theorem}\label{thm: LSI int random walks} Let $V^-,V^+:\N^d\rightarrow\R$ be given and the generator $\cL$ be defined by \eqref{def: gen int rw double pot},\eqref{rates: int random walks double pot}. Moreover, assume that \ref{H3.3} holds. Then \ref{H0},\ref{H1} hold with $\inv=\frac{1}{Z}\exp(-V^+-V^-)$, where $Z$ is the normalization constant. If we define

\be\label{kappa log concave condition}
\kappa= \inf_{\substack{\eta\in\N^d \\ 1\leq i\leq d}}   \kappa^+(\eta,i)+ \kappa^-(\eta,i)
\ee

then the following holds

 \item[(i)] For any $\phi$ satisfying \ref{H2} the convex Sobolev inequality \eqref{eq: convex sob rev} holds with with $\kappa_{\phi}=\kappa$. In particular MLSI holds with $\kappa_1=\kappa$. 
 
 \item[(ii)] For $\alpha \in (1,2]$, the Beckner inequality \eqref{beckner ineq} holds with $\kappa_{\alpha}=\alpha\kappa$.
\end{theorem}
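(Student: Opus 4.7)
The plan is to apply Proposition \ref{cor: sufficient cond}, via Corollary \ref{cor: countable state space} for the passage to the countable state space, by exhibiting coupling rates that satisfy the gradient estimate \eqref{eq: discrete gradient estimate} with $\kappa'=\kappa$ and the diagonal-rate estimate \eqref{def: kappa'''} with $\kappa'''=\kappa$. Part (i) of the theorem (including MLSI via $\phi_1$, admissible by Lemma \ref{lem: reny admissible}) will then follow from Proposition \ref{cor: sufficient cond}(i), and part (ii) from Proposition \ref{cor: sufficient cond}(iii), giving the Beckner constant $\kappa'+(\alpha-1)\kappa'''=\alpha\kappa$. The hypotheses \ref{H0}, \ref{H1} are verified by direct calculation: detailed balance against $\inv=\exp(-V^+-V^-)/Z$ is a one-line check from \eqref{rates: int random walks double pot}, with involution $(\gamma_i^\pm)^{-1}=\gamma_i^\mp$; irreducibility is immediate; and the summability of the total jump rate is automatic on each of the localising finite sets $\Omega^n=\{\eta\in\N^d:\sum_j\eta_j\leq n\}$ used in Corollary \ref{cor: countable state space}.

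For the coupling of the pair $(\eta,\eta+\mathbf{e}_i)$ I would combine basic (synchronised) coupling with excess transfers channelled through the $\gamma_i^{\pm}$ directions: for every $\bgamma\in G$ set $\couprate(\eta,\eta+\mathbf{e}_i,\bgamma,\bgamma)=\min\{c(\eta,\bgamma),c(\eta+\mathbf{e}_i,\bgamma)\}$; for each $\bgamma\notin\{\gamma_i^+,\gamma_i^-\}$ absorb the marginal-$2$ excess $(\nabla_i^+c(\eta,\bgamma))^+$ into $\couprate(\eta,\eta+\mathbf{e}_i,\gamma_i^+,\bgamma)$ and the marginal-$1$ excess $(-\nabla_i^+c(\eta,\bgamma))^+$ into $\couprate(\eta,\eta+\mathbf{e}_i,\bgamma,\gamma_i^-)$; and send what is left on the $i$-axis to the diagonal-hitting null transitions
\[
\couprate(\eta,\eta+\mathbf{e}_i,\gamma_i^+,\zero)=\kappa^+(\eta,i), \qquad \couprate(\eta,\eta+\mathbf{e}_i,\zero,\gamma_i^-)=\kappa^-(\eta,i),
\]
both nonnegative by \ref{H3.3}. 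The elementary identity $x-(x)^+ +(-x)^+=0$ makes the marginal constraints \eqref{eq:couprates} balance, and the case $\sigma=\gamma_i^-$ is handled symmetrically. Since the only coupled moves mapping $(\eta,\eta+\mathbf{e}_i)$ into the diagonal are the two null transitions above, \eqref{def: kappa'''} is immediate with $\kappa'''=\kappa^+(\eta,i)+\kappa^-(\eta,i)\geq\kappa$.

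The heart of the proof is the gradient estimate. Using \ref{H1} together with $\Phi(a,b)=\Phi(b,a)$, both sides of \eqref{eq: discrete gradient estimate} can be rewritten as single sums over directed edges with weights $m_i(\eta):=c(\eta,\gamma_i^+)\inv(\eta)$: one has $\cE(\phi'(f),f)=\sum_{i,\eta}m_i(\eta)\,\psi(\eta,i)$ with $\psi(\eta,i):=\Phi(f(\eta),f(\eta+\mathbf{e}_i))$, and the LHS of \eqref{eq: discrete gradient estimate} becomes $\sum_{i,\eta}m_i(\eta)\,\cop f^\phi(\eta,\eta+\mathbf{e}_i)$. Expanding $\cop f^\phi(\eta,\eta+\mathbf{e}_i)$ according to the coupling produces three families of terms: (a) the diagonal null moves, contributing $-(\kappa^+(\eta,i)+\kappa^-(\eta,i))\psi(\eta,i)$ at the current edge; (b) the synchronised $(\bgamma,\bgamma)$ moves, each producing a shifted $\psi(\bgamma\eta,i)$ at an edge parallel to $i$; and (c) the excess $(\gamma_i^+,\bgamma)$ and $(\bgamma,\gamma_i^-)$ moves, which produce $\psi$-values at edges in the orthogonal directions, since $(\bgamma\eta,\eta)$ and $(\eta+\mathbf{e}_i,\bgamma(\eta+\mathbf{e}_i))$ are again single-coordinate-shift pairs. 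The key step is that each shift of type (b) or (c), once weighted by $m_i(\eta)$ and summed, can be re-indexed via the reversibility identity $c(\eta,\bgamma)\inv(\eta)=c(\bgamma\eta,\bgamma^{-1})\inv(\bgamma\eta)$ to match an opposite contribution appearing in $\cop f^\phi$ at a neighbouring edge; these flow terms cancel pairwise, leaving only the contribution (a). This yields $\sum_{i,\eta}m_i(\eta)\,\cop f^\phi(\eta,\eta+\mathbf{e}_i)\leq-\kappa\,\cE(\phi'(f),f)$, which is \eqref{eq: discrete gradient estimate} with $\kappa'=\kappa$. Corollary \ref{cor: countable state space} then lifts the localised inequalities from $\Omega^n$ to $\N^d$, the truncated rates inheriting \ref{H3.3} with at least the same $\kappa$.

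The main obstacle is the cancellation argument in the gradient-estimate step: identifying the correct pairing between a coupled move at edge $(\eta,\eta+\mathbf{e}_i)$ and its mirror image at a neighbouring edge, and verifying that the prescribed excess rates $(\pm\nabla_i^+c(\eta,\bgamma))^+$ are precisely those for which the flows close up. It is here that \ref{H3.3} plays its essential role: the inequalities $\kappa^\pm(\eta,i)\geq0$ are not merely sign assumptions but quantify exactly the slack between the diagonal-driving null rate on the $i$-axis and the off-axis excesses that have to be re-routed through it, so that after cancellation only the non-positive diagonal-null contribution survives.
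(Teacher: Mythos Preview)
Your strategy and your choice of coupling rates coincide with the paper's, and the identification of $\kappa'''=\kappa$ is correct. The gap is precisely where you locate it yourself: the cancellation of the flow terms (b) and (c). The mechanism you propose---re-indexing via detailed balance $c(\eta,\bgamma)\inv(\eta)=c(\bgamma\eta,\bgamma^{-1})\inv(\bgamma\eta)$---is not sufficient on its own to make these terms close up.

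The paper decomposes the left side of \eqref{eq: discrete gradient estimate} as $A+B+C+D$ (with $D$ your term (a)), then splits $A=A.1+A.2+A.3$, $B=B.1+B.2$, $C=C.1+C.2$ and shows $A.1=0$, $B.1+A.2=0$, $C.1+A.3=0$, $B.2=C.2=0$. Each of these sub-cancellations, after an application of \eqref{eq rev}, is finished by symmetrising in the labels $i\leftrightarrow j$ and invoking the identities
\[
c(\eta,\gamma_i^+)\,c(\gamma_i^+\eta,\gamma_j^+)=c(\eta,\gamma_j^+)\,c(\gamma_j^+\eta,\gamma_i^+),\qquad
c(\eta,\gamma_i^-)\,\nabla_i^-c(\eta,\gamma_j^-)=c(\eta,\gamma_j^-)\,\nabla_j^-c(\eta,\gamma_i^-),
\]
which hold because the rates \eqref{rates: int random walks double pot} come from two \emph{separate} potentials $V^+,V^-$. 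These are stronger than the Kolmogorov cycle condition implied by reversibility (which only gives the product of the two identities, not each one separately), and they are what makes the pairing work. Note also that the cancellation is not (b)-with-(b) and (c)-with-(c): the correction pieces $A.2,A.3$ of the synchronised family cancel against pieces $B.1,C.1$ of the excess family, while the remaining excess pieces $B.2,C.2$ vanish by themselves only after the label exchange plus the identities above. Your sketch does not name either the symmetrisation step or the potential-form identities, and without them the argument does not close.
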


To prove this Theorem, we show \eqref{eq: discrete gradient estimate} and \eqref{def: kappa'''} with $\kappa'=\kappa'''=\kappa$. In order to do so, we shall construct appropriate coupling rates: we refer to \eqref{figure: coupling support} to illustrate some properties of these rates.

\paragraph{Comparison with existing literature}
Concerning MLSI and the spectral gap, the results of Theorem \ref{thm: LSI int random walks} are well known for $d=1$. Comparable results can be found e.g. in \cite{fathi2016entropic,mielke2013geodesic,caputo2009convex,boudou2006spectral}. For Beckner inequalities and $d=1$, we refer to \cite{jungel2017discrete}. When $d>1$ much less appear to be known. For MLSI and Poincaré inequalities perturbative sufficient conditions are given in \cite{boudou2006spectral,dai2013entropy}. 
In \cite{dai2013entropy}, a non perturbative two dimensional example is also treated. Erbar et al. gave in \cite[Thm 3.9]{erbar2017ricci} a general abstract sufficient condition implying positive lower bounds for the entropic Ricci curvature \cite{erbar2012ricci,maas2011gradient}. It can be checked that this criterion, when applied to the setting \eqref{rates: int random walks double pot} provides a bound bounds for $\kappa_1$ that is worse or the same as what Theorem \ref{thm: LSI int random walks} gives, and in some cases it may give no positive lower bounds. However, although the authors only apply their result in the weak interaction/high temperature regime, it seems that its validity extends to the non perturbative setup. In particular, it could be used to provide lower bounds for some of the examples we are going to present at section \ref{sec: int rw examples}. Since the hypothesis of \cite[Thm 3.9]{erbar2017ricci} require some work to be checked and it is not obvious how to come up with a $\cL$ satisfying them, it could be interesting to try to use the probabilistic intuition behind coupling rates to construct candidate $\cL$ to which this criterion applies. For $d>2$ the results of Theorem \ref{thm: LSI int random walks} about Beckner's inequalities and general convex Sobolev inequalities seem to be new.
\begin{figure}[ht]
  \centering
  \begin{tikzpicture}
   \coordinate (Origin)   at (0,0);

    \clip (-3,-0.2) rectangle (7cm,7.5cm); 
    \pgftransformcm{1.8}{0}{0}{1.8}{\pgfpoint{0cm}{0cm}}
    \coordinate (eta) at (0,2);
    \coordinate (breta) at (2,2);
    \draw[style=help lines,dashed] (-14,-14) grid[step=2cm] (14,14);
    \foreach \x in {-7,-6,...,7}{
      \foreach \y in {-7,-6,...,7}{
        \node[draw,circle,inner sep=2pt,fill] at (2*\x,2*\y) {};
      }
    }
    \node at (-0.12,1.8) {$\eta$};
    \node at (1.75,1.8) {$\gamma_i^+\eta$};
    \draw [ultra thick, black] (0,2)
        -- (0.6,2) node [below] {};
    \draw [ultra thick,-latex,green] (0.6,2)
        -- (1.4,2) node [below left] {$-\nabla_i^+c(\eta,\gamma_i^+)$};
     \draw [ultra thick,-latex, black] (0,2)
        -- (0,2.4) node [below] {};
     \draw [ultra thick, black] (2,2)
        -- (2,2.4) node [below] {};
    \draw [ultra thick,-latex,red] (2,2.4)
        -- (2,2.75) node [right] {$\max\{\nabla_i^+c(\eta,\gamma_j^+),0\}$};  
    \draw [ultra thick,-latex,black] (2,2)
        -- (2.6,2) node [right] {};  
    \draw [ultra thick, black] (2,2)
        -- (2,1.3) node [below] {};
    \draw [ultra thick,-latex,red] (2,1.3)
        -- (2,1.01) node [right] {$\max\{\nabla_i^+c(\eta,\gamma_j^-),0\}$};  
     \draw [ultra thick, -latex, black] (0,2)
        -- (0,1.3) node [below] {};
  \end{tikzpicture}
  \caption{The condition $\kappa^+(\eta,i)\geq0$ imposes that the length of the green arrow is at least as much as the total length of the red arrows. The coupling interpretation of this condition is that the random walker starting at $\eta$ can use his/her larger probability to make the $\gamma_i^+$ move in order to run after the walker starting at $\gamma_i^+\eta$ whenever he/she tries to get at distance two from $\eta$ using the moves $\gamma_j^+,\gamma_j^-$. }
  \label{figure:s kappa condition}
\end{figure}
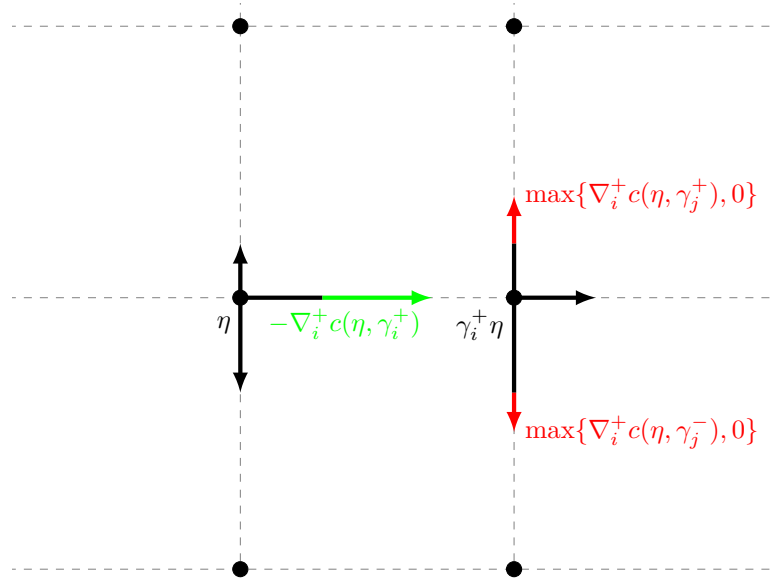
\begin{proof}[Proof of Theorem \ref{thm: LSI int random walks}]
It is straightforward to check that \ref{H3.3} implies \ref{H0} and \ref{H1} with $(\gamma^{\pm}_{i})^{-1}=(\gamma^{\mp}_{i})$ for all $1\leq i \leq d$. The proof strategy consists of concatenating Proposition \ref{cor: sufficient cond} with Corollary \ref{cor: countable state space}. For this reason we define $\Omega^n=\{ \eta\in\N^d: \eta_i\leq n\,\forall i\leq d \}$ and consider the Markov chain on $\Omega^n$ whose generator $\cL^n$ is given by \eqref{def: loc gen} and \eqref{def: loc moves loc rates}, i.e. 
\bes
\cL^n f (\eta) = \sum_{i=1}^d c(\eta,\gamma^{+,n}_i) \nabla_{i}^{+,n} f(\eta) +c(\eta,\gamma^{-,n}_i)\nabla_{i}^{-,n} f(\eta)
\ees
where for all $\eta\in \N^d$ and $i,j\in\{1,\ldots,d\}$
\bes
\gamma^{+,n}_i\eta=\eta+ \mathbf{e}_i\mathbf{1}_{\eta_i<n},\quad \gamma_i^-\eta= \eta-\mathbf{e}_i\mathbf{1}_{\eta_i>0}
\ees
and 
\bes
c(\eta,\gamma^{+,n}_i) = c(\eta,\gamma^+_i), \quad c(\eta,\gamma^{-,n}_i) = c(\eta,\gamma_i^-).
\ees

For the sake of simplicity, and since there is no ambiguity, we will keep writing $\gamma^{\pm}_i$ instead of $\gamma^{\pm,n}_i$ and adopt the same convention for discrete gradients. Likewise, we shall write $c^n(\eta,\gamma^{\pm}_i)$ instead of $c(\eta,\gamma^{\pm,n}_i)$. Remark that under the current hypothesis

\[ \cS= \{ (\eta,\gamma_i^+): \eta \in\Omega^{n}, 1\leq i \leq d\} \cup \{ (\eta,\gamma_i^-): \eta\in \Omega^{n}, \eta_i>0,\, 1\leq i \leq d\}.\]

For $(\eta,\gamma_{i}^+)\in\cS$ we define (see also figure \ref{figure: coupling support})

\be\label{def: coup rates int rw}
\couprate(\eta,\gamma^+_i\eta,\gamma,\bgamma) =
\begin{cases}
\min\{ c^n(\eta,\gamma),c^n(\gamma^+_i\eta,\gamma)\}, & \quad \mbox{if $\gamma=\bgamma\in G$,}\\
\max\{\nabla_{i}^+c^n(\eta,\bgamma),0\}, & \quad \mbox{if $\gamma=\gamma_i^+$ and $\bgamma\in G$, $\bgamma\neq\gamma_i^+,\gamma_i^-$,}\\
\max\{-\nabla_{i}^+c^n(\eta,\gamma),0\}, & \quad \mbox{if $\gamma\in G$,$\gamma\neq\gamma_i^+,\gamma_i^-$ and $\bgamma=\gamma_i^-$,}\\
\kappa^{+}(\eta,i),& \quad \mbox{if $\gamma=\gamma_i^+,\bgamma=\zero$,}\\
\kappa^{-}(\eta,i),& \quad \mbox{if $\gamma=\zero,\bgamma=\gamma^{-}_i$,}\\
0,& \quad \mbox{otherwise.}\\
\end{cases}
\ee
 Since $(\eta,\gamma^-_i)\in\cS\Rightarrow(\gamma_i^-\eta,\gamma^+_i)\in\cS$ we can also define
 \be\label{def: coup rates int rw 2} \couprate(\eta,\gamma^-_{i}\eta,\gamma,\bgamma)=\couprate(\gamma^-_{i}\eta,\gamma^{+}_i(\gamma^-_{i}\eta),\bgamma,\gamma) ,\ee
uniformly on $\gamma,\bgamma\in G^*$. A direct calculation  using \ref{H3.3} shows that \eqref{def: coup rates int rw} and \eqref{def: coup rates int rw 2} define admissible coupling rates. We now move on to prove that \eqref{eq: discrete gradient estimate} holds with $\kappa'=\kappa$. We have to show that for all $f>0$

\begin{align}\label{eq: G+G-}
\nonumber \frac12\sum_{(\eta,\gamma_i^+)\in\supp}\sum_{(\gamma,\bgamma)\in G^*}c^n(\eta,\gamma_i^+)\couprate(\eta,\gamma_i^+\eta,\gamma,\bgamma) \coupgrad_{\gamma,\bgamma} \fphi (\eta,\gamma_i^+\eta)\, \inv^n(\eta) \\
+\frac12\sum_{(\eta,\gamma_i^-)\in\supp}\sum_{(\gamma,\bgamma)\in G^*}c^n(\eta,\gamma_i^-)\couprate(\gamma_i^-\eta,\eta,\bgamma,\gamma) \coupgrad_{\gamma,\bgamma} \fphi (\eta,\gamma_i^-\eta)\, \inv^n(\eta)
 \end{align}
is bounded above by $-\kappa \cE(\phi'(f),f)$. Using \eqref{eq rev} and the fact that $\fphi$ is symmetric in its arguments, we get that the second summand equals the first one in the above expression, which can then be rewritten as $A+B+C+D$ with 

\begin{align*}
  A&=\sum_{\substack{\eta\in\Omega^n,  i\leq d\\ \gamma\in G}}c^n(\eta,\gamma_i^+)\min\{ c^n(\eta,\gamma),c^n(\gamma^+_i\eta,\gamma)\}\coupgrad_{\gamma,\gamma}\fphi(\eta,\gamma_i^+\eta)\, \inv^n(\eta), \\
  B&=\sum_{\substack{\eta\in\Omega^n \\ i\leq d}}\sum_{\substack{ \gamma\in G \\\gamma\neq \gamma_i^+,\gamma_i^-}}c^n(\eta,\gamma_i^+)\max\{\nabla_{i}^+c^n(\eta,\gamma),0\} \coupgrad_{\gamma^+_i,\gamma}\fphi(\eta,\gamma_i^+\eta)\, \inv^n(\eta),\\
   C&=\sum_{\substack{\eta\in\Omega^n \\ i\leq d}}\sum_{\substack{ \gamma\in G \\\gamma\neq \gamma_i^+,\gamma_i^-}}c^n(\eta,\gamma_i^+)\max\{-\nabla_{i}^+c^n(\eta,\gamma),0\} \coupgrad_{\gamma,\gamma_i^-}\fphi(\eta,\gamma_i^+\eta)\, \inv^n(\eta),\\
   D&=\sum_{\substack{\eta\in\Omega^n \\ i\leq d}}c^n(\eta,\gamma_i^+)[\kappa^+(\eta,i) \coupgrad_{\gamma_i^+,\zero}\fphi(\eta,\gamma_i^+\eta)+ \kappa^-(\eta,i) \coupgrad_{\zero,\gamma_i^-}\fphi(\eta,\gamma_i^+\eta) ]\, \inv^n(\eta)
\end{align*}
We first rewrite the term $A$ as $A.1+A.2+A.3$, where 

\begin{align*}
A.1 &=\sum_{\substack{\eta\in\Omega^n \\  i,j\leq d}}c^n(\eta,\gamma_i^+)c^n(\gamma^+_i\eta,\gamma^+_j)\coupgrad_{\gamma^+_j,\gamma^+_j}\fphi(\eta,\gamma_i^+\eta)\, \inv^n(\eta)\\
&+\sum_{\substack{\eta\in\Omega^n \\  i,j\leq d}} c^n(\eta,\gamma_i^+)c^n(\eta,\gamma^-_j)\coupgrad_{\gamma_j^-,\gamma_j^-}\fphi(\eta,\gamma_i^+\eta)\, \inv^n(\eta).
\end{align*}

\begin{align*}
    A.2 &=-\sum_{\substack{\eta\in\Omega^n \\  i,j\leq d}}c^n(\eta,\gamma_i^+)\max\{\nabla_{i}^+c^n(\eta,\gamma^+_j),0\}\coupgrad_{\gamma^+_j,\gamma^+_j}\fphi(\eta,\gamma_i^+\eta)\, \inv^n(\eta),\\
    A.3&=-\sum_{\substack{\eta\in\Omega^n \\  i,j\leq d}} c^n(\eta,\gamma_i^+)\max\{-\nabla_i^+c^n(\eta,\gamma^-_j),0\}\coupgrad_{\gamma_j^-,\gamma_j^-}\fphi(\eta,\gamma_i^+\eta)\, \inv^n(\eta).
\end{align*}

We now consider the terms $B$ and $C$ and observe that $B=B.1+B.2$, $C=C.1+C.2$ with
\begin{align*}
  B.1&=  \sum_{\substack{\eta\in\Omega^n \\ i\leq d}}\sum_{\substack{j\leq d \\j \neq i}}c^n(\eta,\gamma_i^+)\max\{\nabla_{i}^+c^n(\eta,\gamma_j^+),0\} \coupgrad_{\gamma^+_i,\gamma_j^+}\fphi(\eta,\gamma_i^+\eta)\, \inv^n(\eta),\\
  B.2&=  \sum_{\substack{\eta\in\Omega^n \\ i\leq d}}\sum_{\substack{j\leq d \\j \neq i}}c^n(\eta,\gamma_i^+)\max\{\nabla_{i}^+c^n(\eta,\gamma_j^-),0\} \coupgrad_{\gamma^+_i,\gamma_j^-}\fphi(\eta,\gamma_i^+\eta)\, \inv^n(\eta).
\end{align*}
and
\begin{align*}
   C.1&= \sum_{\substack{\eta\in\Omega^n \\ i\leq d}}\sum_{\substack{j\leq d \\j \neq i}}c^n(\eta,\gamma_i^+)\max\{-\nabla_{i}^+c^n(\eta,\gamma_j^-),0\} \coupgrad_{\gamma_j^-,\gamma_i^-}\fphi(\eta,\gamma_i^+\eta)\, \inv^n(\eta),\\
  C.2&=  \sum_{\substack{\eta\in\Omega^n \\ i\leq d}}\sum_{\substack{j\leq d \\j \neq i}}c^n(\eta,\gamma_i^+)\max\{-\nabla_{i}^+c^n(\eta,\gamma^+_j),0\} \coupgrad_{\gamma_j^+,\gamma_i^-}\fphi(\eta,\gamma_i^+\eta)\, \inv^n(\eta).\\
\end{align*}

We claim that $A.1=0$, $C.1+A.3=B.1+A.2=0$, $C.2=B.2=0$. We break the proof of these identities into three different steps. A fourth step concludes the proof of Theorem \ref{thm: LSI int random walks}. 
\bei
\item\underline{\emph{Step 1: \,$A.1=0.$}}
This is done using reversibility \eqref{eq rev} on the second summand of $A.1$ with
\[ F(\eta,\sigma)= \mathbf{1}_{ G^-}(\sigma)\sum_{\substack{\eta\in\Omega^n \\  i\leq d}}c^n(\eta,\gamma_i^+)\coupgrad_{\sigma,\sigma}\fphi(\eta,\gamma_i^+\eta), \quad G^-=\{\gamma_j^-:1\leq j \leq d \}. \]
and observing that that \eqref{rates: int random walks} implies $c^n(\eta,\gamma_j^+)c^n(\gamma_j^+\eta,\gamma_i^+)=c^n(\eta,\gamma_i^+)c^n(\gamma_i^+\eta,\gamma_j^+)$ for all $\eta,j,i$.
\item\underline{\emph{Step 2: \,$C.2=B.2=0.$}}
Using reversibility \eqref{eq rev} with
\bes
F(\eta,\sigma)= \mathbf{1}_{ G^+}(\sigma)\sum_{\substack{\eta\in\Omega^n \\  \gamma\in G^-,\gamma\neq \sigma,\sigma^{-1}}}\max\{\nabla_{\sigma} c^n(\eta,\gamma),0\}\coupgrad_{\sigma,\gamma}\fphi(\eta,\sigma\eta), \quad G^+=\{\gamma_i^+:1\leq i \leq d \}
\ees
and the symmetry of $f$ we obtain
\bes
B.2=\sum_{\substack{\eta\in\Omega^n \\ i\leq d}}\sum_{\substack{j\leq d \\j \neq i}}c^n(\eta,\gamma_i^-)\max\{-\nabla_{i}^-c^n(\eta,\gamma_j^-),0\} [f(\eta,\gamma_j^-\eta)-f(\eta,\gamma_i^-\eta)]\, \inv^n(\eta).
\ees
The desired conclusion is obtained rewriting the above expression in a symmetric way by exchanging the lables $i$ and $j$ and observing that \eqref{rates: int random walks double pot} implies 
\be\label{eq: potential commutation}
c^n(\eta,\gamma_i^-)\nabla_i^-c^n(\eta,\gamma^-_j)=c^n(\eta,\gamma_j^-)\nabla_j^-c^n(\eta,\gamma^-_i).
\ee
The proof that $C.2=0$ is almost identical but simpler as we do not need to invoke \eqref{eq rev}. Indeed, exchanging the labels $i$ and $j$ we arrive at
\bes
C.2=\sum_{\substack{\eta\in\Omega^n \\ i\leq d}}\sum_{\substack{j\leq d \\j \neq i}}c^n(\eta,\gamma_i^+)\max\{-\nabla_{i}^+c^n(\eta,\gamma^+_j),0\} [f(\eta,\gamma_j^+\eta)-f(\eta,\gamma_i^+\eta)]\, \inv^n(\eta).
\ees
To conclude, we observe that \eqref{rates: int random walks double pot} implies
\be\label{eq: potential commutation +}
c^n(\eta,\gamma_i^+)\nabla_i^+c^n(\eta,\gamma^+_j)=c^n(\eta,\gamma_j^+)\nabla_j^+c^n(\eta,\gamma^+_i).
\ee

\item\underline{\emph{Step 3:\, $C.1+A.3=B.1+A.2=0$}}
We observe that, since \ref{H3.3} implies $\nabla_{i}^+c(\eta,\gamma_i^-)\geq 0$: 
\begin{align*}
C.1+A.3= \sum_{\substack{\eta\in\Omega^n \\  i,j\leq d}} c^n(\eta,\gamma_i^+)\max\{-\nabla_i^+c^n(\eta,\gamma^-_j),0\}\,[f(\eta,\gamma_j^-\eta)- f(\gamma_j^-\eta,\gamma_j^-\gamma^+_{i}\eta) ] \inv^n(\eta),
\end{align*}
Using reversibility \eqref{eq rev} and the symmetry of $\fphi$ we obtain the equivalent expression
\bes
\sum_{\substack{\eta\in\Omega^n \\  i,j\leq d}} c^n(\eta,\gamma_i^-)\max\{\nabla_i^-c^n(\eta,\gamma^-_j),0\}\,[f(\gamma_i^-\eta,\gamma_j^-\gamma_i^-\eta)- f(\gamma_j^-\eta,\gamma_j^-\gamma_i^-\eta) ] \inv^n(\eta).
\ees

Arguing as in \emph{Step 2}, i.e. rewriting the above expression in a symmetric way exchanging the labels $j$ and $i$ and using \eqref{eq: potential commutation} we get the desired result. As in the previous step, the proof that $B.1+A.2=0$ is almost identical, the only differences being that we do not need to invoke\eqref{eq rev} and \eqref{eq: potential commutation} is replaced by \eqref{eq: potential commutation +}.

\item\underline{\emph{Step 4:\, conclusion}}
Combining \ref{H3.3} with the fact that $\Phi$ vanishes on the diagonal we obtain
\bes
D \leq - \kappa\sum_{\substack{\eta\in\Omega^n \\  i\leq d}} c^n(\eta,\gamma^{+}_i) \Phi(f(\eta),f(\gamma^{+}_i\eta)) \inv^n(\eta).
\ees
Using the reversibility \eqref{eq rev} one last time, it follows that the right hand side of the last expression equals $-\kappa\cE(\phi'(f),f)$. Combining this with $A+B+C=0$, which we proved in the former steps, we conclude that \eqref{eq: discrete gradient estimate} holds with $\kappa'=\kappa$. From the construction of the coupling rates we also have that \eqref{def: kappa'''} holds with $\kappa'''=\kappa$. Applying Proposition \ref{cor: sufficient cond} for any $n$ and eventually using Corollary \ref{cor: countable state space} concludes the proof.
\eei
\end{proof}

\begin{figure}[ht]
  \centering
  \begin{tikzpicture}
   \coordinate (Origin)   at (0,0);

    \clip (-3,-2) rectangle (10cm,10cm); 
    \pgftransformcm{1.8}{0}{0}{1.8}{\pgfpoint{0cm}{0cm}}
    \coordinate (eta) at (0,2);
    \coordinate (breta) at (2,2);
    \draw[style=help lines,dashed] (-14,-14) grid[step=2cm] (14,14);
    \foreach \x in {-7,-6,...,7}{
      \foreach \y in {-7,-6,...,7}{
        \node[draw,circle,inner sep=2pt,fill] at (2*\x,2*\y) {};
      }
    }
    \node at (-0.18,1.9) {$\eta$};
    \node at (2.1,1.8) {$\gamma_i^+\eta$};
    \node at (3.25,3) [blue]{$\bm{c}^{\mathrm{cpl}}(\eta,\gamma_i^+\eta,\gamma_i^+,\bgamma)$};
    \node at (0.55,3) [ orange]{$\bm{c}^{\mathrm{cpl}}(\eta,\gamma_i^+\eta,\gamma,\gamma)$};
    \node at (0.75,1) [purple]{$\bm{c}^{\mathrm{cpl}}(\eta,\gamma_i^+\eta,\gamma,\gamma_i^-)$};


  \node (1) at (0,2) {};
  \node (2) at (2,2)  {};
  \node (3) at (2,4)  {};
  \node (4) at (0,4)  {};
  \node (5) at (0,0)  {};
  \node (6) at (2,0)  {};
  \path[every node/.style={font=\sffamily\small}]
    (1) edge [bend right, blue, ultra thick, -latex] node [right] {} (2)
    (2) edge [blue, ultra thick, -latex, bend right] node [right] {} (3)
    (2) edge [orange, ultra thick, -latex, bend left] node [right] {} (3)
    (1) edge[orange, ultra thick, -latex, bend left] node [left] {} (4)
    (1) edge [purple,ultra thick, -latex, bend right] node [right] {} (5)
    (2) edge [purple, ultra thick, -latex, bend right] node [left] {} (1);
  
  \end{tikzpicture}
  \caption[fuffi]{The rates defined at \eqref{def: coup rates int rw} couple the dynamics of two random walkers $X^\eta_t,X^{\gamma_i^+\eta}_t$ starting at $\eta,\gamma_i^+\eta$ respectively in such a way that, on a short time interval $[0,\varepsilon]$ only one of the following movements can be observed:\\ \newline
  
  $\bullet$ The two walkers meet at $\eta$ (resp. $\gamma_i^+\eta$) with probability $\varepsilon \kappa^-(\eta,i)$ (resp. $\varepsilon\kappa^+(\eta,i)$).\\ \newline
    $\bullet$  The two walkers move in parallel making the same move $\gamma$ with probability $\varepsilon \bm{c}^{\mathrm{cpl}}(\eta,\gamma_i^+\eta,\gamma,\gamma)$. (yellow lines)\\ \newline
$\bullet$ The walker $X^{\gamma_i^+\eta}_t$ runs after the walker $X^{\eta}_t$ with probability  $\varepsilon \bm{c}^{\mathrm{cpl}}(\eta,\gamma_i^+\eta,\gamma,\gamma_i^-)$. (purple lines)\\ \newline
$\bullet$ The walker $X^{\eta}_t$ runs after the walker $X^{\gamma_i^+\eta}_t$ with probability  $\varepsilon \bm{c}^{\mathrm{cpl}}(\eta,\gamma_i^+\eta,\gamma_i^+,\gamma)$. (blue lines)
} 
  \label{figure: coupling support}
\end{figure}
\subsection{Examples}\label{sec: int rw examples}

In this section, we present some examples where explicit lower bounds on $\kappa_{\phi}$ can be obtained thanks to Theorem \ref{thm: LSI int random walks}. In \cite[Sec 3.2]{dai2013entropy}, the authors managed to establish MLSI for a two dimensional non perturbative example corresponding to $V^+(\eta)=h(\eta_1+\eta_2), V^{-}(\eta)=\sum_{j=1}^d\log(\eta_i!)$ with $h$ convex and increasing. There, they raised the question of how to generalize this result to a dimension $d>2$. Thanks to Theorem \ref{thm: LSI int random walks} we can answer this question in the next Corollary. In order to state this result, and in the remainder of this article, for a function $h:\R\rightarrow\R$ we use the notation $\nabla^{+}h(m)$ to indicate the increment $h(m+1)-h(m)$. Throughout this section the potential $V^-$ is fixed to be 

\bes
V^{-}(\eta)=\sum_{i=1}^d\log(\lambda)\eta_i+ \log(\eta_i!)
\ees
and we will abbreviate $V^+$ with $V$. We are therefore in the setting where $\cL$ given by \eqref{def: gen int rw},\eqref{rates: int random walks}.
\begin{cor}\label{cor: symmetric interaction}
Let $|\eta|=\sum_{i=1}^d\eta_i$, $h:\R\rightarrow\R$ be a convex function and set $V(\eta)=\beta h(|\eta|)$. Consider the generator $\cL$ given by \eqref{def: gen int rw}\eqref{rates: int random walks}. If

\be\label{hyp:symmetric interaction}
\inf_{m\in\N} \lambda-(d-1)[\exp(-\beta\nabla^{+}h(m))-\exp(-\beta\nabla^{+}h(m+1))] \geq 0, 
\ee

then the conclusion Theorem \ref{thm: LSI int random walks} holds with 

\be\label{eq: kappa symmetric interaction} 
\kappa=\inf_{m\in\N} \lambda-(d-2)[\exp(-\beta\nabla^{+}h(m))-\exp(-\beta\nabla^{+}h(m+1))].
\ee

In particular, if $h$ is strictly increasing  and 

\be\label{hyp: low temperature}
\beta \geq \frac{\log(d-1)-\log(\lambda)}{h(1)-h(0)},
\ee

then the conclusion of Theorem \ref{thm: LSI int random walks} holds with

\be\label{eq: kappa symmetric interaction explicit}
\kappa = \lambda -(d-2) \exp(-\beta\nabla_+h(0)).
\ee

\end{cor}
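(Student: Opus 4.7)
The plan is to apply Theorem \ref{thm: LSI int random walks} directly by computing $\kappa^+(\eta,i)$ and $\kappa^-(\eta,i)$ explicitly for this choice of potential. With $V(\eta)=\beta h(|\eta|)$, the rates become $c(\eta,\gamma_i^+)=\exp(-\beta\nabla^+h(|\eta|))$ (depending only on $|\eta|$) and $c(\eta,\gamma_i^-)=\lambda\eta_i$. The key observation is that the discrete gradients $\nabla_i^+c(\eta,\gamma_j^+)=\exp(-\beta\nabla^+h(|\eta|+1))-\exp(-\beta\nabla^+h(|\eta|))$ are non-positive for all $i,j$ by convexity of $h$ (which makes $\nabla^+h$ non-decreasing), while $\nabla_i^+c(\eta,\gamma_j^-)=\lambda\delta_{ij}$.

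Plugging these signs into the definitions \eqref{def: kappa+} and \eqref{def: kappa-}, most of the $\max$ terms vanish automatically. A direct bookkeeping yields
\begin{align*}
\kappa^+(\eta,i)&=\exp(-\beta\nabla^+h(|\eta|))-\exp(-\beta\nabla^+h(|\eta|+1))\geq 0,\\
\kappa^-(\eta,i)&=\lambda-(d-1)\bigl[\exp(-\beta\nabla^+h(|\eta|))-\exp(-\beta\nabla^+h(|\eta|+1))\bigr],
\end{align*}
so hypothesis \eqref{hyp:symmetric interaction} is precisely the statement that $\kappa^-(\eta,i)\geq 0$, which together with the automatic non-negativity of $\kappa^+$ verifies \ref{H3.3}. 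Summing and taking the infimum over $\eta$ (only $|\eta|$ matters) produces the expression \eqref{eq: kappa symmetric interaction}, and the first part of the corollary follows from Theorem \ref{thm: LSI int random walks}.

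For the explicit bound \eqref{eq: kappa symmetric interaction explicit} and the sufficient condition \eqref{hyp: low temperature}, the key monotonicity estimate is
\[
\exp(-\beta\nabla^+h(m))-\exp(-\beta\nabla^+h(m+1))\;\leq\;\exp(-\beta\nabla^+h(m))\;\leq\;\exp(-\beta\nabla^+h(0)),
\]
valid for every $m\in\N$ since $\nabla^+h$ is non-decreasing and strictly positive under the strict monotonicity assumption on $h$. Multiplying by $(d-1)$ and using that \eqref{hyp: low temperature} is exactly the rearranged inequality $\lambda\geq (d-1)\exp(-\beta\nabla^+h(0))$ confirms \eqref{hyp:symmetric interaction}. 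Plugging the same estimate into \eqref{eq: kappa symmetric interaction} (for $d\geq 2$, so $d-2\geq 0$ and the bound is in the correct direction) yields the explicit lower bound \eqref{eq: kappa symmetric interaction explicit}.

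There is no genuine obstacle here; the proof is a careful but routine computation. The only thing that requires a little attention is the sign of $(d-2)$ when passing from \eqref{eq: kappa symmetric interaction} to \eqref{eq: kappa symmetric interaction explicit}, which is why the explicit form is stated in the regime where it gives a meaningful lower bound.
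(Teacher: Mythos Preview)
Your proof is correct and follows essentially the same approach as the paper: compute $\kappa^\pm(\eta,i)$ directly from the rates, use convexity of $h$ to determine the signs of the discrete gradients so that the $\max$ terms in \eqref{def: kappa+}--\eqref{def: kappa-} simplify, and then invoke Theorem~\ref{thm: LSI int random walks}. Your treatment is in fact slightly more detailed than the paper's (which states only the bound $\kappa^-(\eta,i)\geq \lambda-(d-1)\exp(-\beta\nabla^+h(0))$ for the last part), and your remark about needing $d\geq 2$ for the direction of the inequality in passing to \eqref{eq: kappa symmetric interaction explicit} is a valid observation that the paper leaves implicit.
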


Perturbative criteria typically assert that a probability measure of the form $\inv=\exp(-\beta V)\bm\mu$ where $\bm\mu$ is a product measure satisfy MLSI provided $\beta$ is small enough. This is often called a weak interaction/high temperature condition. On the contrary, \eqref{hyp: low temperature} asks for a lower bound on $\beta$ and is therefore a non perturbative condition. 

\begin{proof}
Using the convexity of $h$ and the definition of $V$ we obtain that 
\[ -\nabla_i^+c(\eta,\gamma_j^+)= [\exp(-\beta\nabla^{+}h(|\eta|))-\exp(-\beta\nabla^{+}h(|\eta|+1))]\] uniformly on $\eta,i,j$. Thus, by convexity of $h$ we obtain that $-\nabla_i^+c(\eta,\gamma_j^+)\geq0$ and that
\bes
\kappa^+(\eta,i)= -\nabla_i^+c(\eta,\gamma_i^+) \geq 0.
\ees
For the same reason
\bes 
\kappa^{-}(\eta,i)=\lambda- (d-1) \big[\exp(-\beta\nabla_+h(|\eta|))-\exp(-\beta\nabla_+h(|\eta|+1))\big],  \quad \forall \eta\in\N^d,i\leq d.
\ees

Therefore, \eqref{hyp:symmetric interaction} implies \ref{H3.3} and Theorem \ref{thm: LSI int random walks} holds with $\kappa$ given by \eqref{eq: kappa symmetric interaction}. To prove the last statement, it suffices to observe that by convexity of $h$, $\kappa^{-}(\eta,i) \geq \lambda- (d-1) \exp(-\beta \nabla^+h(0))$.
\end{proof}
 In the next proposition we show that, combining Theorem \ref{thm: LSI int random walks} with a perturbative argument we can establish MLSI with a positive constant for any $\beta>0$ and relax both the assumption that $h$ is convex increasing and the pointwise condition \eqref{hyp:symmetric interaction}. The price to pay is that we loose the precise control on the constants \eqref{eq: kappa symmetric interaction explicit}.

\begin{prop}
Let $h:\R\rightarrow\R$ be convex  outside a finite interval and set $V(\eta)=\beta h(|\eta|)$. Consider the generator $\cL$ given by \eqref{def: gen int rw} and \eqref{rates: int random walks}. Then there exists $\kappa_1>0$ such that MLSI \eqref{log sob} holds.
\end{prop}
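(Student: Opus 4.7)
I would combine Corollary~\ref{cor: symmetric interaction} with a Holley--Stroock type bounded perturbation argument. The rough idea is to smooth $h$ into a globally convex function $\tilde{h}$ that coincides with $h$ outside a bounded set and to which Corollary~\ref{cor: symmetric interaction} applies, and then to transfer MLSI back to $h$ using the fact that the modification touches only a finite subset of $\N^d$.

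Since $h$ is convex outside a finite interval, there exists $N_0$ such that $\nabla^{+}h$ is non-decreasing on $\{m\geq N_0\}$ and therefore converges to some limit $L\in(-\infty,+\infty]$. Given $\beta>0$, fix a threshold $N\geq N_0$ and a small $\delta>0$ to be chosen shortly. Set $\tilde{h}(m)=h(m)$ for $m\geq N$ and, on $\{0,\ldots,N-1\}$, prescribe $\nabla^{+}\tilde{h}$ to be a non-decreasing sequence ending at $\nabla^{+}h(N)$ whose consecutive increments are all at most $\delta$. The resulting $\tilde{h}$ is convex on $\R$ and differs from $h$ only on the finite set $\{m<N\}$, where both functions are bounded. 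A first-order expansion of $x\mapsto e^{-\beta x}$ yields
\begin{equation*}
0\leq \exp(-\beta\nabla^{+}\tilde{h}(m))-\exp(-\beta\nabla^{+}\tilde{h}(m+1)) = O(\beta\delta) \quad \text{for } m<N,
\end{equation*}
while for $m\geq N$ the same quantity equals $\exp(-\beta\nabla^{+}h(m))-\exp(-\beta\nabla^{+}h(m+1))$, which tends to $0$ as $m\to\infty$ by monotone convergence of $\nabla^{+}h$. Choosing $N$ large enough and $\delta$ small enough, one obtains the hypothesis~\eqref{hyp:symmetric interaction} of Corollary~\ref{cor: symmetric interaction} for $\tilde{h}$. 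Consequently MLSI holds for $\tilde{\inv}\propto\exp(-\beta\tilde{h}(|\cdot|)-V^{-}(\cdot))$ with some positive constant $\tilde{\kappa}_1$.

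A Holley--Stroock style comparison then transfers MLSI from $\tilde{\inv}$ to $\inv$. Since $h-\tilde{h}$ is bounded with finite support, $\De\inv/\De\tilde{\inv}\propto\exp(-\beta(h-\tilde{h})(|\cdot|))$ is bounded away from $0$ and $\infty$, yielding $\cH^{\phi_1}(f|\inv)\leq C_1\,\cH^{\phi_1}(f|\tilde{\inv})$ uniformly in $f>0$. Similarly, the birth rates $c(\eta,\gamma_i^{+})$ and $\tilde{c}(\eta,\gamma_i^{+})$ agree on $\{|\eta|\geq N\}$ and have uniformly bounded ratio on the finite set $\{|\eta|<N\}$, while the death rates coincide everywhere; combined with the comparison of reference measures, this gives $\tilde{\cE}(\log f,f)\leq C_2\,\cE(\log f,f)$. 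Chaining these estimates with MLSI for $\tilde{\inv}$ produces MLSI for $\inv$ with constant $\kappa_1=\tilde{\kappa}_1/(C_1 C_2)>0$. The only delicate step is the construction of $\tilde{h}$ in the regime where $\nabla^{+}h$ has a finite (or non-positive) limit at infinity, since convexity then forces the discrete slopes of $\tilde{h}$ to lie in a bounded range and the low-temperature form~\eqref{hyp: low temperature} of the Corollary cannot be invoked for small $\beta$; the slow-interpolation device is tailored precisely to this case, making the exponential differences arbitrarily small so that the more flexible condition~\eqref{hyp:symmetric interaction} is verified for any fixed $\beta>0$.
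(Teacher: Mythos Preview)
Your proof is correct and follows the same strategy as the paper: replace $h$ by a globally convex $\tilde h$ that coincides with $h$ outside a bounded interval, apply Corollary~\ref{cor: symmetric interaction} to obtain MLSI for $\tilde{\inv}$, and then transfer it back to $\inv$ via a Holley--Stroock bounded-perturbation argument. The only cosmetic difference is that the paper takes $\tilde h$ to be affine on $\{0,\dots,M_\varepsilon\}$ with slope $\nabla^+ h(M_\varepsilon)$ (so the exponential increments there vanish exactly), while you allow a slow interpolation with increments bounded by $\delta$; the linear choice already suffices and avoids having to control the prefactor $\exp(-\beta\nabla^+\tilde h(m))$ in your $O(\beta\delta)$ estimate.
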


\begin{proof}
Since $h$ is convex outside a finite interval we have that $\exp(-\beta\nabla^+h(m))$ is decreasing
for $m$ large enough and $\lim_{m\rightarrow+\infty}\exp(-\beta\nabla^+h(m))$ exists and is finite, from which it follows that $\lim_{m\rightarrow+\infty} \nabla^+\exp(-\beta\nabla^+h(m))=0$.
For $\varepsilon>0$ small enough consider $M_{\varepsilon}$ large enough such that $h$ is convex and $\nabla^+\exp(-\beta\nabla^+h(m))\geq-\frac{\varepsilon}{d-1}$ outside $[0,M_{\varepsilon}]$. Define
$\tilde{h}:\N\rightarrow \R_{\geq 0}$ as follows

\bes
\tilde{h}(m)=\begin{cases}h(m), \quad & \mbox{if $m \geq M_{\varepsilon}$}\\
h(M_\varepsilon) - (h(M_{\varepsilon}+1)-h(M_{\varepsilon})) (M_{\varepsilon}-m),\quad & \mbox{if $m \leq M_{\varepsilon}-1$}
\end{cases}
\ees
The function $\tilde{h}$ satisfies \eqref{hyp:symmetric interaction} because
\bes
\nabla^+\exp(-\beta \nabla^{+} \tilde{h}(m)) = \begin{cases}\nabla^+\exp(-\beta \nabla^{+} h(m)) \leq -\frac{\varepsilon}{d-1},  \quad & \mbox{if $m \geq M_{\varepsilon}$}\\
0,\quad & \mbox{if $m \leq M_{\varepsilon}-1$}
\end{cases}
\ees
If we define $\tilde{V}=\tilde{h}(|\eta|)$ an application of Corollary \ref{cor: symmetric interaction} yields MLSI with constant $\kappa_1=\lambda-\varepsilon(d-2)$. It is easily seen that $\inv=\frac{1}{Z}\exp(-V(\eta))\bm\mu_{\lambda}$ and $\tilde{\inv}=\frac{1}{\tilde{Z}}\exp(-\tilde{V}(\eta))\bm\mu_{\lambda}$ are equivalent probability measures, i.e. $1/K\leq\frac{\De \inv}{\De \tilde{\inv}}\leq K$ for some finite $K$. A standard perturbative argument (see e.g. \cite{holley1986logarithmic}) gives the desired conclusion.
\end{proof}

For diffusions on $\RD$ \eqref{Langevin}, it is a lower bound on the spectrum of the Hessian of $V$ viewed as a quadratic form that implies LSI. In the next Corollary, we show that for interacting random walks a \emph{pointwise} bound on the entries of the Hessian of $V$ plus a local condition \eqref{hyp: oddly convex potentials} on the behavior of $V$ at the origin imply the family of convex Sobolev inequalites. 

\begin{cor}\label{cor: oddly convex potentials}
Let $V:\RD \rightarrow \R$ be twice continuously differentiable and such that 
\be\label{hyp: pointwise bound hessian} \partial_{x_i x_j}V(x) \geq 0 \ee

holds uniformly in $x\in \RD$. Consider the generator $\cL$ given by \eqref{def: gen int rw},\eqref{rates: int random walks} and assume that

\be\label{hyp: oddly convex potentials general} 
\inf_{\eta\in \N^d,i=1,\ldots,d} \lambda-\sum_{\substack{j=1\\ j\neq i}}^d [\exp(-\nabla_j^+V(\eta))-\exp(-\nabla_j^+V(\gamma_i^+\eta))] \geq 0.
\ee
Then
\bei
\item[(i)] The conclusion of Theorem \ref{thm: LSI int random walks} holds with $\kappa$ given by 
\bes
\inf_{\eta\in \N^d,i=1,\ldots,d} \lambda + [\exp(-\nabla_i^+V(\eta))-\exp(-\nabla_i^+V(\gamma_i^+\eta))] -\sum_{\substack{j=1\\ j\neq i}}^d [\exp(-\nabla_j^+V(\eta))-\exp(-\nabla_j^+V(\gamma_i^+\eta))] 
\ees
\item[(ii)] If
\be\label{hyp: oddly convex potentials} 
\min_{i=1,\ldots,d} \lambda-\sum_{\substack{j=1\\ j\neq i}}^d \exp(-\nabla_j^+V(\mathbf{0})) \geq 0,
\ee
where $\mathbf{0}=(0,\ldots,0)\in\N^d$, then the conclusion of Theorem \ref{thm: LSI int random walks} holds with $\kappa$ given by \eqref{hyp: oddly convex potentials}.
\eei
\end{cor}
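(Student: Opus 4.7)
The strategy is to apply Theorem \ref{thm: LSI int random walks} by verifying assumption \ref{H3.3} directly and computing $\kappa^{+}(\eta,i)$ and $\kappa^{-}(\eta,i)$ from \eqref{def: kappa+}--\eqref{def: kappa-} in closed form. The bridge between the continuous hypothesis $\partial_{x_i x_j}V(x)\geq 0$ and the discrete setting is the elementary identity
\[ \nabla_i^+\nabla_j^+ V(\eta) \;=\; \int_0^1\!\!\int_0^1 \partial_{x_i x_j}V(\eta+s\mathbf{e}_i+t\mathbf{e}_j)\,ds\,dt \;\geq\; 0, \]
valid for all $i,j$, which says that for each $j$ the map $\eta\mapsto\nabla_j^+V(\eta)$ is coordinatewise non-decreasing, so $\exp(-\nabla_j^+V(\cdot))$ is coordinatewise non-increasing. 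Being careful to handle the diagonal case $i=j$ here is the only delicate point.

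Armed with this monotonicity, the computation of \eqref{def: kappa+}--\eqref{def: kappa-} becomes bookkeeping. For $j\neq i$ the rate $c(\eta,\gamma_j^-)=\lambda\eta_j$ is independent of $\eta_i$, so $\nabla_i^+ c(\eta,\gamma_j^-)=0$; moreover $\nabla_i^+ c(\eta,\gamma_j^+)=\exp(-\nabla_j^+V(\gamma_i^+\eta))-\exp(-\nabla_j^+V(\eta))\leq 0$. Consequently every $\max\{\cdot,0\}$ term in \eqref{def: kappa+} vanishes while the diagonal contribution $-\nabla_i^+ c(\eta,\gamma_i^+)$ is non-negative, giving
\[ \kappa^{+}(\eta,i)=\exp(-\nabla_i^+V(\eta))-\exp(-\nabla_i^+V(\gamma_i^+\eta))\geq 0. \]
An analogous computation yields
\[ \kappa^{-}(\eta,i)=\lambda-\sum_{j\neq i}\bigl[\exp(-\nabla_j^+V(\eta))-\exp(-\nabla_j^+V(\gamma_i^+\eta))\bigr], \]
which is non-negative precisely under hypothesis \eqref{hyp: oddly convex potentials general}. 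Part $(i)$ then follows from Theorem \ref{thm: LSI int random walks} by summing these two expressions and taking the infimum over $(\eta,i)$, recovering exactly the stated formula.

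For part $(ii)$, the idea is to invoke monotonicity a second time to transfer the lower bound to the origin. Each non-negative difference $\exp(-\nabla_j^+V(\eta))-\exp(-\nabla_j^+V(\gamma_i^+\eta))$ is bounded above by $\exp(-\nabla_j^+V(\eta))\leq \exp(-\nabla_j^+V(\mathbf{0}))$, whence
\[ \kappa^{-}(\eta,i)\;\geq\; \lambda-\sum_{j\neq i}\exp(-\nabla_j^+V(\mathbf{0})), \]
which is non-negative by \eqref{hyp: oddly convex potentials}. Combined with $\kappa^{+}(\eta,i)\geq 0$, this verifies \ref{H3.3} uniformly in $(\eta,i)$ and delivers the announced lower bound for $\kappa$ through Theorem \ref{thm: LSI int random walks}.
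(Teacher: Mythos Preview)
Your proof is correct and follows essentially the same approach as the paper: both deduce the discrete second-difference inequality $\nabla_i^+\nabla_j^+V\geq 0$ from the continuous hypothesis, compute $\kappa^{\pm}(\eta,i)$ explicitly to verify \ref{H3.3}, and then invoke Theorem~\ref{thm: LSI int random walks}. Your integral representation of $\nabla_i^+\nabla_j^+V$ makes the passage from the continuous to the discrete hypothesis slightly more explicit than the paper's presentation, but the structure of the argument is otherwise identical.
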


\begin{proof}
We deduce from \eqref{hyp: pointwise bound hessian} that 

\be\label{eq: oddly convex discrete potential}\nabla_i^+\nabla_j^+V(\eta)\geq 0, \quad   \forall \eta\in \N^d,i,j\leq d.\ee

But then, using the definition of the transition rates \eqref{rates: int random walks} we obtain $\nabla_i^+c(\eta,\gamma_j^+)\leq 0$, which gives 
\[\kappa^{+}(\eta,i) = -\nabla_i^+c(\eta,\gamma_i^+)=\exp(-V^{+}(\eta))-\exp(-V^+(\gamma_i^+\eta))\geq 0, \quad \forall \eta\in \N^d,i\leq d\] 
and 
\bes 
\kappa^-(\eta,i) = \lambda-\sum_{\substack{j=1\\ j\neq i}}^d \exp(-\nabla_j^{+} V(\eta))-\exp(-\nabla_j^{+} V(\gamma^+_{i}\eta)), \quad \forall \eta\in\N^d,i\leq d,
\ees
Therefore if \eqref{hyp: oddly convex potentials general} holds, so does \ref{H3.3}. An application of Theorem \ref{thm: LSI int random walks} concludes the proof of $(i)$. From \eqref{eq: oddly convex discrete potential} we obtain 

\bes
\nabla_j^{+}V(\eta) \geq \nabla_j^{+}V(\mathbf{0}), \quad \forall \eta\in\N^d,j\leq d,
\ees

that yields the bound

\[\forall \eta,i \quad \kappa^-(\eta,i) \geq \lambda-\sum_{\substack{j=1\\ j\neq i}}^d \exp(-\nabla_j^{+} V(\mathbf{0})),\] 
This last bound together with $(i)$ prove $(ii)$.

\end{proof}

We did not use introduce an "inverse temperature" parameter $\beta$ in Corollary \ref{cor: oddly convex potentials}. If we had done so, we could have seen that, as for Corollary \ref{cor: symmetric interaction}, the local condition \eqref{hyp: oddly convex potentials} is always satisfied in the low temperature regime $\beta\rightarrow+\infty$.

\subsection{Contraction of the Wasserstein distance}

Let $p\geq1$ and $d(\eta,\breta)$ be the graph distance on $\N^d$:

\bes
d(\eta,\breta)=\sum_{i=1}^d |\eta_i-\breta_i|.
\ees
For given $\mu,\nu\in \cP_p(\N^d)$ with finite $p$-th moment the Wasserstein distance of order $p$ is defined as

\bes
W_p(\mu,\nu) = \Big(\inf_{\pi\in\Pi(\mu,\nu)}\sum_{\eta',\eta''} d^p(\eta',\eta'') \pi(\eta',\eta'')\Big)^{1/p},
\ees

where $\Pi(\mu,\nu)$ is, as usual, the set of all couplings of $\mu$ and $\nu$. In the next Theorem we show that the hypothesis needed for Theorem \ref{thm: LSI int random walks} are equivalent to certain contractivity properties of the Wasserstein distance along the semigroup $(S_t)_{t\geq0}$ generated by $\cL$. In the statement of the next theorem, for $\mu,\nu\in\cP_p(\N^d)$ we denote by $\mu_t,\nu_t$ the laws $\mu S_t,\nu S_t$, i.e. $\mu_t$ (resp. $\nu_t$) is the distribution at time $t$ of a Markov chain with generator $\cL$ and initial distribution $\mu$ (resp.$\nu$). 
\begin{theorem}\label{thm: wass contr}
Let $V^-,V^+:\N^d\rightarrow\R$ be given and $\cL$ be defined by \eqref{def: gen int rw double pot}\eqref{rates: int random walks double pot}. Moreover, assume that \ref{H0}\ref{H1} hold and that

\be\label{hyp: non explosion} \inf_{\substack{\eta\in\N^d \\ i \leq d }} \nabla_i^+V^-(\eta)>-\infty, \quad \inf\{\nabla_i^-V^-(\eta): \eta_i\leq K\} >-\infty \quad \forall K>0,i\leq d.\ee 

The following statements are equivalent for any $\kappa>0$

\bei
\item[(i)] The estimate  

\be\label{Wass contr random walks}
 W_p(\mu_t,\nu_t)  \leq \exp\big(-\frac{\kappa}{p} t\big) W_p(\mu,\nu)
\ee

holds uniformly on $\mu,\nu \in \cP_p(\N^d)$, $t>0$ and $p\geq1$.

\item[(ii)]  $V^-,V^+$ satisfy  \ref{H3.3}   and
\be\label{eq: coup curvature} \inf_{\substack{\eta\in\N^d\\ i\leq d}} \kappa^+(\eta,i)+\kappa^-(\eta,i) \geq \kappa, \ee
where $\kappa^{+}(\eta,i)$ and $\kappa^{-}(\eta,i)$ are defined at \eqref{def: kappa+} and \eqref{def: kappa-}.
\eei
\end{theorem}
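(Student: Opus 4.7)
The statement is an equivalence, and the two implications require quite different techniques.

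$(ii)\Rightarrow(i)$: I would re-use the Markovian coupling $\cop$ with rates $\couprate$ defined at \eqref{def: coup rates int rw} in the proof of Theorem~\ref{thm: LSI int random walks}. A direct inspection (see figure \ref{figure: coupling support}) shows that, starting from any adjacent pair $(\eta,\gamma^+_i\eta)$, the coupled process $(X_t,Y_t)$ stays inside $\{(\eta',\breta'):d(\eta',\breta')\leq 1\}$: the synchronous and chasing moves preserve adjacency, while the transitions $(\gamma^+_i,\zero)$ and $(\zero,\gamma^-_i)$ send the pair to the diagonal at combined rate $\kappa^+(\eta,i)+\kappa^-(\eta,i)\geq\kappa$ by \eqref{eq: coup curvature}. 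It follows that $\cop d(\cdot,\cdot)(\eta,\gamma^+_i\eta)\leq -\kappa$, and Gronwall gives $\bbE[d(X^{\eta}_t,X^{\gamma^+_i\eta}_t)]\leq e^{-\kappa t}$. Because the distance takes values in $\{0,1\}$ along this trajectory, $\bbE[d(X_t,Y_t)^p]=\bbE[d(X_t,Y_t)]\leq e^{-\kappa t}$ for every $p\geq 1$. A standard path coupling argument then extends the estimate to an arbitrary pair $(\eta,\breta)$ with $d(\eta,\breta)=n$: running $n$ adjacent couplings synchronously along a geodesic and invoking the power-mean inequality $(\sum_k a_k)^p\leq n^{p-1}\sum_k a_k^p$ for $a_k\in\{0,1\}$ gives $\bbE[d(X_t^\eta,X_t^\breta)^p]\leq n^p e^{-\kappa t}=d(\eta,\breta)^p e^{-\kappa t}$. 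Applying the gluing lemma to an optimal $W_p$ coupling of $\mu$ and $\nu$ finally yields $W_p(\mu_t,\nu_t)^p\leq e^{-\kappa t}W_p(\mu,\nu)^p$.

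$(i)\Rightarrow(ii)$: The strategy is to evaluate \eqref{Wass contr random walks} on Dirac pairs at adjacent states, differentiate at $t=0^+$, and read off pointwise inequalities on the rates. Taking $\mu=\delta_\eta$, $\nu=\delta_{\gamma^+_i\eta}$ we have $W_p(\mu,\nu)=1$, so \eqref{Wass contr random walks} reads $W_p(\mu_t,\nu_t)^p\leq e^{-\kappa t}$, and differentiating at $t=0^+$ gives $\frac{d^+}{dt}W_p(\mu_t,\nu_t)^p|_{t=0^+}\leq -\kappa$ for every $p\geq 1$. This derivative coincides with the value of the linear program
\bes
\min_{\couprate\geq 0}\sum_{\gamma,\bgamma\in G^*}\couprate(\gamma,\bgamma)\bigl[d(\gamma\eta,\bgamma\gamma^+_i\eta)^p-1\bigr]
\ees
subject to the marginal constraints \eqref{eq:couprates}. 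Letting $p\to+\infty$, the cost of any transition with $d(\gamma\eta,\bgamma\gamma^+_i\eta)\geq 2$ blows up, forcing any optimizer to be supported on $\{d\leq 1\}$, and the LP value converges to $-M$ where
\bes
M:=\max\Big\{\sum_{\gamma\eta=\bgamma\gamma^+_i\eta}\couprate(\gamma,\bgamma):\couprate\geq 0\text{ feasible and supported on }\{d\leq 1\}\Big\},
\ees
so $M\geq \kappa$. Finally, an enumeration of the pairs $(\gamma,\bgamma)\in G^*\times G^*$ with $d(\gamma\eta,\bgamma\gamma^+_i\eta)\leq 1$ shows that the only transitions landing on the diagonal are $(\gamma^+_i,\zero)$ and $(\zero,\gamma^-_i)$; the marginal constraints together with non-negativity of the rates then pin down the ``synchronous'' and ``chasing'' allocations essentially in the form \eqref{def: coup rates int rw} and leave residual masses exactly equal to $\kappa^+(\eta,i)$ and $\kappa^-(\eta,i)$ at the two diagonal transitions. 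Their non-negativity is \ref{H3.3}, and their sum being at least $\kappa$ is \eqref{eq: coup curvature}. Assumption \eqref{hyp: non explosion} is used to ensure that the semigroup preserves the class of measures with finite $W_p$ moments and to legitimate the differentiation at $t=0^+$.

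\textbf{Main obstacle.} The delicate step is the identification $M=\kappa^+(\eta,i)+\kappa^-(\eta,i)$ in the $(i)\Rightarrow(ii)$ direction: one must verify that no clever rerouting of the rate mass through alternative $(\gamma,\bgamma)$ pairs in $\{d\leq 1\}$ can put more mass on the two diagonal moves than what the explicit coupling \eqref{def: coup rates int rw} already places there. The identities \eqref{eq: potential commutation} and \eqref{eq: potential commutation +}, which governed the proof of Theorem~\ref{thm: LSI int random walks}, should again be the key to keeping the bookkeeping consistent.
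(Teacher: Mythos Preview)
Your outline is correct and in several places takes a genuinely different route from the paper.

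For $(ii)\Rightarrow(i)$ your Markovian coupling argument is actually more direct than what the paper does. The paper never runs the coupled process for positive time; instead it constructs for each small $t$ the \emph{static} optimal coupling $\bar\pi_t$ of the one-jump approximations $\bar\mu_t,\bar\nu_t$ (Lemma~\ref{lem: 1 jump approx}), checks optimality by cyclical monotonicity (Lemma~\ref{optimalcoupling}), reads off $\tfrac{d}{dt}W_p|_{t=0}=-\tfrac{1}{p}(\kappa^++\kappa^-)$, extends to arbitrary Dirac pairs along a geodesic, and closes with Gr\"onwall. Your observation that the Markovian coupling keeps the pair inside $\{d\le 1\}$ and hits the diagonal at rate $\ge\kappa$, so that $\bbE[d(X_t,Y_t)^p]=\bbP[\tau>t]\le e^{-\kappa t}$, bypasses both auxiliary lemmas. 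One cosmetic simplification: you do not need a joint ``synchronous'' coupling of $n{+}1$ chains; the triangle inequality for $W_p$ already gives $W_p(\delta_\eta S_t,\delta_{\breta}S_t)\le\sum_k W_p(\delta_{\eta^{(k)}}S_t,\delta_{\eta^{(k+1)}}S_t)\le n\,e^{-\kappa t/p}$.

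For $(i)\Rightarrow(ii)$ the paper and you start identically (adjacent Dirac pair, differentiate at $t=0$), but the paper \emph{splits} the work: Step~1 shows that if, say, $\kappa^-(\eta,i)<0$ then every coupling of $\bar\mu_t,\bar\nu_t$ is forced to put mass at least $-t\kappa^-(\eta,i)$ at distance $\ge 2$, which makes $W_p^p$ increase for large $p$ and contradicts \eqref{Wass contr random walks}; Step~2, once \ref{H3.3} is secured, uses the explicit optimal $\bar\pi_t$ to compute $W_p^p(\bar\mu_t,\bar\nu_t)=1-t(\kappa^++\kappa^-)$ and read off \eqref{eq: coup curvature}. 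Your $p\to\infty$ LP limit packages both steps at once, and it works: the LP value diverges to $+\infty$ exactly when no feasible $\couprate$ is supported on $\{d\le1\}$, and otherwise converges to $-M$.

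On your ``main obstacle'': the enumeration is more rigid than you fear, and the commutation identities \eqref{eq: potential commutation}--\eqref{eq: potential commutation +} are \emph{not} needed here. Once you list the pairs in $\{d\le 1\}$, the row/column marginals for $\gamma_j^{\pm}$ with $j\neq i$ force $\couprate(\gamma_j^{\pm},\gamma_i^-)-\couprate(\gamma_i^+,\gamma_j^{\pm})=-\nabla_i^+c(\eta,\gamma_j^{\pm})$, so their sum is minimised at $|\nabla_i^+c(\eta,\gamma_j^{\pm})|$; together with the forced values $\couprate(\gamma_i^+,\gamma_i^+)=c(\gamma_i^+\eta,\gamma_i^+)$ and $\couprate(\gamma_i^-,\gamma_i^-)=c(\eta,\gamma_i^-)$ this pins $M=\kappa^+(\eta,i)+\kappa^-(\eta,i)$, and feasibility of the LP is exactly \ref{H3.3}. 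The paper reaches the same conclusion via cyclical monotonicity rather than LP bookkeeping. Finally, your guess about \eqref{hyp: non explosion} is correct: the paper uses it only to justify the one-jump approximation $W_p^p(\mu_t,\bar\mu_t)\le Ct^2$ (Lemma~\ref{lem: 1 jump approx}) that legitimates the differentiation.
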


We have the following Corollary.

\begin{cor}\label{cor: contr implies log sob}
Let $V^-,V^+:\N^d\longrightarrow\R$ be given and $\cL$ be defined by \eqref{def: gen int rw double pot}\eqref{rates: int random walks double pot}. Assume that \ref{H0},\ref{H1} and \eqref{hyp: non explosion} hold. If the contraction estimate \eqref{Wass contr random walks}
holds uniformly on $\mu,\nu \in \cP_p(\N^d)$, $t>0$ and $p\geq1$, then for all $\phi$ satisfying \ref{H2} the convex Sobolev inequality holds with constant $\kappa_{\phi}=\kappa$ and for any $\alpha\in[1,2]$ the Beckner inequality holds with $\kappa_{\alpha}=\alpha\kappa$.
\end{cor}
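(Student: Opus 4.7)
The plan is to obtain the corollary as an immediate concatenation of Theorem~\ref{thm: wass contr} and Theorem~\ref{thm: LSI int random walks}, so no further probabilistic or analytic input is needed beyond what these two theorems already provide.

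First I would invoke the implication $(i)\Rightarrow(ii)$ of Theorem~\ref{thm: wass contr}. The hypotheses of the corollary (namely \ref{H0}, \ref{H1}, \eqref{hyp: non explosion} and the contraction estimate \eqref{Wass contr random walks} uniformly over $p\geq 1$, $t>0$ and $\mu,\nu\in\cP_p(\N^d)$) match exactly the hypotheses of the equivalence theorem, so the contraction rate $\kappa$ can be transferred into both the sign condition \ref{H3.3} on the potentials and the uniform pointwise lower bound
\[
\inf_{\eta\in\N^d,\,i\leq d}\,\bigl(\kappa^+(\eta,i)+\kappa^-(\eta,i)\bigr)\;\geq\;\kappa.
\]

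Once \ref{H3.3} is known and this infimum is at least $\kappa$, the quantity defined in \eqref{kappa log concave condition} is at least $\kappa$. Thus Theorem~\ref{thm: LSI int random walks} applies verbatim: part $(i)$ of that theorem yields the convex Sobolev inequality \eqref{eq: convex sob rev} with constant $\kappa_\phi=\kappa$ for every $\phi$ satisfying \ref{H2}, and in particular MLSI with $\kappa_1=\kappa$; part $(ii)$ gives the Beckner inequality \eqref{beckner ineq} with constant $\kappa_\alpha=\alpha\kappa$ for $\alpha\in(1,2]$. Since MLSI coincides with the endpoint $\alpha=1$ of the formula $\kappa_\alpha=\alpha\kappa$, the statement for $\alpha\in[1,2]$ is recovered in one stroke.

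There is no genuine obstacle at this stage: all the substantive work lies in the two theorems being invoked. Theorem~\ref{thm: wass contr} does the delicate job of reading off pointwise coupling-curvature bounds from a global contraction of $W_p$ (this is the direction that requires differentiating the Wasserstein distance along well-chosen pairs of initial distributions supported near a site), while Theorem~\ref{thm: LSI int random walks} turns these pointwise bounds into the Bakry--\'Emery type convexity estimate \eqref{bakem gamma 2} via the explicit coupling rates of \eqref{def: coup rates int rw} and the cancellation identities established in its four-step proof. The corollary itself is then just a bookkeeping statement matching the output of the first theorem to the input of the second.
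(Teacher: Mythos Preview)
Your proposal is correct and matches the paper's intended argument exactly: the corollary is stated without proof in the paper precisely because it is the immediate concatenation of Theorem~\ref{thm: wass contr} (direction $(i)\Rightarrow(ii)$) with Theorem~\ref{thm: LSI int random walks}, as you describe. Your observation that the endpoint $\alpha=1$ is covered by part $(i)$ of Theorem~\ref{thm: LSI int random walks} (MLSI with $\kappa_1=\kappa=1\cdot\kappa$) so that the full range $\alpha\in[1,2]$ follows is also correct.
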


Note that if $\cL$ satisfies \ref{H0},\ref{H1} the only reversible measure is $\inv=\frac{1}{Z}\exp(-V^+-V^-)$.
The technical assumption \eqref{hyp: non explosion} is made to simplify the proof and put forward the main ideas but does not play an essential role. It is likely that this assumption can be largely weakened. Moreover, observe that the second condition therein is always satisfied in the setting \eqref{rates: int random walks}, i.e. when $V^{-}(\eta)=\sum_{i=1}^d\log(\lambda) \eta_i+ \log(\eta_i!)$.

\begin{remark} 
For the Langevin dynamics \eqref{Langevin} even more is known \cite{von2005transport}: indeed, Wasserstein contraction for $p=1$ implies, and is in fact equivalent to the same property for $p\geq 1$, all this properties being equivalent to the $\kappa$ convexity of $V$. In particular, this implies that Wasserstein contraction for $p=1$ suffices to conclude that the logarithmic Sobolev inequality holds. At the moment of writing, we do not know if the same result holds in $\N^d$. However, it is not hard to see that for continuous Markov chains on $\N^d$, there is no equivalence between Wasserstein contraction for different values of $p$.
\end{remark}

In the proof of the Theorem, we will need the following technical lemma, whose proof we defer to the appendix

\begin{lemma}\label{lem: 1 jump approx}
Under the same hypothesis of Theorem \ref{thm: wass contr} consider $\eta\in\N^d$ and set $\mu=\delta_{\eta}\in\cP_p(\N^d)$. Then there exist $C>0$ such that
\be\label{eq: 1 jump approx}
W^p_p(\mu_t,\bar{\mu_t}) \leq C t^{2}
\ee
holds for $t$ small enough, where 

\be\label{def: barmu}
\bar{\mu}_t = \big(1-t\sum_{\gamma\in G}c(\eta,\gamma)\big)\delta_{\eta}+ t\sum_{\gamma\in G}c(\eta,\gamma)\delta_{\gamma\eta} 
\ee
\end{lemma}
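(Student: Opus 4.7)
My plan is to view $\bar\mu_t$ as the first-order Euler approximation to $\mu_t$ and bound the resulting second-order error. Let $(X_s)_{s\geq 0}$ be the chain started at $\eta$, $N_t$ its number of jumps by time $t$, $\Lambda:=\sum_{\gamma\in G}c(\eta,\gamma)$ and $\lambda(\xi):=\sum_{\gamma\in G}c(\xi,\gamma)$; write $\mathcal{V}:=\{\eta\}\cup\{\gamma\eta:\gamma\in G\}$ for the (finite) support of $\bar\mu_t$. The first step will be to decompose $\mu_t$ according to $\{N_t=0\}$, $\{N_t=1\}$, $\{N_t\geq 2\}$ and to use the explicit identities $\mathbb{P}(N_t=0)=e^{-\Lambda t}$ and
\[
\mathbb{P}(X_t=\gamma\eta,\,N_t=1)=c(\eta,\gamma)\int_0^t e^{-\Lambda s}e^{-\lambda(\gamma\eta)(t-s)}\,\mathrm{d}s
\]
to obtain, via Taylor expansion at $t=0$, that $|\mu_t(\eta)-(1-\Lambda t)|=O(t^2)$, $|\mu_t(\gamma\eta)-tc(\eta,\gamma)|=O(t^2)$ for each $\gamma\in G$, and $\mu_t(\N^d\setminus\mathcal V)\leq\mathbb{P}(N_t\geq 2)=O(t^2)$. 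Since $\mathcal V$ is finite, summing these bounds already yields $\|\mu_t-\bar\mu_t\|_{\mathrm{TV}}=O(t^2)$.

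\textbf{Second step: couple and reduce to a moment bound.} Next I will use the standard total-variation coupling: place the mass $\mu_t\wedge\bar\mu_t$ on the diagonal (zero cost) and couple the residuals $\rho_+:=(\mu_t-\bar\mu_t)_+$ and $\rho_-:=(\bar\mu_t-\mu_t)_+$ freely, say independently. Both residuals have mass $\|\mu_t-\bar\mu_t\|_{\mathrm{TV}}=O(t^2)$, and the elementary inequality $d^p(\xi,\xi')\leq 2^{p-1}(d^p(\xi,\eta)+d^p(\xi',\eta))$ will give
\[
W_p^p(\mu_t,\bar\mu_t)\leq 2^{p-1}\Big(\int d^p(\xi,\eta)\,\mathrm{d}\rho_+(\xi)+\int d^p(\xi',\eta)\,\mathrm{d}\rho_-(\xi')\Big).
\]
The measure $\rho_-$ and the restriction of $\rho_+$ to $\mathcal V$ both live at graph distance $\leq 1$ from $\eta$ and have total mass $O(t^2)$, so they contribute $O(t^2)$. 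Since every jump changes the graph distance by exactly one, $d(X_t,\eta)\leq N_t$ and $\{X_t\notin\mathcal V\}\subseteq\{N_t\geq 2\}$, so the remaining piece of $\rho_+$ is controlled by $\mathbb{E}[N_t^p\mathbf{1}\{N_t\geq 2\}]$, and the whole problem reduces to this single moment estimate.

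\textbf{Main obstacle: the moment bound on $N_t$.} The hard part will be to show $\mathbb{E}[N_t^p\mathbf{1}\{N_t\geq 2\}]=O(t^2)$, which is where the non-explosion assumption \eqref{hyp: non explosion} enters in an essential way. Setting $\Lambda_K:=\sup_{\xi\in B_K(\eta)}\lambda(\xi)$ with $B_K(\eta)=\{\xi\in\N^d:d(\xi,\eta)\leq K\}$, the second condition in \eqref{hyp: non explosion} bounds the $\gamma_i^-$ rates on bounded cylinders, and the first condition, combined with reversibility of $\inv=Z^{-1}\exp(-V^+-V^-)$, controls the $\gamma_i^+$ rates, so that $\Lambda_K<\infty$ and grows at most polynomially in $K$. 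Because the chain sits in $B_{k-1}(\eta)$ strictly before its $k$-th jump, a stochastic-domination argument for the successive holding times by i.i.d.\ $\mathrm{Exp}(\Lambda_{k-1})$ variables will yield
\[
\mathbb{P}(N_t\geq k)\leq\frac{(\Lambda_{k-1}t)^k}{k!},
\]
after which $\mathbb{E}[N_t^p\mathbf{1}\{N_t\geq 2\}]\leq\sum_{k\geq 2}k^p(\Lambda_{k-1}t)^k/k!$ is, for $t$ small enough, dominated by its $k=2$ term, and is therefore $O(t^2)$. The genuine difficulty is quantifying non-explosion well enough to beat the $k^p$ factor in this series; once that is done, combining the three displays above produces $W_p^p(\mu_t,\bar\mu_t)\leq Ct^2$ with $C=C(\eta,p)$.
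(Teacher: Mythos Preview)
Your overall reduction—couple $\mu_t$ and $\bar\mu_t$, isolate the mass outside $\mathcal V$, and bound the $p$-th moment of the displacement $d(X_t,\eta)$ on that event—is sound and close in spirit to the paper's. The gap is in the last step, where you replace $d(X_t,\eta)$ by the total jump count $N_t$. This throws away too much: the death rates $c(\xi,\gamma_i^-)$ are in general unbounded (think $\lambda\xi_i$), so $N_t$ can be enormous while $d(X_t,\eta)$ stays small. Concretely, your bound $\mathbb{P}(N_t\geq k)\leq(\Lambda_{k-1}t)^k/k!$ is correct, but the series $\sum_{k\geq 2}k^p(\Lambda_{k-1}t)^k/k!$ converges for some $t>0$ only when $\Lambda_K=O(K)$; already $\Lambda_K\asymp K^2$ makes it diverge for every $t>0$. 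Hypothesis \eqref{hyp: non explosion} gives no growth rate for $\Lambda_K$—it merely says the death rates are finite on each fixed cylinder—so your claim of polynomial growth is unjustified, and polynomial of degree $>1$ would not suffice anyway. (The appeal to reversibility for bounding the $\gamma_i^+$ rates is also empty: detailed balance just rewrites $c(\eta,\gamma_i^+)$ in terms of $c(\gamma_i^+\eta,\gamma_i^-)$ and the potentials; it yields no new inequality.)

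The paper avoids this by never invoking $N_t$. After reducing to $\mathbb{E}[d(X^\eta_t,X^\eta_{t\wedge T_1})^p]$ via the first-jump coupling, it bounds $\mathbb{P}[d(X_r^{\gamma\eta},\gamma\eta)\geq k]$ \emph{coordinate by coordinate}: for each $i$, the positive excursion of the $i$-th coordinate is dominated by a Poisson process with a globally bounded rate, while the negative excursion below the starting level $(\gamma\eta)_i$ is dominated by a Poisson process whose rate is $\sup\{c(\xi,\gamma_i^-):\xi_i\leq(\gamma\eta)_i\}$, a supremum over a \emph{fixed} bounded region—precisely what the second half of \eqref{hyp: non explosion} controls. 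The crucial point is that the resulting rate $A$ does not depend on $k$, so one obtains $\mathbb{P}[d(X_r^{\gamma\eta},\gamma\eta)\geq k]\leq C(Adr)^k/k!$ with a single constant $A$, and the ensuing series has geometric tail for small $t$. To repair your argument you would have to replace the crude inequality $d(X_t,\eta)\leq N_t$ by exactly this coordinate-wise stochastic domination.
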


\begin{proof}[Proof of Theorem \ref{thm: wass contr}]
We first prove (i)$\Rightarrow$(ii) in two steps. In the first step we show that $(i)$ implies \ref{H3.3}, whereas in the second step we show that it implies \eqref{eq: coup curvature}.

\bei \item\underline{Step1: (i)$\Rightarrow$ \ref{H3.3}  } If \ref{H3.3}   is violated, then there exist $\eta\in\N^d$ and $i\leq d$,such that $\kappa^{-}(\eta,i)<0$ or $\kappa^{+}(\eta,i)<0$. We only treat the case $\kappa^{-}(\eta,i)<0$. The proof for the other case is almost identical and we omit the details. We set $\mu = \delta_{\eta}$, $\nu=\delta_{\gamma^{+}_i\eta}$ and define $\bar\mu_t,\bar\nu_t$ as in \eqref{def: barmu}. Invoking Lemma \ref{lem: 1 jump approx} we have that 
\be\label{eq: 1 jump wass approx} W^p_p(\mu_t,\nu_t)=W^p_p(\bar\mu_t,\bar\nu_t)+ o(t),\ee
where as usual by $o(t)$ we denote a function such that $o(t)/t\rightarrow0$ as $t\rightarrow0$.
Consider an arbitrary coupling $\pi$ of $\bar\mu_t$ and $\bar\nu_t$. Then $\pi$ is supported on the set
$\{ (\gamma\eta,\bgamma\gamma^+_i\eta): \gamma,\bgamma\in G^* \}$. We claim that $\pi[d(\eta',\eta'')\geq 2]\geq-t\kappa^-(\eta,i)$. To prove this, we observe that
\bes
\pi[d(\eta',\eta'')\geq 2] \geq \pi[\eta'=\gamma^{-}_i\eta,\eta''\neq\eta] + \sum_{\substack{\gamma\in G\\\gamma\neq\gamma^{+}_i,\gamma_i^-}} \pi[\eta'=\gamma\eta,\eta''\notin\{ \eta,\gamma\gamma^{+}_i\eta\} ].
\ees

We have
 \begin{align*}
 &\pi[\eta'=\gamma^{-}_i\eta,\eta''\neq\eta] +\sum_{\substack{\gamma\in G\\\gamma\neq\gamma^{+}_i,\gamma_i^-}} \pi[\eta'=\gamma\eta,\eta''\notin\{ \eta,\gamma\gamma^{+}_i\eta\} ]\\
 =&\pi[\eta'=\gamma^{-}_i\eta]-\pi[(\gamma^-_i\eta,\eta)]+ \sum_{\substack{\gamma\in G\\\gamma\neq\gamma^{+}_i,\gamma_i^-}} \pi[\eta'=\gamma\eta]-\pi[(\gamma\eta,\eta)]-\pi[(\gamma\eta,\gamma\gamma^{+}_i\eta) ]\\
 =&tc(\eta,\gamma^{-}_i) - \sum_{\substack{\gamma\in G\\\gamma\neq\gamma^{+}_i,\gamma_i^-}}\pi[(\gamma\eta,\eta)]+\sum_{\substack{\gamma\in G\\\gamma\neq\gamma^{+}_i,\gamma_i^-}} \pi[\eta'=\gamma\eta]-\pi[(\gamma\eta,\gamma\gamma^{+}_i\eta)]\\
 \geq &tc(\eta,\gamma^{-}_i) -\pi[\eta''=\eta]+\sum_{\substack{\gamma\in G\\\gamma\neq\gamma^{+}_i,\gamma_i^-}}\max\{\pi[\eta'=\gamma\eta]-\pi[\eta''=\gamma\gamma_i^+\eta],0\}\\
 \geq& t[c(\eta,\gamma^{-}_i) -c(\gamma^+_i\eta,\gamma^-_i)] +t\sum_{\substack{\gamma\in G\\\gamma\neq\gamma^{+}_i,\gamma_i^-}} \max\{-\nabla_i^+c(\eta,\gamma),0\}\\
 =&-t\kappa^-(\eta,i),
\end{align*}
which proves the claim. We also observe that $\pi[d(\eta',\eta'')=0]=\pi[(\eta,\eta)]+\pi[(\gamma^{+}_i \eta,\gamma^{+}_i\eta)]\leq t [c(\gamma^+_i\eta,\gamma^{-}_i)+c(\eta,\gamma^{+}_i)]$. Therefore, if $\bar\pi_t$ is the optimal coupling for $W_p(\bar\mu_t,\bar\nu_t)$ we have
\begin{align*}
    \frac{W^p_{p}(\bar\mu_t,\bar\nu_t)-W^p_{p}(\mu,\nu)}{t} &\geq \frac{(2^{p}-1)\bar\pi_t[d(\eta',\eta'')\geq 2]-\bar\pi_t[d(\eta',\eta'')=0]}{t}\\
    &\geq -(2^p-1)\kappa^-(\eta,i)-[c(\eta,\gamma^{+}_i)+c(\gamma^{+}_i\eta,\gamma^-_i)]
\end{align*}
Since this quantity is strictly positive  for $p$ large enough we obtain a contradiction with \eqref{Wass contr random walks} after letting $t\rightarrow 0$ and recalling that $W_p(\mu_t,\nu_t)=W_p(\bar\mu_t,\bar\nu_t)+o(t)$. The proof that $(i)$ implies \ref{H3.3}   is now complete. 
\item \underline{Step 2: (i)$\Rightarrow$ \eqref{eq: coup curvature}.} Consider an arbitrary pair $\eta,i$, set $\mu=\delta_{\eta}$,$\nu=\delta_{\gamma^+_i\eta}$ and let $\bar\mu_t,\bar\nu_t$ as in \eqref{def: barmu}. To do the proof, we construct explicitly the optimal coupling $\bar\pi_t$ for $W_p(\bar\mu_t,\bar\nu_t)$, which is given by setting $\bar\pi_t[(\eta',\eta'')]=0$ outside the set $\{(\gamma\eta,\bgamma\gamma_i^+\eta): \gamma,\bgamma\in G^* \}$ and defining
\be\label{eq: opt coupl}
\bar{\pi}_t(\gamma\eta,\bgamma\gamma_i^+\eta) = \begin{cases} 
t\min\{c(\eta,\gamma),c(\gamma_i^+\eta,\gamma) \}, & \quad \mbox{if $\gamma=\bgamma\in G$},\\
t\max\{\nabla_i^+c(\eta,\bgamma),0 \}, & \quad \mbox{if $\gamma=\gamma_i^+$ and $\bgamma\in G,\bgamma\neq\gamma^{+}_i,\gamma_i^-$},\\
t\max\{-\nabla_i^+c(\eta,\gamma),0 \}, & \quad \mbox{if $\gamma\in G,\gamma\neq\gamma^{+}_i,\gamma_i^-$ and $\bgamma=\gamma_i^-$},\\
t\kappa^+(\eta,i), & \quad \mbox{if $\gamma=\gamma_i^+,\bgamma=\zero$},\\
t\kappa^-(\eta,i), & \quad \mbox{if $\gamma=\zero,\bgamma=\gamma^-_i$},\\
\bar\mu_{t}(\eta)-t\kappa^-(\eta,i) (=\bar\nu_{t}(\eta)-t\kappa^+(\eta,i)),& \quad \mbox{if $\gamma=\zero,\bgamma=\zero$.}
\end{cases}
\ee

 The admissibility and optimality of $\bar\pi_t$ are shown at Lemma \ref{optimalcoupling}, which we prove separately. By construction, if  $(\eta',\eta'')$ is in the support of $\bar\pi_t$ then $d(\eta',\eta'')$ is worth $0$ if $(\eta',\eta'')=(\eta,\eta),(\gamma^+_i\eta,\gamma^+_i\eta)$ and $1$ otherwise. This gives
\be\label{eq: wass contr kappa+}
W^p_{p}(\bar\mu_t,\bar\nu_t)= 1-t[\kappa^-(\eta,i)+\kappa^+(\eta,i)]
\ee
 Therefore, invoking \eqref{eq: 1 jump wass approx} and recalling that $W_p(\mu,\nu)=1$ we obtain
 \be\label{eq: wass contr kappa+ 2}
 \frac{\De}{\De t} W^p_{p}(\mu_t,\nu_t) = -[\kappa^-(\eta,i)+\kappa^+(\eta,i)] \quad \text{and} \quad \frac{\De}{\De t} W_{p}(\mu_t,\nu_t) =-\frac1p[\kappa^-(\eta,i)+\kappa^+(\eta,i)].
 \ee
Comparing this with the hypothesis \eqref{Wass contr random walks} yields \eqref{eq: coup curvature} since the choice of $\eta$ and $i$ was arbitrary.
\eei
 \underline{Proof of (ii)$\Rightarrow $(i)}
Let $p\geq1,\eta,i$ be fixed, $\mu=\delta_{\eta},\nu=\delta_{\gamma_i^+\eta}$ and $\bar\mu_t,\bar\nu_t$ be defined as before via \eqref{def: barmu}. The hypothesis (ii) implies that the coupling $\bar\pi_t$ defined at \eqref{eq: opt coupl} is admissible and from Lemma \ref{optimalcoupling} we get that $\bar\pi_t$ is optimal for $W_p(\bar{\mu}_t,\bar{\nu}_t)$. Following the proof of Step 2, we obtain the relation \eqref{eq: wass contr kappa+ 2}. Using \eqref{eq: coup curvature} in there we arrive at 
\be\label{eq: diff wass contr}
\frac{\De}{\De t} W_{p}(\mu_t,\nu_t)\Big|_{t=0} \leq -\frac{\kappa}{p} W_{p}(\mu,\nu)
\ee
 We can extend \eqref{eq: diff wass contr} to any pair Dirac measures $\mu=\delta_{\eta},\nu=\delta_{\breta}$ by using the bound \eqref{eq: diff wass contr} along a geodesic connecting $\eta$ and $\breta$ and the triangular inequality. Next, consider two arbitrarily compactly supported $\mu$ and $\nu$ in $\cP_p(\N^d)$ and denote by $\pi_0$ the optimal couplingfor $W_p(\mu,\nu)$. For any $(\eta,\breta)\in \mathrm{supp}(\pi_0)$, let $\pi_t^{\eta,\breta}$ the optimal coupling for $W_p(\mu^{\eta}_t,\nu^{\eta}_t)$, where $\mu^{\eta}_0=\delta_{\eta},\nu^{\breta}_0=\delta_{\breta}.$ By constructing $\pi_t\in \Pi(\mu_t,\nu_t)$ as follows
\bes
\pi_t(\eta',\eta'')= \sum_{\eta,\breta \in \N^d}\pi_0(\eta,\breta)\pi_t^{\eta,\breta}(\eta',\eta'')
\ees
it is easily seen that \eqref{eq: diff wass contr} holds. The extension to non compact probability measures in $\cP_p(\RD)$ follows a standard approximation argument. The differential inequality \eqref{eq: diff wass contr} extends to an arbitrary value of $t$ by Markovianity. An application of Gr\"onwall'lemma concludes the proof.
\end{proof}
Here we prove the auxiliary Lemma needed for the proof of Theorem \ref{thm: wass contr}. To follow the proof, it may be helpful to refer to Figure \ref{figure: coupling support}.
\begin{lemma}\label{optimalcoupling}
Let the hypothesis of Theorem \ref{thm: wass contr} hold and \ref{H3.3} hold as well. For $\mu=\delta_{\eta},\nu=\delta_{\gamma^+_i\eta}$ and $t$ small enough, let $\bar\mu_t,\bar\nu_t$ be given by \eqref{def: barmu}. Next, define $\bar\pi_t\in\cP(\N^d\times\N^d)$  by setting 

\bes
\bar{\pi}_t(\gamma\eta,\bgamma\gamma_i^+\eta) = \begin{cases} 
t\min\{c(\eta,\gamma),c(\gamma_i^+\eta,\gamma) \}, & \quad \mbox{if $\gamma=\bgamma\in G$},\\
t\max\{\nabla_i^+c(\eta,\bgamma),0 \}, & \quad \mbox{if $\gamma=\gamma_i^+$ and $\bgamma\in G,\bgamma\neq\gamma^{+}_i,\gamma_i^-$},\\
t\max\{-\nabla_i^+c(\eta,\gamma),0 \}, & \quad \mbox{if $\gamma\in G,\gamma\neq\gamma^{+}_i,\gamma_i^-$ and $\bgamma=\gamma_i^-$},\\
t\kappa^+(\eta,i), & \quad \mbox{if $\gamma=\gamma_i^+,\bgamma=\zero$},\\
t\kappa^-(\eta,i), & \quad \mbox{if $\gamma=\zero,\bgamma=\gamma^-_i$},\\
\bar\mu_{t}(\eta)-t\kappa^-(\eta,i) (=\bar\nu_{t}(\eta)-t\kappa^+(\eta,i)),& \quad \mbox{if $\gamma=\zero,\bgamma=\zero$,}
\end{cases}
\ees
 and
 
 \bes
\pi(\eta',\eta'')=0, \quad \forall (\eta',\eta'')\notin \{(\gamma\eta,\bgamma\gamma_i^+\eta): \gamma,\bgamma\in G^* \}.
\ees 
Then $\bar{\pi}_t\in\Pi(\bar{\mu}_t,\bar\nu_t)$ and $\bar\pi_t$ is optimal for $W_p(\bar\mu_t,\bar\nu_t)$.
\end{lemma}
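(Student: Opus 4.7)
The plan is to split the argument into two independent pieces. Admissibility is an essentially mechanical bookkeeping check; optimality for $W_p(\bar\mu_t,\bar\nu_t)$ will be reduced to optimality for $W_1$ using the integer-valued structure of the graph metric, and the resulting $W_1$ lower bound will then be obtained from Kantorovich--Rubinstein duality with an explicit $1$-Lipschitz function matched to the sign pattern of the rates.

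For admissibility, note first that every entry of $\bar\pi_t$ in the case split is non-negative: the $\min$ and $\max$ entries are automatic, and $t\kappa^+(\eta,i), t\kappa^-(\eta,i) \geq 0$ by \ref{H3.3}. To verify the first marginal I would fix $\gamma \in G^*$ and sum $\sum_{\bgamma \in G^*} \bar\pi_t(\gamma\eta, \bgamma\gamma_i^+\eta)$. For $\gamma \in G \setminus \{\gamma_i^\pm\}$, the parallel contribution $\min\{c(\eta,\gamma),c(\gamma_i^+\eta,\gamma)\}$ combines with the ``running'' contribution $\max\{-\nabla_i^+c(\eta,\gamma),0\}$ to give $c(\eta,\gamma)$ via the elementary identity $\min\{a,b\}+\max\{a-b,0\}=a$. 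For $\gamma=\gamma_i^+$, the parallel term together with the terms $\max\{\nabla_i^+c(\eta,\bgamma),0\}$ for $\bgamma \in G \setminus \{\gamma_i^\pm\}$ and the diagonal weight $t\kappa^+(\eta,i)$ sum to $c(\eta,\gamma_i^+)$ by the very definition of $\kappa^+$. The cases $\gamma=\gamma_i^-$ (only the parallel term survives) and $\gamma=\zero$ (diagonal $t\kappa^-(\eta,i)$ plus residual $\bar\mu_t(\eta)-t\kappa^-(\eta,i)$) are immediate, and the second marginal is obtained by a symmetric manipulation.

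The optimality argument starts from the structural observation that $\mathrm{supp}(\bar\pi_t) \subseteq \{(\eta',\eta'') : d(\eta',\eta'') \leq 1\}$, since every coupled pair in the case split differs by at most one unit vector. Consequently
\bes
\sum_{\eta',\eta''} d^p(\eta',\eta'') \bar\pi_t(\eta',\eta'') = \bar\pi_t[d=1] = 1 - t(\kappa^+(\eta,i)+\kappa^-(\eta,i))
\ees
for every $p \geq 1$, where the diagonal mass $t(\kappa^+(\eta,i)+\kappa^-(\eta,i))$ is read off from the $(\gamma_i^+\eta,\gamma_i^+\eta)$ and $(\eta,\eta)$ entries. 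For any competing $\pi \in \Pi(\bar\mu_t,\bar\nu_t)$ and any $p\geq 1$ one has $d^p \geq d$ because $d$ is integer-valued, so $\sum d^p\, \pi \geq \sum d\, \pi \geq W_1(\bar\mu_t,\bar\nu_t)$; optimality of $\bar\pi_t$ for every $W_p$ therefore reduces to the one-dimensional lower bound $W_1(\bar\mu_t,\bar\nu_t) \geq 1 - t(\kappa^+(\eta,i)+\kappa^-(\eta,i))$.

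To prove this $W_1$ bound I would appeal to Kantorovich--Rubinstein duality, testing against an explicit $1$-Lipschitz function $\Phi : \N^d \to \R$ whose values on $\mathrm{supp}(\bar\mu_t)$ are dictated by the sign pattern of the discrete derivatives: $\Phi(\eta)=0$, $\Phi(\gamma_i^+\eta)=1$, $\Phi(\gamma_i^-\eta)=-1$ and, for $j \neq i$, $\Phi(\gamma_j^\pm\eta)=\mathrm{sgn}(\nabla_i^+c(\eta,\gamma_j^\pm))$. On the distance-two points $\gamma\gamma_i^+\eta \in \mathrm{supp}(\bar\nu_t)$ the values of $\Phi$ are filled in via the metric $c$-conjugate (so in particular $\Phi(\gamma_i^+\gamma_i^+\eta)=2$ and $\Phi(\gamma_j^\pm\gamma_i^+\eta)=1+\mathrm{sgn}(\nabla_i^+c(\eta,\gamma_j^\pm))$), preserving the $1$-Lipschitz property while saturating the dual constraint on $\mathrm{supp}(\bar\pi_t)$. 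A direct term-by-term computation then gives $\int\Phi\,d(\bar\nu_t-\bar\mu_t) = 1 + t\bigl[\nabla_i^+c(\eta,\gamma_i^+) - \nabla_i^+c(\eta,\gamma_i^-) + \sum_{\gamma\neq\gamma_i^\pm}|\nabla_i^+c(\eta,\gamma)|\bigr]$, the transverse sum collapsing to an absolute value via $\mathrm{sgn}(x) \cdot x = |x|$ and the bracket being precisely $-(\kappa^+(\eta,i)+\kappa^-(\eta,i))$. The main subtlety I foresee is the verification of the $1$-Lipschitz property of $\Phi$ at the distance-two shell and its Whitney-type extension to the rest of $\N^d$; an equivalent, perhaps cleaner, route is to produce directly a pair $(\phi,\psi)$ saturating the $W_p^p$ Kantorovich dual $\phi\oplus\psi \leq d^p$ on $\mathrm{supp}(\bar\pi_t)$, whose consistency is ensured by the dichotomy that for each $j \neq i$ at most one of the entries $\bar\pi_t(\gamma_j^\pm\eta,\eta)$ and $\bar\pi_t(\gamma_i^+\eta,\gamma_j^\pm\gamma_i^+\eta)$ is nonzero, the sign of $\nabla_i^+c(\eta,\gamma_j^\pm)$ determining which.
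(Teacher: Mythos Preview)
Your admissibility argument is essentially the paper's: a direct marginal check using the identity $\min\{a,b\}+\max\{a-b,0\}=a$ and the definitions of $\kappa^\pm$. One small omission: you should record that non-negativity of the residual entry $\bar\mu_t(\eta)-t\kappa^-(\eta,i)$ is precisely where the hypothesis ``$t$ small enough'' is used.

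For optimality your route is genuinely different. The paper argues via \emph{cyclical monotonicity}: having observed $d\leq 1$ on $\mathrm{supp}(\bar\pi_t)$, it checks by a short case analysis that no swap of two support pairs can lower the cost, i.e.\ that $d^p(\eta',\eta'')+d^p(\xi',\xi'')\leq d^p(\xi',\eta'')+d^p(\eta',\xi'')$ whenever $(\eta',\eta''),(\xi',\xi'')\in\mathrm{supp}(\bar\pi_t)$. This handles all $p\geq 1$ simultaneously and requires no auxiliary construction. You instead exploit the integer-valued metric to reduce to $p=1$ via $d^p\geq d$, and then certify the $W_1$ lower bound by Kantorovich--Rubinstein duality with an explicit $1$-Lipschitz potential $\Phi$ whose values encode the signs of $\nabla_i^+c(\eta,\gamma)$. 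Your computation $\int\Phi\,d(\bar\nu_t-\bar\mu_t)=1-t(\kappa^+(\eta,i)+\kappa^-(\eta,i))$ is correct and gives a pleasant conceptual explanation of that value as a dual quantity. The cost of your approach is the Lipschitz verification of $\Phi$ on $\mathrm{supp}(\bar\mu_t)\cup\mathrm{supp}(\bar\nu_t)$, which you flag but do not carry out; it does go through (the extremal case is pairs at graph distance $3$, where your $\Phi$-values differ by at most $3$), and McShane--Whitney then extends $\Phi$ to all of $\N^d$. So your argument can be completed, but the paper's cyclical-monotonicity route is shorter precisely because it avoids building and checking $\Phi$ altogether.
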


\begin{proof}
 To check that $\bar\pi_t$ is always non-negative, we remark that \ref{H3.3} implies $\kappa^{\pm}(\eta,i)\geq 0$ and that if $t$ is small enough we have $\bar\mu_{t}(\eta)-t\kappa^-(\eta,i)\geq 0$. It can be verified with a direct calculation that the marginals of $\bar\pi_t$ are $\bar\mu_t$ and $\bar\nu_t$ respectively. To show optimality, we show that the support $\mathrm{supp}(\bar\pi_t)$ of $\bar\pi_t$ is \emph{cyclically monotone}. That is to say, 
\be\label{eq: cyclical monotonicity}
(\eta',\eta''),(\xi',\xi'')\in \mathrm{supp}(\bar\pi_t) \Rightarrow d^p(\eta',\eta'')+d^p(\xi',\xi'') \leq d^p(\xi',\eta'')+d^p(\eta',\xi'').
\ee
Note that by construction, $d(\eta',\eta'')\leq 1$ on $\mathrm{supp}(\bar\pi_t)$. Therefore, if \eqref{eq: cyclical monotonicity} is violated we can w.l.o.g. suppose that $\eta'=\xi''$. This can happen only if $\eta'=\xi''=\eta$ or $\eta'=\xi''=\gamma^+_i\eta$. We show that in the first case \eqref{eq: cyclical monotonicity} is always satisfied. The proof for the second case is almost identical and we omit the details. Therefore, from now on we assume $\eta'=\xi''=\eta$. We observe that 

\begin{align*}
(\eta,\eta'')\in \mathrm{supp}(\bar{\pi})&\Rightarrow \eta'' \in \{\eta,\gamma^{+}_i\eta \}\\
(\xi',\eta)\in \mathrm{supp}(\bar{\pi})&\Rightarrow \xi' \in \{\eta\} \cup \{ \gamma\eta: \gamma\in G, \gamma\neq \gamma^+_{i} \}
\end{align*}
We verify case-by case that \eqref{eq: cyclical monotonicity} holds 

\bei 
\item\underline{$\eta''=\eta,\xi'=\eta$} In this case $d(\eta',\eta'')=d(\xi',\xi'')=d(\xi',\eta'')=d(\eta',\xi'')=0.$

\item \underline{$\eta''=\eta,\xi'=\gamma\eta$ with $\gamma\in G,\gamma\neq\gamma^{+}_i$} In this case 

\bes
d^p(\eta',\eta'')+d^{p}(\xi',\xi'')=d^p(\eta',\xi'')+d^{p}(\xi',\eta'')=d^{p}(\gamma\eta,\eta)=1.
\ees
\item \underline{$\eta''=\gamma^+_i\eta,\xi'=\eta$} In this case 
\bes
d^p(\eta',\eta'')+d^{p}(\xi',\xi'')=d^p(\eta',\xi'')+d^{p}(\xi',\eta'')=d^{p}(\eta,\gamma^+_i\eta)=1.
\ees
\item \underline{$\eta''=\gamma^+_i\eta, \xi'=\gamma\eta$ with $\gamma\in G,\gamma\neq\gamma^+_i$}. In this case $d(\xi',\eta'')=2$ and $d(\eta',\eta''),d(\xi',\xi'')\leq 1$ hold since $(\eta',\eta''),(\xi',\xi'')\in \mathrm{supp}(\bar\pi_t)$.
\eei
Therefore, \eqref{eq: cyclical monotonicity} is never violated and $\bar{\pi}_t$ is optimal for $W_p(\bar\mu_t,\bar\nu_t)$. 

\end{proof}

\section{Glauber dynamics}\label{sec: spin}

In this section, we investigate \eqref{convex sob} for the Glauber dynamics. In view of the applications to classical spin systems such as Curie Weiss or the Ising model, we assume that the moves $\sigma\in G$ are involutions, i.e. $\sigma^{-1}=\sigma$. However, this is not strictly necessary for our method to work. Given an inverse temperature parameter $\beta>0$ and  an Hamiltonian $H:\Omega\longrightarrow\R$ we construct a generator of the form \eqref{def: Markov gen} by setting 

\be\label{eq: spin jump rates}
c(\eta,\sigma) = \exp\Big(-\frac{\beta}{2}\nabla_{\sigma}H(\eta)\Big).
\ee
If the state space $\Omega$ is finite, \ref{H0} and \ref{H1} are satisfied and the reversible measure is the Gibbs measure

\be\label{eq: Gibbs measure}
\inv(\eta)=\frac{1}{Z_{\beta}}\exp(-\beta H(\eta)), \quad \forall \eta\in\Omega,
\ee
where $Z_{\beta}$ is the normalization. Let us now state precisely the assumptions needed for the main result of this section which is Theorem \ref{thm: spin systems} below.

\begin{itemize}
\item[\namedlabel{H3.4}{\textbf{(H3.4)}}] The set of moves $G$ is such that 

\be\label{hyp: involution}
\sigma^{-1}=\sigma, \quad \forall \sigma \in G,
\ee
and the relation
\be\label{hyp: commute}
\sigma\gamma\eta=\gamma\sigma\eta
\ee
holds uniformly on $\eta\in\Omega,\sigma,\gamma\in G$.

\item[\namedlabel{H4.4}{\textbf{(H4.4)}}] $\kappa(\eta,\sigma)\geq 0$ uniformly on $\eta\in\Omega,\sigma\in G$, where

\be\label{kappaspin}
\kappa(\eta,\sigma):= c(\sigma\eta,\sigma) - \sum_{\substack{\gamma\in G \\ \gamma\neq\sigma}} \max\{-\nabla_{\sigma}c(\eta,\gamma),0\}. 
\ee
\end{itemize}
\subsection{Sufficient condition for Glauber dynamics}

\begin{theorem}\label{thm: spin systems}
Let $\beta>0$, $H:\N^d\longrightarrow\R$ be given and the generator $\cL$ be defined by \eqref{eq: spin jump rates}. Moreover, assume that \ref{H3.4} and \ref{H4.4} hold. If we define 
\be\label{ass: kappa spin}
\kappa= \inf_{\substack{\eta\in\Omega \\ \sigma\in G}}   \kappa(\eta,\sigma)+\kappa(\sigma\eta,\sigma), \quad \bar{\kappa} = \inf_{\substack{\eta\in\Omega \\ \sigma\in G}}   \kappa(\eta,\sigma)
\ee

then the following holds 

 \item[(i)] For any $\phi$ satisfying \ref{H2} the convex Sobolev inequality \eqref{eq: convex sob rev} holds with with $\kappa_{\phi}=\kappa$. 
  \item[(ii)] The modified Logarithmic Sobolev inequality \eqref{log sob} holds with $\kappa_1=\kappa+2\bar{\kappa}$. 
   \item[(iii)] For $\alpha \in (1,2]$, the Beckner inequality \eqref{beckner ineq} holds with $\kappa_{\alpha}=\alpha\kappa$.
\end{theorem}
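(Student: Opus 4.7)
The plan is to mimic the proof of Theorem \ref{thm: LSI int random walks}: produce coupling rates that keep the two coupled chains at graph distance at most one, then invoke Proposition \ref{cor: sufficient cond}. For each $(\eta,\sigma)\in\cS$ (recall $\sigma^{-1}=\sigma$ by \ref{H3.4}) I propose
\begin{equation*}
\couprate(\eta,\sigma\eta,\gamma,\bgamma) = \begin{cases}
\min\{c(\eta,\gamma),c(\sigma\eta,\gamma)\}, & \gamma=\bgamma\in G,\ \gamma\neq\sigma,\\
\max\{\nabla_\sigma c(\eta,\bgamma),0\}, & \gamma=\sigma,\ \bgamma\in G\setminus\{\sigma\},\\
\max\{-\nabla_\sigma c(\eta,\gamma),0\}, & \gamma\in G\setminus\{\sigma\},\ \bgamma=\sigma,\\
\kappa(\sigma\eta,\sigma), & \gamma=\sigma,\ \bgamma=\zero,\\
\kappa(\eta,\sigma), & \gamma=\zero,\ \bgamma=\sigma,\\
0, & \text{otherwise.}
\end{cases}
\end{equation*}
Admissibility follows from \ref{H4.4} together with the involution identity $\nabla_\sigma c(\sigma\eta,\cdot)=-\nabla_\sigma c(\eta,\cdot)$. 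The meeting rates $\couprate(\eta,\sigma\eta,\sigma,\zero)=\kappa(\sigma\eta,\sigma)$ and $\couprate(\eta,\sigma\eta,\zero,\sigma)=\kappa(\eta,\sigma)$ are both bounded below by $\bar\kappa$, giving \eqref{def: kappa''} with $\kappa''=\bar\kappa$; since each of these two transitions sends $(\eta,\sigma\eta)$ onto the diagonal, their sum yields \eqref{def: kappa'''} with $\kappa'''=\kappa$.

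The core step is to establish the gradient bound \eqref{eq: discrete gradient estimate} with $\kappa'=\kappa$. The Gibbs form of $c$ together with the commutation hypothesis \eqref{hyp: commute} yield the consecutive detailed balance identity
\begin{equation*}
c(\eta,\sigma)\,c(\sigma\eta,\gamma)=c(\eta,\gamma)\,c(\gamma\eta,\sigma),\qquad \forall\,\eta\in\Omega,\ \sigma,\gamma\in G,
\end{equation*}
which in turn produces the two symmetries $c(\eta,\sigma)\min\{c(\eta,\gamma),c(\sigma\eta,\gamma)\}=c(\eta,\gamma)\min\{c(\eta,\sigma),c(\gamma\eta,\sigma)\}$ and $c(\eta,\sigma)\max\{\pm\nabla_\sigma c(\eta,\gamma),0\}=c(\eta,\gamma)\max\{\pm\nabla_\gamma c(\eta,\sigma),0\}$. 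Expanding the left-hand side of \eqref{eq: discrete gradient estimate} according to the five non-zero cases of $\couprate$, each case splits into a positive term (from $\Phi$ evaluated after the two moves) and a negative term proportional to $\Phi(f(\eta),f(\sigma\eta))$. For the parallel case $(\gamma,\gamma)$ with $\gamma\neq\sigma$, the first symmetry together with reversibility for $\gamma$ and \eqref{hyp: commute} matches the positive contribution exactly to the opposite of its negative counterpart, mirroring Step~1 in the proof of Theorem \ref{thm: LSI int random walks}. For each compensation case $(\sigma,\bgamma)$ or $(\gamma,\sigma)$, an application of $\sigma$-reversibility combined with the symmetry of $\Phi$ rewrites the positive contribution as $\tfrac12\sum c(\eta,\sigma)\max\{-\nabla_\sigma c(\eta,\bgamma),0\}\Phi(f(\eta),f(\bgamma\eta))\inv(\eta)$; applying the second symmetry to the matching negative term and then swapping the summation labels $\sigma\leftrightarrow\gamma$, which is legitimate because the index set $\{(\sigma,\gamma)\in G\times G:\sigma\neq\gamma\}$ is invariant under this swap, yields the opposite of the transformed positive term, and the two cases cancel as well.

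Only the meeting contributions survive, and by the definition \eqref{ass: kappa spin} of $\kappa$ together with the non-negativity of $\Phi$ they satisfy
\begin{equation*}
-\tfrac12\sum_{(\eta,\sigma)\in\cS}c(\eta,\sigma)\bigl[\kappa(\eta,\sigma)+\kappa(\sigma\eta,\sigma)\bigr]\Phi(f(\eta),f(\sigma\eta))\,\inv(\eta)\;\leq\;-\kappa\,\cE(\phi'(f),f),
\end{equation*}
so \eqref{eq: discrete gradient estimate} holds with $\kappa'=\kappa$. Feeding $\kappa'=\kappa$, $\kappa''=\bar\kappa$ and $\kappa'''=\kappa$ into Proposition \ref{cor: sufficient cond} yields (i), (ii) and (iii). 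The most delicate point I anticipate is the cancellation of the compensation cases: the relabelling $\sigma\leftrightarrow\gamma$ works only because the product $c(\eta,\sigma)\max\{\pm\nabla_\sigma c(\eta,\gamma),0\}$ is symmetric in the pair $(\sigma,\gamma)$, and the commutation hypothesis \eqref{hyp: commute} is precisely what guarantees this symmetry through the consecutive detailed balance identity; any deviation from it would break the relabelling argument.
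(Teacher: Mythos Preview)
Your proposal is correct and follows essentially the same route as the paper. Your coupling rates are identical to the paper's \eqref{eq: spin coup rates} (you write $\max\{\pm\nabla_\sigma c,0\}$ where the paper uses the index sets $\Upsilon^{\lessgtr}(\eta)$), and the verification of \eqref{eq: discrete gradient estimate} with $\kappa'=\kappa$ proceeds by the same three cancellations: the parallel block (the paper's $A$, handled there in Lemma~\ref{lem cancel spin}), and the two compensation blocks (the paper's $B$ and $C$), each shown to vanish via the identity $c(\eta,\sigma)\nabla_\sigma c(\eta,\gamma)=c(\eta,\gamma)\nabla_\gamma c(\eta,\sigma)$ and the swap $\sigma\leftrightarrow\gamma$. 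Your identification of $\kappa''=\bar\kappa$ and $\kappa'''=\kappa$ matches the paper, and the conclusion via Proposition~\ref{cor: sufficient cond} is the same.

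One small presentational point: your sentence on the compensation cases reads as if a single uniform manipulation handles both $(\sigma,\bgamma)$ and $(\gamma,\sigma)$, but the two need slightly different bookkeeping. For $(\gamma,\sigma)$ the positive part is already $\tfrac12\sum c(\eta,\sigma)\max\{-\nabla_\sigma c(\eta,\gamma),0\}\Phi(f(\eta),f(\gamma\eta))\inv(\eta)$ because $\sigma\sigma\eta=\eta$, and the swap plus the second symmetry kills it against its own negative part; for $(\sigma,\bgamma)$ you must apply $\sigma$-reversibility to the \emph{entire} block (positive and negative parts together) before swapping, otherwise the negative part retains $\max\{+\nabla_\sigma c,0\}$ and does not match. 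The paper makes this explicit by first transforming all of $C$ via \eqref{eq rev} and only then symmetrizing. This is a minor imprecision in your write-up, not a gap in the argument.
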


The assumptions and the proof of Theorems \ref{thm: LSI int random walks} and \ref{thm: spin systems} bear several resemblances. We could indeed merge them in a single general result. However, for the sake of clarity we prefer to keep the two results distinct.

\begin{proof} The proof is done verifying that the assumptions of Proposition \ref{cor: sufficient cond} hold with $\kappa'=\kappa'''=\kappa$ and $\kappa''=\bar\kappa$ for the coupling rates we are going to construct. The proof is then finished applying the same proposition. For any $\eta\in\Omega$ we define $\Upsilon^{<}(\eta),\Upsilon^{>}(\eta),\Upsilon^{=}(\eta)$ as follows
\begin{align*}
&\Upsilon^{<}(\eta)= \{ (\sigma,\gamma)\in G\times G: \sigma\neq\gamma, \nabla_{\sigma}c(\eta,\gamma)<0 \}\\
&\Upsilon^{>}(\eta)= \{ (\sigma,\gamma)\in G\times G: \sigma\neq\gamma,\nabla_{\sigma}c(\eta,\gamma)>0 \}\\
&\Upsilon^{=}(\eta)= \{ (\sigma,\gamma)\in G\times G: \sigma\neq\gamma, \nabla_{\sigma}c(\eta,\gamma)=0 \}.
\end{align*}

We remark that under the current hypothesis we have $\cS=\Omega\times G$. For any $\eta\in\Omega,\sigma\in G$ we define

\be\label{eq: spin coup rates}
\couprate(\eta,\sigma\eta,\gamma,\bgamma)=\begin{cases}
\min\{c(\sigma \eta,\gamma),c(\eta,\gamma)\}, & \quad \mbox{ if $\gamma=\bgamma$ and $\sigma\neq\gamma, \gamma\in G$,}\\
-\nabla_{\sigma}c(\eta,\gamma), & \quad \mbox{if $\bgamma=\sigma$ and $(\sigma,\gamma)\in \Upsilon^<(\eta)$,}\\
\nabla_{\sigma}c(\eta,\bgamma), & \quad \mbox{if $\gamma=\sigma$ and $(\sigma,\bgamma)\in \Upsilon^>(\eta)$,}\\
\kappa(\sigma\eta,\sigma), & \quad \mbox{if $\gamma=\sigma,\bgamma=\zero$,}\\
\kappa(\eta,\sigma), & \quad \mbox{if $\gamma=\zero,\bgamma=\sigma$,}\\
0,& \quad \mbox{otherwise.}\\
\end{cases}
\ee
A direct calculation  using \ref{H3.4} and \ref{H4.4} shows that \eqref{eq: spin coup rates} define admissible coupling rates. In particular, \ref{H4.4} ensures that $\couprate(\eta,\sigma\eta,\sigma,\zero)$ and $\couprate(\eta,\sigma\eta,\zero,\sigma)$ are non negative. We now show that \eqref{eq: discrete gradient estimate} holds with $\kappa'=\kappa$. To this aim observe that for any  $f>0$ and $\phi$ satisfying \ref{H2} the choice \eqref{eq: spin coup rates} give that the left hand side of \eqref{eq: discrete gradient estimate} rewrites as
$\frac{1}{2}(A+B+C+D)$ with

\begin{align*}
A &= \sum_{\substack{\eta\in\Omega, \sigma,\gamma\in G \\ \sigma\neq\gamma}} c(\eta,\sigma)\min\{c(\eta,\gamma),c(\sigma\eta,\gamma)\} \coupgrad_{\gamma,\gamma} \fphi(\eta,\sigma\eta)\inv(\eta), \\
B &= -\sum_{\substack{\eta\in\Omega\\ (\sigma,\gamma)\in\Upsilon^<(\eta)}} c(\eta,\sigma)\nabla_{\sigma}c(\eta,\gamma) \coupgrad_{\gamma,\sigma} \fphi(\eta,\sigma\eta)\inv(\eta), \\
C &= \sum_{\substack{\eta\in\Omega \\(\sigma,\bgamma)\in\Upsilon^>(\eta)}} c(\eta,\sigma)\nabla_{\sigma}c(\eta,\bgamma) \coupgrad_{\sigma,\bgamma} \fphi(\eta,\sigma\eta)\inv(\eta), \\
D &= \sum_{\eta\in\Omega,\sigma\in G} c(\eta,\sigma)[\kappa(\sigma\eta,\sigma)\coupgrad_{\sigma,\zero}\fphi(\eta,\sigma\eta)+\kappa(\eta,\sigma) \coupgrad_{\zero,\sigma}\fphi(\eta,\sigma\eta)]\inv(\eta).
\end{align*}
We now show that $A=B=C=0$. We begin by considering $B$. Using \ref{H3.4} and \eqref{eq: spin jump rates} we get that for all $\sigma\neq\gamma$
\bes
(\sigma,\gamma)\in\Upsilon^<(\eta) \Leftrightarrow \nabla_{\sigma}\nabla_{\gamma} H(\eta) > 0\Leftrightarrow (\gamma,\sigma)\in\Upsilon^<(\eta).
\ees
Therefore we can rewrite $B$ exchanging the labels $\sigma$ and $\gamma$ as
\bes
  - \frac{1}{2} \sum_{\substack{\eta\in\Omega\\ (\sigma,\gamma)\in\Upsilon^<(\eta)}} [c(\eta,\sigma)\nabla_{\sigma}c(\eta,\gamma)-c(\eta,\gamma)\nabla_{\gamma}c(\eta,\sigma)] [\fphi(\eta,\gamma\eta)-\fphi(\eta,\sigma\eta)]\inv(\eta)
\ees
from which $B=0$ follows. Indeed, \eqref{eq: spin jump rates} implies that $c(\eta,\sigma)\nabla_{\sigma}c(\eta,\gamma)=c(\eta,\gamma)\nabla_{\gamma}c(\eta,\sigma)$ for all $\eta,\gamma,\sigma$.
Using \eqref{eq rev} on $C$ with 
\[ F(\eta,\sigma)=\sum_{\bgamma:(\sigma,\bgamma)\in\Upsilon^>(\eta)} \nabla_{\sigma}c(\eta,\bgamma) \coupgrad_{\sigma,\bgamma} \fphi(\eta,\sigma\eta) \]
and \ref{H3.4} yields the equivalent expression
\bes
 -\sum_{\substack{\eta\in\Omega\\(\sigma,\bgamma)\in \Upsilon^{>}(\eta)}} c(\eta,\sigma)\nabla_{\sigma}c(\eta,\bgamma) [\fphi(\eta,\bgamma\eta)-\fphi(\eta,\sigma\eta)]\inv(\eta).
\ees
Using the same argument used to show $B=0$, we conclude that $C=0$. The proof that $A=0$ is done in the auxiliary Lemma \ref{lem cancel spin} and follows from reversibility. Finally, recalling that $\fphi$ vanishes on the diagonal we easily get that 
$D\leq -2\kappa\cE(\phi'(f),f)$, which completes the proof that \eqref{eq: discrete gradient estimate} holds with $\kappa'=\kappa.$  From the construction of the coupling rates we also have that \eqref{def: kappa''} holds with $\kappa''=\bar\kappa$ and \eqref{def: kappa'''} holds with $\kappa'''=\kappa$. An application of Proposition \ref{cor: sufficient cond} finishes the proof.
\end{proof}

\begin{lemma}\label{lem cancel spin}

Under the hypothesis of Theorem \ref{thm: spin systems} we have
\be\label{eq: cancel spin}
\sum_{\substack{\eta\in\Omega, \sigma,\gamma\in G \\ \sigma\neq\gamma}} c(\eta,\sigma)\min\{c(\eta,\gamma),c(\sigma\eta,\gamma)\} \coupgrad_{\gamma,\gamma} \fphi(\eta,\sigma\eta)\inv(\eta)=0
\ee
holds for all $f>0$.
\end{lemma}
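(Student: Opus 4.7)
The plan is to use reversibility \eqref{eq rev} with the move $\gamma$, after first rewriting the summand to isolate a factor of $c(\eta,\gamma)\inv(\eta)$. The key algebraic observation is that the exponential form of the rates \eqref{eq: spin jump rates} together with the commutativity \eqref{hyp: commute} yields
\be\label{eq:key-cycle}
c(\eta,\sigma)\,c(\sigma\eta,\gamma) \;=\; c(\eta,\gamma)\,c(\gamma\eta,\sigma),
\ee
since both products equal $\exp\bigl(-\tfrac{\beta}{2}(H(\sigma\gamma\eta)-H(\eta))\bigr)$ by telescoping $H$ along the two paths around the 4-cycle $\eta\to\sigma\eta\to\sigma\gamma\eta=\gamma\sigma\eta\to\gamma\eta\to\eta$. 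Since $c(\eta,\sigma)>0$ can be pulled in and out of the minimum, \eqref{eq:key-cycle} yields the pointwise identity
\bes
c(\eta,\sigma)\min\{c(\eta,\gamma),c(\sigma\eta,\gamma)\} \;=\; c(\eta,\gamma)\min\{c(\eta,\sigma),c(\gamma\eta,\sigma)\}.
\ees

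Substituting this into \eqref{eq: cancel spin} and expanding $\coupgrad_{\gamma,\gamma}\fphi(\eta,\sigma\eta)=\fphi(\gamma\eta,\sigma\gamma\eta)-\fphi(\eta,\sigma\eta)$ (again using $\gamma\sigma\eta=\sigma\gamma\eta$), the left-hand side of \eqref{eq: cancel spin} becomes
\bes
\sum_{\gamma\in G}\sum_{\substack{\sigma\in G\\\sigma\neq\gamma}}\sum_{\eta\in\Omega} c(\eta,\gamma)\min\{c(\eta,\sigma),c(\gamma\eta,\sigma)\}\bigl[\fphi(\gamma\eta,\sigma\gamma\eta)-\fphi(\eta,\sigma\eta)\bigr]\inv(\eta).
\ees
For each fixed pair $(\sigma,\gamma)$ with $\sigma\neq\gamma$, reversibility \eqref{eq rev} applied with the move $\gamma$ (self-inverse by \ref{H3.4}) implements the substitution $\eta\mapsto\gamma\eta$ inside any factor multiplying $c(\eta,\gamma)\inv(\eta)$. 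Applied to $G(\eta):=\min\{c(\eta,\sigma),c(\gamma\eta,\sigma)\}\fphi(\gamma\eta,\sigma\gamma\eta)$, and using $\gamma^2=\mathrm{id}$ together with $\sigma\gamma^2\eta=\sigma\eta$, one obtains $G(\gamma\eta)=\min\{c(\gamma\eta,\sigma),c(\eta,\sigma)\}\fphi(\eta,\sigma\eta)$. The positive half of the bracket is thereby rewritten identically to the negative half, and the whole inner sum vanishes.

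The only delicate point is spotting the reformulation via \eqref{eq:key-cycle}; once it is in place the argument reduces to a single application of reversibility combined with the involution and commutativity assumptions. I do not anticipate any additional obstacle.
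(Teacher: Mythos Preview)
Your argument is correct, and in fact cleaner than the paper's. The paper proves \eqref{eq: cancel spin} by splitting the sum according to the sign of $\nabla_{\sigma}c(\eta,\gamma)$ into three pieces indexed by $\Upsilon^{<}(\eta)$, $\Upsilon^{>}(\eta)$, $\Upsilon^{=}(\eta)$; it then applies reversibility in $\gamma$ to the $\Upsilon^{>}$ piece and uses the observation $(\sigma,\gamma)\in\Upsilon^{>}(\gamma\eta)\Leftrightarrow(\sigma,\gamma)\in\Upsilon^{<}(\eta)$ (coming from $\nabla_{\gamma}\nabla_{\sigma}H(\gamma\eta)=-\nabla_{\gamma}\nabla_{\sigma}H(\eta)$) to cancel it against the $\Upsilon^{<}$ piece, with the $\Upsilon^{=}$ piece handled analogously.

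Your route avoids all case analysis via the pointwise identity
\[
c(\eta,\sigma)\min\{c(\eta,\gamma),c(\sigma\eta,\gamma)\}=c(\eta,\gamma)\min\{c(\eta,\sigma),c(\gamma\eta,\sigma)\},
\]
which follows by pulling the positive factor inside the minimum and invoking the Kolmogorov cycle relation $c(\eta,\sigma)c(\sigma\eta,\gamma)=c(\eta,\gamma)c(\gamma\eta,\sigma)$. This makes the weight $\min\{c(\eta,\sigma),c(\gamma\eta,\sigma)\}$ manifestly invariant under $\eta\mapsto\gamma\eta$, so a single application of reversibility in $\gamma$ (using $\gamma^{-1}=\gamma$) flips $\fphi(\gamma\eta,\sigma\gamma\eta)$ into $\fphi(\eta,\sigma\eta)$ and the sum collapses. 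In effect, your identity absorbs the paper's sign bookkeeping; both proofs rest on the same two ingredients (the exponential form of the rates and the commutation/involution hypotheses of \ref{H3.4}), but yours packages them more economically.
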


\begin{proof}
Recalling the definition of $\Upsilon^<(\eta),\Upsilon^=(\eta),\Upsilon^>(\eta)$, we rewrite \eqref{eq: cancel spin} as the sum of the three terms 

\begin{align}\label{eq: cancel spin split up}
&\nonumber \sum_{\substack{\eta\in\Omega \\ (\sigma,\gamma)\in\Upsilon^{<}(\eta)}} c(\eta,\sigma)c(\sigma\eta,\gamma) \coupgrad_{\gamma,\gamma} \fphi(\eta,\sigma\eta)\inv(\eta) \\
&\nonumber \sum_{\substack{\eta\in\Omega \\ (\sigma,\gamma)\in\Upsilon^{>}(\eta)}}  c(\eta,\sigma)c(\eta,\gamma) \coupgrad_{\gamma,\gamma} \fphi(\eta,\sigma\eta)\inv(\eta)\\
&\frac{1}{2}\sum_{\substack{\eta\in\Omega \\ (\sigma,\gamma)\in\Upsilon^{=}(\eta)}} c(\eta,\sigma)[c(\eta,\gamma)+c(\sigma\eta,\gamma)] \coupgrad_{\gamma,\gamma} \fphi(\eta,\sigma\eta)\inv(\eta).
\end{align}

Using the reversibility \eqref{eq rev} and \ref{H3.4} on the second term with with

\bes
F(\eta,\gamma)=\sum_{\sigma: (\sigma,\gamma)\in\Upsilon^{>}(\eta)} c(\eta,\sigma)\coupgrad_{\gamma,\gamma} \fphi(\eta,\sigma\eta)
\ees

yields the equivalent form

\begin{align}\label{eq: spin cancel rev}
-\sum_{\substack{\eta\in\Omega \\ (\sigma,\gamma)\in\Upsilon^{>}(\gamma\eta)}}  c(\eta,\gamma)c(\gamma\eta,\sigma) \coupgrad_{\gamma,\gamma} \fphi(\eta,\sigma\eta)\inv(\eta)
\end{align}
Next observe that, thanks to \eqref{eq: spin jump rates} and \ref{H3.4} we obtain
\bes
(\sigma,\gamma)\in\Upsilon^>(\gamma\eta)\Leftrightarrow\nabla_\gamma\nabla_\sigma H(\gamma\eta)>0\Leftrightarrow\nabla_\gamma\nabla_\sigma H(\eta)<0\Leftrightarrow(\sigma,\gamma) \in \Upsilon^<(\eta).
\ees
Plugging this back into \eqref{eq: spin cancel rev}, we obtain that the first two terms in \eqref{eq: cancel spin split up} cancel. Arguing as for the first two terms and observing that $\Upsilon^=(\eta)=\Upsilon^=(\gamma\eta)$ we obtain tha the third term is also worth $0$.
\end{proof}

\subsection{Applications to spin systems}\label{sec: spin examples}

\subsubsection{Curie-Weiss model}\label{sec: curie}

For the Curie-Weiss model we have $\Omega=\{1,1\}^N$ for some $N>0$ and the set of moves is $G=\{\sigma_i\}_{i=1,\ldots,N}$. $\sigma_i$ acts on $\eta$ flipping its $i-th$ coordinate, i.e.

\be\label{def: flip moves}
\sigma_i(\eta)_j=\begin{cases} \eta_j, & \quad \mbox{if $j\neq i$} \\
-\eta_j, & \quad \mbox{if $j= i$.}
\end{cases}
\ee
The Hamiltonian is given by
\bes
H:\{-1,1\}^N\rightarrow\R,\quad  H(\eta)= -\frac{1}{2N} \sum_{i,j=1}^N \eta_i\eta_j.
\ees
For a given $\beta>0$, the transition rates of the Glauber dynamics are then given by 

\be\label{eq: curie rates}
c(\eta,\sigma_i)= \exp\big(-\beta \eta_{i}\, m_i(\eta)\big), \quad \text{with} \,\, m_i(\eta)= \frac{1}{N} \sum_{j\neq i} \eta_j.
\ee

To state our result for the Curie-Weiss model, it is convenient to introduce $f_{CW,\beta,N}:\N\rightarrow\R$

\beas
f_{\mathrm{CW},\beta,N}(m) :=  \exp\Big( -\frac{\beta}{N}(N-1-2m) \Big)[1-(N-1-m)(\exp\Big(\frac{2\beta}{N}\Big)-1)] \\
+  \exp\Big( \frac{\beta}{N}(N-1-2m) \Big)[1-m(\exp\Big(\frac{2\beta}{N}\Big)-1)]
\eeas

\begin{theorem}
Assume that
\be\label{hyp: curie condition} (N-1)(\exp(2\beta/N)-1)\leq 1.
\ee

Then the conclusion of Theorem \ref{thm: spin systems} holds with 

\begin{align}\label{eq: curie kappa}\nonumber\kappa&=  f_{\mathrm{CW},\beta,N}(\lfloor(N-1)/2 \rfloor),\\
\bar{\kappa}&=\exp(-\frac{\beta}{N}(N-1))[1-(N-1)(1-\exp(2\beta/N))].
\end{align}
In particular, if $N$ is odd, we have $f_{CW,\beta,N}(\lfloor (N-1)/2 \rfloor) = 2\left(1-\frac{N-1}{2}(\exp(2\beta/N)-1)\right)$.
\end{theorem}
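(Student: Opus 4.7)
The strategy is to apply Theorem \ref{thm: spin systems} by verifying \ref{H3.4} and \ref{H4.4} and then computing $\kappa$ and $\bar{\kappa}$ in closed form. Assumption \ref{H3.4} is immediate: each $\sigma_i$ is its own inverse, and flipping different coordinates commutes (while $\sigma_i\sigma_i=\mathrm{id}$). To analyse $\kappa(\eta,\sigma_i)$, I would start from the observation that for $j\neq i$,
\[ m_j(\sigma_i\eta) = m_j(\eta)-\frac{2\eta_i}{N}, \]
which by \eqref{eq: curie rates} yields the key multiplicative identity
\[ c(\sigma_i\eta,\sigma_j)=c(\eta,\sigma_j)\exp\!\Big(\frac{2\beta \eta_i\eta_j}{N}\Big), \qquad \nabla_{\sigma_i}c(\eta,\sigma_j)=c(\eta,\sigma_j)\!\left(\exp\!\Big(\frac{2\beta\eta_i\eta_j}{N}\Big)-1\right). \]
Hence $-\nabla_{\sigma_i}c(\eta,\sigma_j)$ is positive precisely when $\eta_i\eta_j=-1$, in which case it equals $c(\eta,\sigma_j)(1-\exp(-2\beta/N))$.

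Next, I would parametrize the local configuration by $m:=|\{j\neq i:\eta_j=+1\}|$ and the sign $\eta_i\in\{\pm1\}$. A direct computation shows that for every index $j\neq i$ with $\eta_j=-\eta_i$, the quantity $c(\eta,\sigma_j)$ takes the common value $c(\sigma_i\eta,\sigma_i)\exp(2\beta/N)$. Summing over such $j$ (there are $N-1-m$ of them when $\eta_i=+1$ and $m$ of them when $\eta_i=-1$) gives the closed form
\begin{align*}
\kappa(\eta,\sigma_i)&=\exp\!\big(-\tfrac{\beta}{N}(N-1-2m)\big)\big[1-(N-1-m)(\exp(2\beta/N)-1)\big], && \eta_i=+1,\\
\kappa(\eta,\sigma_i)&=\exp\!\big(\tfrac{\beta}{N}(N-1-2m)\big)\big[1-m(\exp(2\beta/N)-1)\big], && \eta_i=-1.
\end{align*}
Adding the two (since $\sigma_i\eta$ has the same $m$ but opposite $\eta_i$) reproduces exactly $f_{\mathrm{CW},\beta,N}(m)$.

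With these formulas in hand, both infima in \eqref{ass: kappa spin} become one-dimensional minimization problems over $m\in\{0,\dots,N-1\}$. For $\bar{\kappa}$, each of the two bracketed factors above is affine and decreasing as its coefficient $m$ or $N-1-m$ grows, and an elementary monotonicity check (using $(N-1)(e^{2\beta/N}-1)\le1$) shows the single-case expression is minimized at the extremal configurations $m=0$ (for $\eta_i=+1$) or $m=N-1$ (for $\eta_i=-1$), yielding the claimed value of $\bar{\kappa}$; nonnegativity there is exactly the standing assumption \eqref{hyp: curie condition}, which also verifies \ref{H4.4}.

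For $\kappa$, the main point is to show that the symmetric function $f_{\mathrm{CW},\beta,N}(m)=f_{\mathrm{CW},\beta,N}(N-1-m)$ attains its minimum over integers at $\lfloor(N-1)/2\rfloor$. The clean way is to set $x=N-1-2m$, $\alpha=\beta/N$, $a=e^{2\alpha}-1$ and rewrite
\[ f_{\mathrm{CW},\beta,N}(m)=2\!\left(1-\tfrac{(N-1)a}{2}\right)\cosh(\alpha x)+xa\sinh(\alpha x), \]
which is even in $x$. Differentiating twice one obtains
\[ f''(x)=\cosh(\alpha x)\!\left[2\alpha^{2}+a\alpha(2-\alpha(N-1))\right]+\alpha^{2}xa\sinh(\alpha x). \]
The elementary inequality $e^{2\alpha}-1\ge 2\alpha$ combined with the assumption gives $\alpha(N-1)\le(N-1)a/2\le 1/2<2$, so the bracket is positive, and both summands are nonnegative for $x\ge0$; hence $f$ is convex and even, and is minimized on $\Z$ at $x=0$ or $x=\pm1$, i.e.\ at $m=\lfloor(N-1)/2\rfloor$. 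The value at this point is \eqref{eq: curie kappa}, which for odd $N$ reduces to $2(1-\frac{N-1}{2}(e^{2\beta/N}-1))$ since $x=0$ is then attained. The hard part is really this convexity step; everything else is bookkeeping driven by the multiplicative identity for $\nabla_{\sigma_i}c(\eta,\sigma_j)$.
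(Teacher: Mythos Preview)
Your proof is correct and follows the same approach as the paper: compute the ratio $c(\sigma_i\eta,\sigma_j)/c(\eta,\sigma_j)$, derive the closed form for $\kappa(\eta,\sigma_i)$ in terms of a single integer parameter, and minimize. Your parametrization $m=|\{j\neq i:\eta_j=+1\}|$ differs from the paper's $m=|\{j\neq i:\eta_i\eta_j=1\}|$, but the two are related by $m\leftrightarrow N-1-m$ when $\eta_i=-1$, and since $f_{\mathrm{CW},\beta,N}$ is symmetric this is immaterial.

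The one place you go beyond the paper is the minimization of $f_{\mathrm{CW},\beta,N}$: the paper simply asserts that this function is convex on $[0,N-1]$ and symmetric about $(N-1)/2$, whereas you supply an explicit second-derivative computation via the substitution $x=N-1-2m$ and the $\cosh/\sinh$ rewriting. Your argument is correct (note that $x\sinh(\alpha x)\ge0$ for all $x$, not just $x\ge0$, so convexity holds on the whole line), and it fills a gap the paper leaves to the reader. For $\bar\kappa$ both you and the paper are terse; your monotonicity claim is right because in your $\eta_i=+1$ formula both the exponential factor and the bracket are increasing in $m$ (the bracket being nonnegative under \eqref{hyp: curie condition}), so the product is minimized at $m=0$, with the symmetric statement for $\eta_i=-1$. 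Note also that the stated value of $\bar\kappa$ in \eqref{eq: curie kappa} carries a sign typo: both your computation and the paper's proof yield $\exp(-\tfrac{\beta}{N}(N-1))[1-(N-1)(\exp(2\beta/N)-1)]$, not $[1-(N-1)(1-\exp(2\beta/N))]$.
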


\begin{remark}
As $N\rightarrow+\infty$, our condition \eqref{hyp: curie condition} reads as $\beta\leq 1/2$, thus improving on \cite[Cor 4.5]{erbar2017ricci}. Estimates on the best constant for the classical (i.e. non modified) logarithmic Sobolev inequality that are valid up to the critical temperature $\beta=1$ have been obtained in \cite{marton2015logarithmic}. For $N$ large the MLSI constant we obtain from Theorem \ref{thm: spin systems} is $2(1-\beta)+2(1-2\beta)\exp(-\beta)$ improving on the value $4(1-2\beta\exp(2\beta))\exp(-\beta)$ found in \cite{erbar2017ricci}. The findings of Theorem 4.1 concerning general convex Sobolev inequalities and Beckner's inequalities seem to be new.
\end{remark}

\begin{proof}
We obtain from \eqref{eq: curie rates} that for all $\eta,i,j\neq i$:
\bes
\frac{c(\sigma_i\eta,\sigma_j)}{c(\eta,\sigma_j)}= \exp\big(\frac{2\beta \eta_i\eta_j}{N}\big)
\ees

Moreover, if $|\{ j\neq i: \eta_i\eta_j=1\}|=m$ we have

\bes
c(\sigma_i\eta,\sigma_i) = \exp\big( -\frac{\beta}{N}(N-1-2m) \big)
\ees
Therefore
\bes
\kappa(\eta,\sigma_i) = \exp\Big( -\frac{\beta}{N}(N-1-2m) \Big)- \sum_{\substack{j:\eta_j \eta_i=-1\\j\neq i}} (\exp\big(\frac{2\beta}{N}\big)-1)c(\sigma_i\eta,\sigma_j)
\ees
Next, we observe that if $\eta_i\eta_j=-1$, we have that $c(\sigma_i\eta,\sigma_j)=c(\sigma_i\eta,\sigma_i)$. Since there are $N-1-m$ spins of this type, we obtain
\be\label{eq: kappa bound curie}
\kappa(\eta,\sigma_i) =\exp\Big( -\frac{\beta}{N}(N-1-2m) \Big) \Big[ 1-(N-1-m)(\exp\big(\frac{2\beta}{N}\big)-1) \Big]
\ee
In particular, we obtain that \eqref{hyp: curie condition} implies \eqref{kappaspin} and we can apply Theorem \ref{thm: spin systems}. From \eqref{eq: kappa bound curie} we also obtain that $\bar\kappa$ therein is given by \eqref{eq: curie kappa}. To finish the proof, observe that 
\begin{align*}
\kappa(\eta,\sigma_i) + \kappa(\sigma_i\eta,\sigma_i) &= \exp\Big( -\frac{\beta}{N}(N-1-2m) \Big)[1-(N-1-m)(\exp\Big(\frac{2\beta}{N}\Big)-1)]\\
&+  \exp\Big( \frac{\beta}{N}(N-1-2m) \Big)[1-m(\exp\Big(\frac{2\beta}{N}\Big)-1)] =f_{\mathrm{CW},\beta,N}(m).
\end{align*}

The right hand side being a convex function $m\in[0,N-1]$ and symmetric around $m=(N-1)/2$, it achieves its minimum on $\{0,\ldots,N-1\}$ at $m=\lfloor(N-1)/2\rfloor$.
\end{proof}

\subsubsection{Ising model}\label{sec: Ising}

Let $\Lambda\subseteq\Z^d$ be a connected subset, which we endow with the natural graph structure $\sim$ inherited from $\Z^d$. The state space is $\Omega=\{-1,1\}^{\Lambda}$ and  the Hamiltonian is

\bes
H:\{-1,1\}^{\Lambda} \longrightarrow\R,\quad H(\eta)=\frac12\sum_{x\sim y}\eta_x\eta_y.
\ees
where $x\sim y$ means that $x$ and $y$ are neighbors in $\Lambda$. The set of moves is $G=\{ \sigma_x\}_{x\in \Lambda}$, where $\sigma_x$ is the flip of the spin at site $x$, see \eqref{def: flip moves}. Therefore, for $\beta>0$ the transition rates \eqref{eq: spin jump rates} for the Glauber dynamics are

\bes
c(\eta,\sigma_x)= \exp\left(-\beta\eta_x \sum_{y\sim x} \eta_y\right)
\ees 

 \begin{theorem}\label{thm: Ising}
Assume that 

 \be\label{hyp: Ising}
 2d (1-\exp(-2\beta))\exp(4d\beta)\leq 1 
 \ee
 
 Then the conclusion of Theorem \ref{thm: spin systems} holds with 
 
 \be\label{eq: kappa Ising} \kappa= 2-2d (1-\exp(-2\beta))\exp(2\beta d)\ee
 and
\be\label{eq: barkappa Ising} \bar{\kappa}=\exp(-2\beta d)-2d(1-\exp(-2\beta))\exp(2\beta d). \ee
\end{theorem}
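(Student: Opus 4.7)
The plan is to verify the hypotheses of Theorem \ref{thm: spin systems} and to compute the quantities $\kappa$ and $\bar\kappa$ defined at \eqref{ass: kappa spin} for the Glauber dynamics of the Ising model. Assumption \ref{H3.4} is immediate: each single-site flip $\sigma_x$ satisfies $\sigma_x^2 = \mathrm{id}$, and flips at two different sites act on disjoint coordinates and therefore commute. To verify \ref{H4.4} and identify the two constants, I will express everything in terms of the disagreement count
\bes
m(\eta,x) = |\{y\sim x : \eta_y\eta_x = -1\}|, \qquad 0 \leq m(\eta,x) \leq \deg(x) \leq 2d,
\ees
starting from the identity $\eta_x\sum_{y\sim x}\eta_y = \deg(x)-2m(\eta,x)$, which gives $c(\sigma_x\eta,\sigma_x) = \exp(\beta(\deg(x)-2m(\eta,x)))$.

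Next I would compute the discrete gradients appearing in \eqref{kappaspin}. For any site $z\neq x$, a direct calculation shows $\nabla_{\sigma_x}c(\eta,\sigma_z)=0$ whenever $z\not\sim x$, while for $z\sim x$,
\bes
\nabla_{\sigma_x}c(\eta,\sigma_z) = c(\eta,\sigma_z)\bigl(\exp(2\beta\eta_z\eta_x)-1\bigr),
\ees
whose sign matches that of $\eta_z\eta_x$. Thus only the $m(\eta,x)$ disagreeing neighbors contribute to the sum in \eqref{kappaspin}, and using the crude uniform bound $c(\eta,\sigma_z)\leq \exp(2d\beta)$ yields
\bes
\kappa(\eta,\sigma_x) \geq \exp\bigl(\beta(\deg(x)-2m(\eta,x))\bigr) - m(\eta,x)\,(1-\exp(-2\beta))\exp(2d\beta).
\ees
Minimising the right-hand side over $m(\eta,x) \in \{0,\ldots,2d\}$ (worst case $m(\eta,x)=\deg(x)=2d$) gives
\bes
\bar\kappa \geq \exp(-2\beta d) - 2d(1-\exp(-2\beta))\exp(2d\beta),
\ees
which is \eqref{eq: barkappa Ising}. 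The assumption \eqref{hyp: Ising} is exactly the statement that this quantity is nonnegative, so \ref{H4.4} holds.

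Finally, flipping the spin at $x$ exchanges agreeing and disagreeing neighbors, so $m(\sigma_x\eta,x)=\deg(x)-m(\eta,x)$. Summing the two lower bounds and using the elementary inequality $e^{t}+e^{-t}\geq 2$,
\begin{align*}
\kappa(\eta,\sigma_x)+\kappa(\sigma_x\eta,\sigma_x) &\geq e^{\beta(\deg(x)-2m)} + e^{-\beta(\deg(x)-2m)} - \deg(x)(1-\exp(-2\beta))\exp(2d\beta) \\
&\geq 2 - 2d(1-\exp(-2\beta))\exp(2d\beta),
\end{align*}
which matches \eqref{eq: kappa Ising}; here I crucially use that the two sums of $c(\eta,\sigma_z)$-terms (with $m$ and $\deg(x)-m$ addends respectively) combine into a single sum of $\deg(x) \leq 2d$ terms, avoiding any factor of~$2$ loss. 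Theorem \ref{thm: spin systems} now delivers the three functional inequalities. There is no real obstacle in the argument beyond bookkeeping; the only delicate point is recognising that the $e^t + e^{-t}\geq 2$ trick must be applied \emph{after} combining the two bounds, since estimating $\kappa(\eta,\sigma_x)$ and $\kappa(\sigma_x\eta,\sigma_x)$ separately and summing would produce the weaker constant $2-4d(1-\exp(-2\beta))\exp(2d\beta)$.
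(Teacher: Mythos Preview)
Your proof is correct and follows essentially the same route as the paper's: verify \ref{H3.4} directly, compute $\nabla_{\sigma_x}c(\eta,\sigma_z)$ to see that only disagreeing neighbours contribute to the sum in \eqref{kappaspin}, bound each such term by $(1-\exp(-2\beta))\exp(2\beta d)$, and then add the bounds for $\kappa(\eta,\sigma_x)$ and $\kappa(\sigma_x\eta,\sigma_x)$ using $e^t+e^{-t}\geq 2$ together with the observation that the two negative sums have $m+(\deg(x)-m)=\deg(x)\leq 2d$ terms in total. The only cosmetic differences are that the paper counts agreeing rather than disagreeing neighbours and bounds $c(\sigma_x\eta,\sigma_y)\leq\exp(2\beta(d-1))$ instead of $c(\eta,\sigma_z)\leq\exp(2\beta d)$, which after multiplying by the respective factor gives the identical estimate; your version also keeps $\deg(x)$ general rather than fixing it to $2d$, which is harmless and indeed slightly cleaner for $\Lambda\subsetneq\Z^d$.
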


\begin{remark}
In the article \cite[Cor 4.4]{erbar2017ricci} the authors establish entropic Ricci curvature bounds, and in particular MLSI, under the condition $\varepsilon(\beta)\leq1$, where 

\bes
\varepsilon(\beta) =(2d-1) (1-\exp(-2\beta))\exp(4\beta d)
\ees
The condition \eqref{hyp: Ising} of Theorem \ref{thm: Ising} is therefore more demanding. However, when both results apply, the bound on the MLSI constant $\kappa+2\bar\kappa$ provided by Theorem \ref{thm: Ising} is better than the bound $4(1-\varepsilon(\beta))\exp(-2\beta d)$ found there, at least for $d\geq 2$. Indeed, after some calculations, one can find that the difference between the two bounds is

\bes 2(1-\exp(-2\beta d)) + (4-3\frac{2d}{2d-1})\varepsilon(\beta)\exp(-2\beta d).\ees
As for the Curie Weiss, the findings of Theorem \ref{thm: Ising} concerning Beckner inequalities and general convex Sobolev inequalities seem to be new and estimates on the best constant for the non modified logarithmic Sobolev inequality have been obtained in \cite{marton2015logarithmic} under a condition that appears to be weaker than \eqref{hyp: Ising}.
\end{remark}

\begin{proof}
The proof is done verifying that the hypothesis of Theorem \ref{thm: spin systems} hold with $\kappa,\bar\kappa$ as in \ref{eq: kappa Ising} and \eqref{eq: barkappa Ising}. We first observe that for all $x\neq y$ we have
\bes
\frac{c(\sigma_x\eta,\sigma_y)}{c(\eta,\sigma_y)} = \begin{cases}
\exp(2\beta\eta_x\eta_y) & \quad \mbox{if $x\sim y$}\\
 1, & \quad \mbox{otherwise} 
\end{cases},
\ees
 and that if $|\{y\sim x :\eta_x\eta_y=1 \}|=m\in\{0,\ldots,2d\}$ then we have
 
 \bes
c(\eta,\sigma_x) = \exp(2\beta(d-m)), \quad  c(\sigma_x\eta,\sigma_x) = \exp(2\beta(m-d)).
\ees

Therefore, Recalling the deinition \eqref{kappaspin} of $\kappa(\eta,\sigma_x)$  we have 
\begin{align*}
\kappa(\eta,\sigma_x)&=c(\sigma_x\eta,\sigma_x)+\sum_{\substack{y\sim x \\ \eta_x\eta_y=-1 }} c(\sigma_x\eta,\sigma_y)-c(\eta,\sigma_y) \\
&=c(\sigma_x\eta,\sigma_x)+(1-\exp(2\beta))\sum_{\substack{y\sim x \\ \eta_x\eta_y=-1 }} c(\sigma_x\eta,\sigma_y)
\end{align*}
If $y\sim x$ with $\eta_x\eta_y=-1$ then
\begin{align*}
c(\sigma_x\eta,\sigma_y) &= \exp(-\beta \eta_y \sum_{z\sim y}(\sigma_x\eta)_z) \\
&= \exp\left(-\beta \eta_y \sum_{\substack{z\sim y \\ z\neq x }}\eta_z +\beta\eta_y\eta_x\right)\leq \exp(2\beta(d-1)).
\end{align*}

and therefore 

\be\label{eq: estimate on kappa ising}
\kappa(\eta,\sigma_x)\geq \exp(2\beta(m-d))-(1-\exp(-2\beta))(2d-m)\exp(2\beta d).
\ee

In particular, if \eqref{hyp: Ising} holds, then \ref{H4.4} is satisfied and we can apply  Theorem \ref{thm: spin systems}. It remains to compute $\kappa$ and $\bar{\kappa}$. From \eqref{eq: estimate on kappa ising} we immediately get that $\bar\kappa$ can be taken as in \eqref{eq: barkappa Ising}. Using the same argument that led to \eqref{eq: estimate on kappa ising} one gets

\bes
\kappa(\sigma_x\eta,\sigma_x)\geq \exp(2\beta(d-m))-(1-\exp(-2\beta))m\exp(2\beta d)
\ees

If $\kappa$ is given by \eqref{eq: kappa Ising}, observing that $a+1/a\geq 2$ we get that $\kappa(\eta,\sigma_x)+\kappa(\sigma_x\eta,\sigma_x) \geq \kappa$ holds uniformly in $\eta\in\Omega,x\in\Lambda$. The conclusion follows by Theorem \ref{thm: spin systems}.
\end{proof}

\section{More examples}\label{sec: review}

\subsection{Bernoulli Laplace}
The Bernoulli Laplace model is the simple exclusion process on the complete graph. Given $L>N\in\N$, where $L$ represents the number of sites and $N$ the number of particles we consider the state space
\bes
\Omega= \Big\{ \eta:\{1,\ldots,L \} \rightarrow \{0,1\}: \sum_{i=1}^L\eta_i=N \Big\},
\ees 
where $\eta_i=1$ means that a particle is present at site $i$. For any $1\leq i \leq L$ we define $\delta_i\in\Omega$ by 
\bes 
(\delta_i)_k = \begin{cases}
1, & \quad \mbox{if $i=k$,}\\
0, & \quad \mbox{otherwise.}
\end{cases}
\ees
The set of moves is $G=\{\sigma_{ij}, i,j\in\{1,\ldots,L\} \}$, where 
\bes
\sigma_{ij}(\eta)=\begin{cases} \eta - \delta_i +\delta_j, & \quad \mbox{if $\eta_i(1-\eta_j)=0$,}\\
\eta, & \quad \mbox{otherwise}
\end{cases}
\ees
The map $\sigma_{ij}$ moves a particle from site $i$ to site $j$, when this is possible. The jump rates for the Bernoulli-Laplace model are defined by
\bes
\forall i,j\in\{1,\ldots,L\}, \quad c(\eta,\sigma_{ij}) =\eta_i(1-\eta_j),
\ees
and therefore the reversible measure $\inv$ is the uniform measure on $\Omega$. Assumptions \ref{H0} and \ref{H1} are clearly satisfied. In particular, $(\sigma_{ij})^{-1}=\sigma_{ji}$.

\begin{theorem}\label{thm: BL}
For the Bernoulli Laplace model the the following hold
\bei
\item[(i)] If $\phi$ satisfies \ref{H2}, then the convex Sobolev inequality \eqref{eq: convex sob rev} holds with $\kappa_{\phi}=L$. 
  \item[(ii)] The modified log Sobolev inequality \eqref{log sob} holds with $\kappa_1=L+2$.
 \item[(iii)] For $\alpha \in (1,2]$, the Beckner inequality \eqref{beckner ineq} holds with $\kappa_{\alpha}=\alpha L$.
 \eei
 \end{theorem}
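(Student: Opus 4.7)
The plan is to apply Proposition \ref{cor: sufficient cond} by constructing explicit coupling rates tailored to the exclusion structure. For a pair $(\eta, \sigma_{ij}\eta)$ with $\eta_i = 1,\ \eta_j = 0$, set the nonzero coupling rates to be:
\begin{itemize}
    \item[(a)] $\couprate(\eta, \sigma_{ij}\eta, \sigma_{kl}, \sigma_{kl}) = 1$ for all $k, l \notin \{i,j\}$ with $\eta_k = 1, \eta_l = 0$;
    \item[(b)] $\couprate(\eta, \sigma_{ij}\eta, \sigma_{il}, \sigma_{jl}) = 1$ for all $l \notin \{i,j\}$ with $\eta_l = 0$;
    \item[(c)] $\couprate(\eta, \sigma_{ij}\eta, \sigma_{kj}, \sigma_{ki}) = 1$ for all $k \notin \{i,j\}$ with $\eta_k = 1$;
    \item[(d)] $\couprate(\eta, \sigma_{ij}\eta, \sigma_{ij}, \zero) = \couprate(\eta, \sigma_{ij}\eta, \zero, \sigma_{ji}) = 1$.
\end{itemize}
Each active move at $\eta$ or at $\sigma_{ij}\eta$ is accounted for by exactly one of these families, so the marginal conditions \eqref{eq:couprates} hold. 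A direct inspection shows that the moves in families (b), (c), (d) are all \emph{coalescing}, meaning $\gamma\eta = \bgamma\sigma_{ij}\eta$; their total rate is $(L-N-1) + (N-1) + 1 + 1 = L$, giving $\kappa''' = L$ in \eqref{def: kappa'''}, while (d) yields $\kappa'' = 1$ in \eqref{def: kappa''}.

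The main step will be to verify \eqref{eq: discrete gradient estimate} with $\kappa' = L$. Since $\Phi$ vanishes on the diagonal, the coalescing families (b)--(d) contribute exactly $-L\,\cE(\phi'(f),f)$ to the left-hand side, via the identity $2\cE(\phi'(f), f) = \sum_{(\eta, \sigma_{ij}) \in \supp} \Phi(f(\eta), f(\sigma_{ij}\eta)) \inv(\eta)$. The remaining contribution, coming from the parallel-move family (a), will be shown to vanish identically. The hard part is this cancellation, which will be handled via the substitution $\eta' = \sigma_{kl}\eta$: the moves $\sigma_{ij}$ and $\sigma_{kl}$ commute when $\{i,j\} \cap \{k,l\} = \emptyset$, so $\Phi(f(\sigma_{kl}\eta), f(\sigma_{kl}\sigma_{ij}\eta)) = \Phi(f(\eta'), f(\sigma_{ij}\eta'))$; the measure $\inv$ is uniform, hence invariant under $\sigma_{kl}$; and the active-move conditions $\eta_i = \eta_k = 1,\ \eta_j = \eta_l = 0$ on the quadruple $(i,j,k,l)$ translate to $\eta'_i = \eta'_l = 1,\ \eta'_j = \eta'_k = 0$, which after relabeling $k \leftrightarrow l$ match exactly the conditions entering the subtracted term $\Phi(f(\eta), f(\sigma_{ij}\eta))$, yielding an index-by-index cancellation.

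Collecting the above gives $\kappa' = L$, $\kappa'' = 1$, $\kappa''' = L$, and Proposition \ref{cor: sufficient cond} then yields $\kappa_\phi = L$ for every $\phi$ satisfying \ref{H2} (proving (i)), $\kappa_1 = \kappa' + 2\kappa'' = L + 2$ (proving (ii)), and $\kappa_\alpha = \kappa' + (\alpha-1)\kappa''' = \alpha L$ (proving (iii)). No perturbative input is needed; the only nontrivial algebra is the reindexing cancellation for family (a), which is itself a short computation once the change of variables is set up.
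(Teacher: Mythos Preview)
Your proof is correct and follows essentially the same approach as the paper: the coupling rates you write down coincide with the paper's construction (the paper's $\min\{c(\eta,\gamma),c(\sigma_{ij}\eta,\gamma)\}$ for $\gamma=\bgamma$ is nonzero precisely for your family (a)), and your change-of-variables cancellation for family (a) is the concrete form of the reversibility argument the paper invokes via \eqref{eq rev}. The only cosmetic difference is that the paper organizes the remaining contribution as $\tfrac{1}{2}(B+C+D)$ and evaluates each piece to $-4\cE$, $-2(L-N-1)\cE$, $-2(N-1)\cE$ respectively, whereas you bundle the coalescing families and count the total rate $L$ directly.
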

\begin{remark}
Poincaré inequalities and MLSI for the Bernoulli Laplace model have been extensively studied, see \cite{gao2003exponential,goel2004modified,bobkov2006modified}. The estimate ons $\kappa_1$ and $\kappa_2$ given by Theorem \ref{thm: BL} match the best known results \cite{caputo2009convex,erbar2015discrete,diaconis1987time}. Beckner inequalites have been studied in \cite{bobkov2006modified} and \cite{jungel2017discrete}. Our constant agrees with the one found in \cite{jungel2017discrete}. In there, the more general case of non-homogeneous rates is treated as well. Arguably, our method also works in this case but we leave it to future work to verify this. For general functions $\phi$ satisfying \ref{H2} the convex Sobolev inequality obtained at Theorem \ref{thm: BL} seems to be new.
\end{remark}

\begin{proof}
Let $(\eta,\sigma_{ij})\in\cS$, i.e. $\eta_i=1,\eta_j=0.$ We define

\be\label{eq: coup rates BL}
\couprate(\eta,\sigma_{ij}\eta,\gamma,\bgamma)=\begin{cases}  \min\{c(\eta,\gamma),c(\sigma_{ij}\eta,\gamma) \}, & \quad \mbox{if $\gamma=\bgamma \in G$}, \\
1, & \quad \mbox{if $\gamma=\sigma_{ij},\bgamma=\zero$ or $\gamma=\zero,\bgamma=\sigma_{ji}$},\\
(1-\eta_l), & \quad \mbox{if $\gamma=\sigma_{il},\bgamma=\sigma_{jl}$, $l\notin\{i,j\}$},\\
\eta_k,  & \quad \mbox{if $\gamma=\sigma_{kj},\bgamma=\sigma_{ki}$, $k\notin\{i,j\}$}.\\
0,& \quad \mbox{otherwise}\\
\end{cases}
\ee
It can be verified with a direct calculation that \eqref{eq: coup rates BL} defines admissible coupling rates. Let  $\phi$ satisfy \ref{H2} and $f>0$. In view of \eqref{eq: coup rates BL}, the left hand side of \eqref{eq: discrete gradient estimate} can be written as $\frac{1}{2}(A+B+C+D)$ with
\begin{align*}
A&=\sum_{\substack{\eta,(i,j),(k,l)}}c(\eta,\sigma_{ij})\min\{c(\eta,\sigma_{kl}),c(\sigma_{ij}\eta,\sigma_{kl})\}\coupgrad_{\sigma_{kl},\sigma_{kl}} \fphi(\eta,\sigma_{ij}\eta)\inv(\eta),\\
B&=\sum_{\eta,(i,j)} c(\eta,\sigma_{ij})[\coupgrad_{\sigma_{ij},\zero} \fphi(\eta,\sigma_{ij}\eta) + \coupgrad_{\zero,\sigma_{ji}} \fphi(\eta,\sigma_{ij}\eta) ]\inv(\eta),\\
C&=\sum_{\eta, (i,j)} c(\eta,\sigma_{ij})[\sum_{l\neq i,j}(1-\eta_l) \coupgrad_{\sigma_{il},\sigma_{jl}} \fphi(\eta,\sigma_{ij}\eta)]\inv(\eta),\\
D&=\sum_{\eta, (i,j)} c(\eta,\sigma_{ij})[\sum_{k\neq i,j}\eta_k \coupgrad_{\sigma_{kj},\sigma_{ki}} \fphi(\eta,\sigma_{ij}\eta)]\inv(\eta).\\
\end{align*}
We show that $A=0$. To do this, we first observe that 

\bes
c(\eta,\sigma_{ij})\min\{c(\eta,\sigma_{kl}),c(\sigma_{ij}\eta,\sigma_{kl})\} = 
\begin{cases} 
1, & \quad \mbox{if $i\neq k$,$\eta_i=\eta_k=1$,$\eta_j=\eta_l=0,j\neq l$,}\\
0, & \quad \mbox{otherwise.}
\end{cases}
\ees
Therefore, $c(\eta,\sigma_{ij})\min\{c(\eta,\sigma_{kl}),c(\sigma_{ij}\eta,\sigma_{kl})\}=c(\eta,\sigma_{kl})\min\{c(\eta,\sigma_{ij}),c(\sigma_{kl}\eta,\sigma_{ij})\}$ and we can rewrite $A$ as 
\bes
\sum_{\substack{\eta,(k,l)}}c(\eta,\sigma_{kl})F(\eta,\sigma_{kl})\inv(\eta)
\ees
with
\bes
F(\eta,\sigma_{kl})=\sum_{(i,j)}\min\{c(\eta,\sigma_{ij}),c(\sigma_{kl}\eta,\sigma_{ij})\}\coupgrad_{\sigma_{kl},\sigma_{kl}} \fphi(\eta,\gamma_{ij}\eta).
\ees
Using \eqref{eq rev} on this last expression we then obtain $A=-A$, whence $A=0$. Using that $\fphi$ vanishes on the diagonal and the fact that any $\eta\in\Omega$ has $N$ particles occupying $L$ sites and that $(\eta,\sigma_{ij})\in \cS$ implies $\eta_i=1,\eta_j=0$:

\begin{align*}
    B &=-4 \cE(\phi'(f),f),\\
    C &= -2(L-N-1)\cE(\phi'(f),f),\\
    D&=-2(N-1)\cE(\phi'(f),f).
\end{align*}
Therefore \eqref{eq: discrete gradient estimate} holds with $\kappa'=L$. Moreover, from the construction of coupling rates we get that \eqref{def: kappa''} holds with $\kappa''=1$ and \eqref{def: kappa'''} holds with $\kappa'''=L$. The conclusion follows from Proposition \ref{cor: sufficient cond}.
\end{proof}

\subsection{The hardcore model}\label{sec: hardcore}

Consider a finite undirected graph $(V,E)$ that is also simple ($(x,x)\notin E$) and connected. As usual, if $(x,y)\in E$ we write $x\sim y$ and say that $x,y$ are neighbors. The state space of the classical hardcore model is 

\bes \Omega=\{\eta: V\rightarrow\{0,1\} \, \text{s.t.} \, \eta_x\eta_y=0, \, \forall  x\sim y \}. \ees
For $x\in V$ we define its neighborhood as $N_x=\{y\neq x: y\sim x \}$ and we set $\bar{N}_x=N_x\cup\{x\}$. A configuration $\eta\in\Omega$ is such that if a site $x$ is occupied, then all sites in its neighborhood are empty. For any $x\in V$ we define $\delta_x\in\Omega$ as 
\bes
(\delta_x)_y= \begin{cases} 1, & \quad \mbox{if $y=x$ } \\
0, & \quad \mbox{otherwise.}
\end{cases}
\ees
The set of moves is $G=\{ \gamma_x^+,\gamma^-_x \, : \, x\in V\}$, where 

\bes
\gamma_x^+(\eta)=\begin{cases}\eta+\delta_x ,\quad \mbox{if $\eta+\delta_x\in\Omega$} \\ \eta \quad \mbox{otherwise} \end{cases}, \gamma_x^-(\eta)=\begin{cases}\eta-\delta_x ,\quad \mbox{if $\eta+\delta_x\in\Omega$} \\ \eta \quad \mbox{otherwise} \end{cases}.
\ees
The generator is given by

\be\label{eq: hardcore gen}
\cL f (\eta) = \sum_{x\in V}c(\eta,\gamma_x^-)\nabla_{x}^-f(\eta) + c(\eta,\gamma^+_x)\nabla^{+}_{x}f(\eta)
\ee
where 
\bes
c(\eta,\gamma_x^-)=\eta_x,\quad c(\eta,\gamma_x^+) = \rho\prod_{y\in \bar{N}_x}(1-\eta_y).
\ees
for some constant $\rho>0$. The above means that a new particle arrives at rate $\rho$ on an empty site $x\in V$ the neighborhood of $x$ is empty. Each occupied site $x\in V$ is emptied at rate $1$. It is clear that \ref{H0} and \ref{H1} hold. In particular, for all $x\in V$ we have $(\gamma^+_x)^{-1}=\gamma_x^-$ and the reversible measure for the hardcore model is known to be (see \cite{dai2013entropy} for example)

\bes
\pi(\eta)= \frac{1}{Z}\mathbf{1}_{\eta\in\Omega} \prod_{x\in V}\rho^{\eta_x},
\ees
where $Z$ is the normalization.
 
\begin{theorem}\label{thm: hardcore}
Let $\Delta$ be the maximum degree of $(V,E)$,  $\Delta=\sup_{x\in V} |N_x|$. Assume that

\be\label{hardcore ass}
\rho\Delta \leq 1
\ee

and set

\be\label{eq:kappa hard} \kappa=1-\rho(\Delta-1), \quad \bar{\kappa}= \min\{\rho,(1-\rho\Delta)\} \ee
Then the following hold
\bei
\item[(i)] If $\phi$ satisfies \ref{H2}, then the convex Sobolev inequality \eqref{eq: convex sob rev} holds with $\kappa_{\phi}=\kappa$. 
 \item[(ii)] The modified log Sobolev inequality \eqref{log sob} holds with $\kappa_1=\kappa+2\bar{\kappa}$.
 
\item[(iii)] For $\alpha \in (1,2]$, the Beckner inequality \eqref{beckner ineq} holds with $\kappa_{\alpha}=\alpha\kappa$
 
 \eei

\end{theorem}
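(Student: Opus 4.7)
The plan is to apply Proposition \ref{cor: sufficient cond} after constructing coupling rates guided by the principle that from $(\eta,\gamma_x^+\eta)\in\cS$ the two coupled chains should meet as fast as possible, and whenever the chain at $\eta$ makes a birth move at a neighbor $y\sim x$ that is forbidden for the chain at $\gamma_x^+\eta$ (because site $x$ is now occupied), the latter should simultaneously remove the particle at $x$ so the pair remains at graph distance one. Concretely, writing $\cS_+ = \{(\eta,x):\eta+\delta_x\in\Omega\}$, for $(\eta,x)\in\cS_+$ I will set $\couprate(\eta,\gamma_x^+\eta,\gamma,\gamma) = \min\{c(\eta,\gamma),c(\gamma_x^+\eta,\gamma)\}$ on the diagonal $\gamma=\bgamma\in G$, $\couprate(\eta,\gamma_x^+\eta,\gamma_x^+,\zero)=\rho$, $\couprate(\eta,\gamma_x^+\eta,\gamma_y^+,\gamma_x^-)=c(\eta,\gamma_y^+)$ for each $y\sim x$, and $\couprate(\eta,\gamma_x^+\eta,\zero,\gamma_x^-)=1-\sum_{y\sim x}c(\eta,\gamma_y^+)$; the rates on the pairs of $\cS$ with $\sigma = \gamma_x^-$ are defined by the swap symmetry used in the proof of Theorem \ref{thm: LSI int random walks}. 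The marginal identities in Definition \ref{def couprates} will be verified by inspection; non-negativity of the $(\zero,\gamma_x^-)$-rate is where hypothesis \eqref{hardcore ass} enters, since $\sum_{y\sim x}c(\eta,\gamma_y^+)\leq |N_x|\rho\leq \Delta\rho\leq 1$.

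The constants $\kappa'',\kappa'''$ of Proposition \ref{cor: sufficient cond} will then follow immediately from the construction: the meeting rates force $\kappa''\geq \min\{\rho,1-\rho\Delta\}=\bar\kappa$; and since the running-after pair $(\gamma_y^+,\gamma_x^-)$ sends $(\eta,\gamma_x^+\eta)$ to $(\eta+\delta_y,\eta)$, hence off the diagonal, only the two meeting pairs contribute to \eqref{def: kappa'''}, giving $\rho+(1-\sum_{y\sim x}c(\eta,\gamma_y^+))\geq 1+\rho-\rho\Delta = \kappa$, so $\kappa'''\geq\kappa$.

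The technical heart will be the verification of \eqref{eq: discrete gradient estimate} with $\kappa'=\kappa$. I will first use reversibility \eqref{eq rev} combined with the symmetry of $\fphi$ to collapse the factor $\tfrac12$ and rewrite the LHS as a single sum over $\cS_+$, exactly as in the reduction at the start of the proof of Theorem \ref{thm: LSI int random walks}. The resulting expression splits as $I_1+I_2+I_3+I_4$, where $I_1$ gathers the diagonal contributions with $\gamma=\bgamma=\gamma_y^{\pm}$ and $y\notin\{x\}\cup N_x$ (for $y\in\{x\}\cup N_x$ one of $c(\eta,\cdot),c(\gamma_x^+\eta,\cdot)$ vanishes and the diagonal contribution is automatically zero), and $I_2,I_3,I_4$ collect the two meeting contributions and the running-after contribution respectively.

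The main obstacle, in analogy with the cancellations $A.3+C.1=0$ and $A.1=0$ in the proof of Theorem \ref{thm: LSI int random walks}, will be to show $I_1=0$. For $y\notin\{x\}\cup N_x$ one has $c(\eta,\gamma_y^{\pm})=c(\gamma_x^+\eta,\gamma_y^{\pm})$, so the $\gamma_y^+$-contribution simplifies to $\rho^2\sum[\fphi(\eta+\delta_y,\eta+\delta_x+\delta_y)-\fphi(\eta,\eta+\delta_x)]\inv(\eta)$ summed over $(\eta,x),(\eta,y)\in\cS_+$ with $y\notin\{x\}\cup N_x$, while the $\gamma_y^-$-contribution, via the change of variable $\eta\mapsto \eta-\delta_y$ together with $\inv(\eta)=\rho\inv(\eta-\delta_y)$, rewrites as the exact negative of the $\gamma_y^+$-contribution over the same summation range. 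For the remaining terms, $I_2=-\rho\cE(\phi'(f),f)$ will be immediate from $\Phi(a,a)=0$; the subtractive piece of $I_4$ will cancel the $\sum_{y\sim x}c(\eta,\gamma_y^+)$-term in $I_3$, yielding a net $-\cE(\phi'(f),f)$; and the positive remainder $\rho\sum_{(\eta,x)\in\cS_+,\,y\sim x}c(\eta,\gamma_y^+)\fphi(\eta+\delta_y,\eta)\inv(\eta)$ will be reindexed by summing over $(\eta,y)\in\cS_+$ first and bounded using $|\{x\sim y:(\eta,x)\in\cS_+\}|\leq \Delta$ together with the identity $\rho\sum_{(\eta,y)\in\cS_+}\fphi(\eta,\gamma_y^+\eta)\inv(\eta)=\cE(\phi'(f),f)$, giving at most $\rho\Delta\,\cE(\phi'(f),f)$. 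Summing up yields $\mathrm{LHS}\leq -(1+\rho-\rho\Delta)\cE(\phi'(f),f)=-\kappa\,\cE(\phi'(f),f)$, and the three claims (i), (ii), (iii) will then follow from Proposition \ref{cor: sufficient cond} using $\kappa_\phi=\kappa'=\kappa$, $\kappa_1=\kappa'+2\kappa''=\kappa+2\bar\kappa$, and $\kappa_\alpha=\kappa'+(\alpha-1)\kappa'''=\alpha\kappa$.
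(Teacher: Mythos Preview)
Your argument is correct and follows essentially the same route as the paper. The coupling rates you write down coincide with the paper's (your $c(\eta,\gamma_y^+)$ for $y\sim x$ is exactly the paper's $\rho\,\mathbf 1_{\{\eta|_{\bar N_y}\equiv 0\}}$), and you apply Proposition~\ref{cor: sufficient cond} with the same constants $\kappa'=\kappa'''=\kappa$ and $\kappa''=\bar\kappa$.

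The only organizational difference is in handling the running-after piece: the paper groups the two meeting moves into a single term $C$ and proves the running-after term $B=I_4$ vanishes by the $x\leftrightarrow y$ symmetry (since the summation constraints $(\eta,x),(\eta,y)\in\cS_+$, $x\sim y$ are symmetric and $\fphi$ is symmetric, one sees $B=-B$). You instead let the $-\fphi(\eta,\gamma_x^+\eta)$ part of $I_4$ cancel the corresponding piece of $I_3$ and then bound the remaining positive term $P_2=\rho\sum c(\eta,\gamma_y^+)\fphi(\eta,\gamma_y^+\eta)\inv(\eta)$ by $\rho\Delta\,\cE(\phi'(f),f)$ after reindexing. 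Since that same symmetry gives $P_1=P_2$, the two computations are literally the same inequality reorganized; your version has the slight advantage of not needing to isolate the symmetry explicitly.
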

The hardcore model and its generalizations have been intensively studied, see the discussion in \cite[Sec. 22.4]{levin2017markov} of the book by Levin, Peres and Wilmer. Mixing times have been studied in \cite{luby1999fast},\cite{dyer2000markov},\cite{vigoda2001note} among others.

\begin{remark}
The best estimates for the MLSI of the hardcore model have been obtained in \cite{dai2013entropy,erbar2017ricci}. For instance, in \cite[Cor 4.8]{erbar2017ricci} MLSI is shown to old with constant $1-\rho(\Delta-1)$ under assumption \eqref{hardcore ass}. Therefore \eqref{thm: hardcore} improves on this result. We are not aware of previously known results about Beckner inequalities or general convex Sobolev inequalities for the hardcore model. Note that in the two above mentioned references a more general version of the hardcore model is considered. We leave it to future work to see whether the methods of this paper yield interesting results for the general model.
\end{remark}

\begin{proof}
The proof is done constructing coupling rates such that the assumptions of Proposition \ref{cor: sufficient cond} hold with $\kappa'=\kappa'''=\kappa$, $\kappa''=\bar\kappa$. Consider a pair $(\eta,\gamma_x^+)\in \cS$. This means that $x$ and all sites in the neighborhood of $x$ are empty, i.e. $\eta|_{\bar{N}_x} \equiv 0$. We then define
\be\label{eq: hard coup rates 1}
    \couprate(\eta,\gamma_x^+\eta,\gamma,\bgamma) = \begin{cases}  \min\{c(\eta,\gamma),c(\gamma_x^+\eta,\gamma) \} , &\quad \mbox{if $\gamma=\bgamma\in G$,}\\
    \rho, & \quad \mbox{if $\gamma=\gamma_y^+,\bgamma=\gamma_x^-$ with  $y\sim x,\eta|_{\bar{N}_y} \equiv 0$,}\\
    \rho,  & \quad \mbox{if $\gamma=\gamma_x^+$,$\bgamma=\zero$,}\\
    1-\rho \big|\{y: y\sim x, \eta|_{\bar{N}_y}\equiv 0 \}\big|,  & \quad \mbox{if $\gamma=\zero$,$\bgamma=\gamma^-_x$,}\\
    0, & \quad \mbox{otherwise.}
    \end{cases}
\ee
If $(\eta,\gamma^{-}_x)\in \cS$, then $(\gamma^-_x\eta,\gamma^{+}_x)\in \cS$ as well and using the former definition we set
\be\label{eq: hard coup rates 2} 
\forall\gamma,\bgamma\in G^*, \quad \couprate(\eta,\gamma^{-}_x\eta,\gamma,\bgamma)=\couprate(\gamma_x^-\eta,\gamma_x^+(\gamma^-_x\eta),\bgamma,\gamma)=\couprate(\gamma_x^-\eta,\eta,\bgamma,\gamma) . \ee
Thanks to \eqref{hardcore ass} we have that $\couprate(\eta,\gamma_x^+\eta,\gamma,\bgamma)$ is always non negative. It can verified with a direct calculation that \eqref{eq: hard coup rates 1},\eqref{eq: hard coup rates 2} define admissible coupling rates. In order to do so, observe that $c(\eta,\gamma^+_{y})=c(\gamma^+_{x}\eta,\gamma^+_{y})$ as soon as $y\notin\bar{N}_x$ and that $c(\eta,\gamma^-_{y})=c(\gamma_x^+\eta,\gamma^-_{y})$ for all $y\in V$ except for $y=x$ where $c(\eta,\gamma^-_{x})=0$ and $c(\gamma_x^+\eta,\gamma^-_{x})=1$. The next step is to prove that \eqref{eq: discrete gradient estimate} holds with $\kappa'=\kappa$. We have to show that for all $f>0$ and $\phi$ satisfying \ref{H2}

\begin{align}\label{eq: hardcore G+G-}
\nonumber \frac12\sum_{(\eta,\gamma_x^+)\in\supp}\sum_{(\gamma,\bgamma)\in G^*}c(\eta,\gamma_x^+)\couprate(\eta,\gamma_x^+\eta,\gamma,\bgamma) \coupgrad_{\gamma,\bgamma} \fphi (\eta,\gamma_x^+\eta)\, \inv(\eta) \\
+\frac12\sum_{(\eta,\gamma_x^-)\in\supp}\sum_{(\gamma,\bgamma)\in G^*}c(\eta,\gamma_x^-)\couprate(\gamma_x^-\eta,\eta,\bgamma,\gamma) \coupgrad_{\gamma,\bgamma} \fphi (\eta,\gamma_x^-\eta)\, \inv(\eta)
 \end{align}
 is bounded above by $-\kappa \cE(\phi'(f),f)$. Using reversibility \eqref{eq rev} and the fact that $\fphi$ is symmetric in its arguments, we get that the second summand in \eqref{eq: hardcore G+G-} equals the first one. Combining this with
 \bes
\forall \, \eta,x,y, \quad c(\eta,\gamma_y^-)\leq c(\gamma_x^+\eta,\gamma_y^-),\quad c(\eta,\gamma_y^+)\geq c(\gamma_x^+\eta,\gamma_y^+), \ees we can rewrite  \eqref{eq: hardcore G+G-} as $A+B+C$, where

\begin{align*}
 A & = \sum_{\substack{\eta\in\Omega \\ x,y\in V}} c(\eta,\gamma_x^+)c(\eta,\gamma_y^-) \coupgrad_{\gamma^-_y,\gamma^-_y} \fphi(\eta,\gamma^+_x\eta)\inv(\eta)\\
 & +   \sum_{\substack{\eta\in\Omega \\ x,y\in V}} c(\eta,\gamma_x^+)c(\gamma^+_x\eta,\gamma_y^+) \coupgrad_{\gamma^+_y,\gamma^+_y} \fphi(\eta,\gamma^+_x\eta) \inv(\eta) \\
 B & = \sum_{\substack{\eta,x\sim y \\\eta|_{\bar{N}_y}\equiv 0}}c(\eta,\gamma^+_x)\rho \coupgrad_{\gamma^+_y,\gamma^-_x}\fphi(\eta,\gamma^+_x\eta)\inv(\eta)\\
 C &=  \sum_{\substack{\eta \in \Omega\\x\in V}} c(\eta,\gamma_x^+)\Big[(1-\rho \big|\{y: y\sim x, \eta|_{\bar{N}_y}\equiv 0 \}\big|)\coupgrad_{\zero,\gamma_x^-}\fphi(\eta,\gamma_x^+\eta) +\rho \coupgrad_{\gamma_x^+,\zero}\fphi(\eta,\gamma_x^+\eta)\Big]\inv(\eta).
\end{align*}
Define the set $G^-=\{\gamma_y^-:y\in V \}$ and observe that the first term in $A$ rewrites as 
\bes
\sum_{\substack{\eta \in \Omega\\y\in V}}c(\eta,\gamma)F(\eta,\gamma)\inv(\eta), \quad F(\eta,\gamma)=\mathbf{1}_{\{\gamma\in G^-\}}\sum_{x\in V}c(\eta,\gamma_x^+) \coupgrad_{\gamma,\gamma} \fphi(\eta,\gamma^+_x\eta).
\ees
Therefore, reversibility \eqref{eq rev} yields the equivalent from

\be\label{eq:hard rev}
\sum_{\substack{\eta\in\Omega\\x,y\in V}} c(\eta,\gamma_y^+)c(\gamma_y^+\eta,\gamma_x^+)\coupgrad_{\gamma_y^-,\gamma_y^-}\fphi(\gamma_y^+\eta,\gamma_x^+\gamma_y^+\eta)\inv(\eta).
\ee
Observing that 
\bes c(\eta,\gamma_y^+)c(\gamma_y^+\eta,\gamma_x^+) = c(\eta,\gamma_x^+)c(\gamma_x^+\eta,\gamma_y^+)= \begin{cases} \rho^2, & \quad \mbox{if $x\nsim y$, $\eta\big|_{\bar{N}_x\cup \bar{N}_y}\equiv 0$}\\
0, & \quad\mbox{otherwise,} \end{cases}\ees
and that 
\bes
x\nsim y, \,\eta\big|_{\bar{N}_x\cup \bar{N}_y}\equiv 0 \Rightarrow \gamma_y^+\gamma_x^+\eta=\gamma_x^+\gamma_y^+\eta=\eta+\delta_x+\delta_y,
\ees
we obtain that \eqref{eq:hard rev} equals
\bes
-\sum_{\substack{\eta\in\Omega\\x,y\in V}} c(\eta,\gamma_x^+)c(\gamma_x^+\eta,\gamma_y^+)\coupgrad_{\gamma_y^+,\gamma_y^+}\fphi(\eta,\gamma_x^+\eta)\inv(\eta),
\ees
from which $A=0$ follows. Next, we consider $B$. Recalling that $(\eta,\gamma_x^+)\in \cS$ if and only if $\eta\big|_{\bar{N}_x}\equiv 0$
we can rewrite $B$ in a symmetric way exchanging the labels $x$ and $y$ 
\begin{align*}
2B =  \rho\sum_{\substack{\eta,x\sim y \\\eta|_{\bar{N}_x\cup \bar{N}_y}\equiv0}}[c(\eta,\gamma^+_x)-c(\eta,\gamma^+_{y})] [\fphi(\eta,\gamma_y^+\eta)-\fphi(\eta,\gamma^+_x\eta)]
\end{align*}
This last expression is worth $0$ since $\eta|_{\bar{N}_x\cup \bar{N}_y}\equiv0$ implies $c(\eta,\gamma^+_x)=c(\eta,\gamma^+_{y})=\rho$. Using the fact that $\fphi$ vanishes on the diagonal and the definition of $\Delta$ we get that 
\begin{align*}
C &\leq - (1-\rho(\Delta-1)) \sum_{\substack{\eta \in \Omega\\x\in V}} c(\eta,\gamma_x^+) \Phi(\eta,\gamma_x^+\eta)\inv(\eta)\\
&= -(1-\rho(\Delta-1))\cE(\phi'(f),f),\end{align*}
where to get the last equality we used the reversibility property \eqref{eq rev}. Thus, we have proven that \eqref{eq: discrete gradient estimate} holds with $\kappa'=\kappa$. From the choice of coupling rates we made, it is easily seen that \eqref{def: kappa''} holds with  $\kappa''=\bar\kappa$ and \eqref{def: kappa'''} holds with $\kappa'''=\kappa$. Proposition \ref{cor: sufficient cond} gives the desired conclusion.
\end{proof}

\subsection{Zero-range dynamics}

For zero range dynamics on the complete graph with $L$ vertices, the state space is

\bes \Omega=\left\{\eta:\{1,\ldots,L\}\rightarrow \N \, \text{s.t.}\, \sum_{x=1}^L\eta_x =N \right\},\ees

where $N$ is the number of particles $\eta_x=k$ means that there are $k$ particles at site $x$. As before, for $x\in\{1,\ldots,L\}$ we define $\delta_x\in\Omega$ as 

\bes
(\delta_x)_y = \begin{cases}1, & \quad \mbox{if $y=x$ } \\ 0, & \quad \mbox{otherwise.} \end{cases}
\ees

The set of moves $G$ is $\{\gamma_{xy},x,y\in \{1,\ldots,L\}\}$, the move $\gamma_{xy}$ being defined by

\bes
\gamma_{xy}\eta=\begin{cases}
\eta+\delta_y-\delta_x, \quad \mbox{if $\eta_x>0$}\\
\eta \quad \mbox{otherwise}.
\end{cases}
\ees

The move $\gamma_{xy}$ transfers a particle from site $x$ to site $y$. For any site $x$ we consider a non negative function $c_x:\N\rightarrow \mathbb{R}_{\geq 0}$ such that $c_x(0)=0$. $c_x(k)$ is the rate at which particles are expelled from site $x$ when $k$ particles are on site. Thus, the transition rates for the zero range dynamics rates are given by

\be\label{eq: rates zero range}
\forall\eta\in\Omega,1\leq x,y\leq L,  \quad c(\eta,\gamma_{xy}) = \frac{1}{L} c_x(\eta_x).
\ee

and the generator is
\bes
\cL f(\eta) = \frac{1}{L} \sum_{ x,y \leq L} \, c_x(\eta_x)\nabla_{\gamma_{xy}}f(\eta).
\ees
Assumptions \ref{H0},\ref{H1} are easily seen to be satisfied. In particular, $(\gamma_{xy})^{-1}=\gamma_{yx}$ and the reversible measure is given by (see \cite{caputo2009convex})

\bes
\inv(\eta) = \frac{1}{Z}\prod_{x: \eta_x>0} \prod_{k=1}^{\eta_x}\frac{1}{c_{x}(k)},\quad \eta\in\Omega.
\ees
where $Z$ is the normalization. 

\begin{theorem}\label{thm: zero range}
Assume that there exist $c,\delta\geq0$ such that
\be\label{zero range perturbative cond}
\delta \leq c, \quad  c \leq c_x(k+1)-c_x(k)\leq c+\delta, \quad \forall k\in \N,x\in V .
\ee
Then for the zero range dynamics the following holds
\bei
\item[(i)] If $\phi$ satisfies \ref{H2}, then the convex Sobolev inequality \eqref{convex sob} holds with $\kappa_{\phi}=c-\delta$. In particular, MLSI holds with $\kappa_1=c-\delta$.
\item[(ii)] For $\alpha \in (1,2]$, the Beckner inequality \eqref{beckner ineq} holds with $\kappa_{\alpha}=\alpha c-\delta$.
 \eei
 \end{theorem}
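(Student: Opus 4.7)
The plan is to apply Proposition~\ref{cor: sufficient cond} with coupling rates that exploit the structural property $c(\eta,\gamma_{zw})=c_z(\eta_z)/L$: the rate depends only on the source site $z$. Since $\Omega$ is already finite (the number of particles is fixed), no localization via Corollary~\ref{cor: countable state space} is required. For each $(\eta,\gamma_{xy})\in\supp$ set $\bar\eta=\gamma_{xy}\eta$ and abbreviate $a=c_x(\eta_x)$, $b=c_x(\eta_x-1)$, $g=c_y(\eta_y+1)$, $e=c_y(\eta_y)$; the hypothesis then reads $a-b,\, g-e \in [c,c+\delta]$.

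The coupling rates I would use are: parallel transitions $(\gamma_{zw},\gamma_{zw})$ at rate $c_z(\eta_z)/L$ for each $z\notin\{x,y\}$ (where the two marginal rates already agree); parallel transitions $(\gamma_{xw},\gamma_{xw})$ at the common rate $b/L$ and $(\gamma_{yw},\gamma_{yw})$ at $e/L$; and the excess rates $(a-b)/L$ and $(g-e)/L$ channelled into the merging pairs $(\gamma_{xw},\gamma_{yw})$ for each $w$ (with the degenerate instances $(\gamma_{xy},\zero)$ and $(\zero,\gamma_{yx})$ arising when $w=y$ and $w=x$). These merging pairs all bring the two chains to the common configuration $\eta-\delta_x+\delta_w$, so they contribute to the meeting set of~\eqref{def: kappa'''}. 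Admissibility is guaranteed up to a residual of size $|(a-b)-(g-e)|\leq\delta$, which is routed into a non-merging pair at the price of a $\delta$-slack in the gradient estimate.

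Following the blueprint of Theorem~\ref{thm: LSI int random walks}, I would apply Lemma~\ref{lem: coupling organize} and split the resulting upper bound according to the type of coupling pair. The parallel contributions from sources $z\notin\{x,y\}$ cancel after reindexing $\eta\mapsto\gamma_{zw}\eta$, using the reversibility identity $\inv(\eta)c_z(\eta_z)=\inv(\gamma_{zw}\eta)c_w(\eta_w+1)$ together with the commutation of the moves $\gamma_{zw}$ and $\gamma_{xy}$ when $z\notin\{x,y\}$; the parallel contributions at $x,y$ cancel by a symmetrization under the label swap $x\leftrightarrow y$ combined with $\inv(\eta)a=\inv(\bar\eta)g$. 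What remains is the contribution of the merging pairs, which (since $\Phi$ vanishes on the diagonal) yields $-\kappa'\cE(\phi'(f),f)$ with $\kappa'=c-\delta$. A direct summation of merging rates gives $\kappa'''\geq c$ in~\eqref{def: kappa'''}, using $a,g\geq c$. Proposition~\ref{cor: sufficient cond}(i) and~(iii) then deliver the convex Sobolev inequality with $\kappa_\phi=c-\delta$ and the Beckner inequality with $\kappa_\alpha=(c-\delta)+(\alpha-1)c=\alpha c-\delta$. The main obstacle is the fine bookkeeping at sites $x$ and $y$, where the asymmetric treatment of source-$x$ and source-$y$ moves requires combining the residual-pair contributions with the merging-pair contributions in a way that precisely produces the $-\delta$ correction to the ideal constant $c$.
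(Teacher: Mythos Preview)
Your outline follows the template of Proposition~\ref{cor: sufficient cond}, but the paper explicitly departs from that template for the zero-range model: it works with the \emph{identity}~\eqref{eq: organize} of Lemma~\ref{lem: coupling organize}, not with the upper bound~\eqref{eq: basic upper bound}. This is not cosmetic --- it is where your argument breaks.

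The concrete gap is in the sentence ``routed into a non-merging pair at the price of a $\delta$-slack in the gradient estimate.'' If you bound the residual pairs $(\gamma_{xw},\zero)$ (or $(\zero,\gamma_{yw})$) through $\coupgrad$, you pick up the positive term $\fphi(\gamma_{xw}\eta,\gamma_{xy}\eta)$ (respectively $\fphi(\eta,\gamma_{xw}\eta)$). After the reversibility reindexing $\eta'=\gamma_{xw}\eta$ these become genuine Dirichlet-form contributions, but the leftover sum over the free source label (over $x$ in the first case, over $y$ in the second) produces a factor bounded only by $L\delta$, so each residual block contributes up to $\delta\cdot 2\cE(\phi'(f),f)$. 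Since both blocks are present, the total is $\leq 2\delta\cdot 2\cE$, which together with the merging contribution $-c\cdot 2\cE$ yields at best $\kappa'=c-2\delta$, not $c-\delta$. There is no visible symmetrization or cancellation between the two positive blocks that would recover the missing factor, and indeed the naive $\coupgrad$ route is known to give worse constants (cf.\ the remark after Theorem~\ref{thm: zero range} about \cite{jungel2017discrete}).

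What the paper does instead is keep, for the residual pairs, the exact first-order term
\[
\mathrm{D}\Phi(f(\eta),f(\gamma_{xy}\eta))\cdot\colvec{\nabla_{\gamma_{xw}}f(\eta)}{0}
\quad\text{and}\quad
\mathrm{D}\Phi(f(\eta),f(\gamma_{xy}\eta))\cdot\colvec{0}{\nabla_{\gamma_{yw}}f(\gamma_{xy}\eta)}
\]
from~\eqref{eq: organize}. One then shows $B=C$ by a single reversibility step, expands $\partial_a\Phi$ and discards two nonnegative pieces using only the convexity of $\phi$ (not of $\Phi$), and finally rewrites the remainder $B.1$ \emph{twice} via reversibility (once through $\gamma_{xy}$, once through $\gamma_{xw}$); summing the two representations and swapping $y\leftrightarrow w$ collapses everything to a single Dirichlet sum with coefficient exactly $\delta$. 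This ``Caputo--Posta'' manipulation uses the explicit form of $\partial_a\Phi$ and cannot be reproduced once you have already replaced the $\mathrm{D}\Phi$ term by $\coupgrad_{\gamma,\bgamma}\fphi$.

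A smaller point: the parallel cancellation at sources $x$ and $y$ is not a plain $x\leftrightarrow y$ swap. All parallel terms (including $z\in\{x,y\}$) are treated together in Lemma~\ref{lem: neutral terms zero range} by subtracting $\sum_{x\neq y}\cL g^{xy}$ with $g^{xy}(\eta)=c_x(\eta_x)\fphi(\eta,\gamma_{xy}\eta)$ and then showing the leftover boundary pieces $A.1$ and $A.2$ match via the bijection $\gamma_{zx}:\{\eta_z>0\}\to\{\eta_x>0\}$.
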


To prove this result we modify slightly the pattern followed so far: instead of using Corollary \ref{cor: sufficient cond} we take one step back and use the identity \eqref{eq: organize} from Lemma \ref{lem: coupling organize} as a starting point.
\begin{remark}
MLSI has been obtained in \cite{caputo2009convex} with the same estimate for $\kappa_1$ as the one given by
Theorem \ref{thm: zero range}. In the recent preprint \cite{hermon2019entropy} MLSI is established removing the assumption that $c\leq \delta$ and only assuming $\delta<+\infty$. This was done using the so called martingale method. As it is remarked in \cite{caputo2009convex}, the entropy may fail to be convex if no relation is imposed between $c$ and $\delta$ and the estimates $\kappa_1\geq c-\delta$ seems to be the best one obtained so far using the Bakry \'Emery approach. Lower bounds for the entropic Ricci curvature of the zero range dynamics have been obtained in \cite{fathi2016entropic}. Concerning Beckner's inequalities with $\alpha\in (1,2)$, the only results we are aware of are those in \cite{jungel2017discrete} on which Theorem \ref{thm: zero range} improves. Indeed the constant obtained there is $\alpha c -(3+2^{\alpha-2}-\alpha)\delta$. Finally, Theorem \ref{thm: zero range} establishes the exponential decay of $\phi$-entropies for of arbitrary functions $\phi$ satisfying \ref{H2}, whereas the results of \cite{jungel2017discrete} cover $\phi=\phi_{\alpha}$. Concerning the spectral gap $(\alpha=2)$ better estimates are known. In particular under the same assumptions of Theorem \ref{thm: zero range} the bound $\kappa_2\geq 2\Delta$ is found in \cite{boudou2006spectral}. A slight modification of the proof of Theorem \ref{thm: zero range} can be used to obtain the same lower bound.
\end{remark}

\begin{proof}
We first prove (i). Let $(\eta,\gamma_{xy})\in\cS$, i.e. $\eta_x>0$. Define

\be\label{def: coup rates zero range}
\couprate(\eta,\gamma_{xy}\eta,\gamma,\bgamma)=\begin{cases}
\frac{1}{L}\min\{c_z(\eta_z),c_z(\gamma_{xy}\eta_z)\}, & \quad \mbox{if $\gamma=\bgamma=\gamma_{zw}$,}\\
\frac{1}{L}[c_x(\eta_x)-c_x(\eta_x-1)-c], & \quad \mbox{if $\gamma=\gamma_{xw},\bgamma=\zero$,}\\
\frac{1}{L}[c_y(\eta_y+1)-c_y(\eta_y)-c], & \quad \mbox{if $\gamma=\zero,\bgamma=\gamma_{yw}$,}\\
\frac{1}{L}c, & \quad \mbox{if $\gamma=\gamma_{xw},\bgamma=\gamma_{yw}$,}\\
0, & \quad \mbox{ otherwise.}
\end{cases}
\ee
It can be verified with a direct calculation that \eqref{def: coup rates int rw} defines admissible coupling rates. In order to do this, we observe that  since $c_z(\cdot)$ is an increasing function

\bes
\min\{c_z(\eta_z),c_z(\gamma_{xy}\eta_z)\} = \begin{cases}
c_z(\eta_z-1), \quad \mbox{if $\eta_z>0$ and $x=z,y\neq z$}\\
c_z(\eta_z), \quad \mbox{otherwise.}
\end{cases}
\ees

Consider now $\phi$ satisfying \ref{H2},$f>0$ and the expression for$\frac{\De}{\De t} 2\cE(\phi'(f_t),f_t)$ provided by \eqref{eq: organize}. Using the convexity of $\Phi$ we get

\begin{align}\label{eq: neutral terms zero range}
\nonumber&\sum_{\substack{(\eta,\gamma_{xy})\in\supp \\ zw}}c(\eta,\gamma_{xy})\couprate(\eta,\gamma_{xy}\eta,\gamma_{zw},\gamma_{zw})\mathrm{D}\Phi(f(\eta),f(\gamma_{xy}\eta)) \cdot \colvec{\nabla_{\gamma_{zw}}f(\eta)}{\nabla_{\gamma_{zw}}f(\gamma_{xy}\eta)}\inv(\eta)\\
&\leq\sum_{\substack{(\eta,\gamma_{xy})\in\supp \\ zw}}c(\eta,\gamma_{xy})\couprate(\eta,\gamma_{xy}\eta,\gamma_{zw},\gamma_{zw})\coupgrad_{\gamma_{zw},\gamma_{zw}}\fphi(\eta,\gamma_{xy}\eta)\inv(\eta):=A
\end{align}

Using again the convexity of $\Phi$  and the fact that $\Phi$ vanishes on the diagonal we obtain that

\begin{align}\label{eq: good terms zero range}
\nonumber\sum_{\substack{(\eta,\gamma_{xy})\in\supp\\ w\leq L}}c(\eta,\gamma_{xy})&\couprate(\eta,\gamma_{xy}\eta,\gamma_{xw},\gamma_{yw})\mathrm{D}\Phi(f(\eta),f(\sigma\eta)) \cdot \colvec{\nabla_{\gamma_{zw}}f(\eta)}{\nabla_{\gamma_{zw}}f(\sigma\eta)}\inv(\eta) \\
&\leq -c \sum_{(\eta,\gamma_{xy})\in\supp}c(\eta,\gamma_{xy})\fphi(\eta,\gamma_{xy}\eta)\inv(\eta)=-2c\cE(\phi'(f),f).
\end{align}

Plugging \eqref{eq: neutral terms zero range} and \eqref{eq: good terms zero range} back into \eqref{eq: organize} we obtain, in view of our choice of coupling rates \eqref{def: coup rates zero range},
\be\label{eq: Dirichlet form derivative zero range}
\frac{\De}{\De t}2\cE(\phi'(f_t),f_t)\Big|_{t=0} \leq -2c\cE(\phi'(f),f)+A+B+C,
\ee
 where 
\begin{align*}
A&= \frac{1}{L^2}\sum_{\eta,xy,zw} c_x(\eta_x) \min\{c_z(\eta_z),c_z(\gamma_{xy}\eta_z) \}\coupgrad_{\gamma_{zw},\gamma_{zw}}\fphi(\eta,\gamma_{xy}\eta)\inv(\eta)\\
B&= \frac{1}{L^2}\sum_{\eta,xy,w}c_x(\eta_x) (c_x(\eta_x)-c_x(\eta_x-1)-c) \partial_a \Phi(f(\eta),f(\gamma_{xy}\eta)) \nabla_{\gamma_{xw}}f(\eta)\inv(\eta)\\
C&= \frac{1}{L^2}\sum_{\eta,xy,w}c_x(\eta_x) (c_y(\eta_y+1)-c_x(\eta_y)-c) \partial_b \Phi(f(\eta),f(\gamma_{xy}\eta)) \nabla_{\gamma_{yw}}f(\gamma_{xy}\eta)\inv(\eta)
\end{align*}
In the rest of the proof, we shall show that $A=0,B=C$ and $2B \leq 2\delta \cE(\phi'(f),f)$. Once this is done, by using these relations in \eqref{eq: Dirichlet form derivative zero range}, we obtain the proof of (i). The proof that $A=0$ is done separately at Lemma \ref{lem: neutral terms zero range}.  The other relations are proven following very closely of Lemma 4.1 in \cite{caputo2009convex} that deals with the case $\phi=\phi_1$. This proof adapts in a straighforward way to the current more general setup and we refer to it whenever the use of the reversibility relation \eqref{eq rev} is not explained in full details in what follows. Exchanging the labels $x$ and $y$ and using that $\inv(\eta)c_y(\eta_y) = \inv(\gamma_{yx}\eta)c_x(\gamma_{yx}\eta_x)$ for all $\eta$ such that $\eta_y>0$ we get that

\begin{align*}
C &= \frac{1}{L^2}\sum_{\eta,xy,w}c_x(\gamma_{yx}\eta_x) (c_x(\gamma_{yx}\eta_x)-c_x(\gamma_{yx}\eta_x-1)-c) \partial_b \Phi(f(\eta),f(\gamma_{yx}\eta))\nabla_{\gamma_{xw}}f(\gamma_{yx}\eta)\inv(\gamma_{yx}\eta)\\
 &= \frac{1}{L^2}\sum_{\eta,xy,w}c_x(\eta_x) (c_x(\eta_x)-c_x(\eta_x-1)-c) \partial_b \Phi(f(\gamma_{xy}\eta),f(\eta)) \nabla_{\gamma_{xw}}f(\eta)\inv(\eta).
\end{align*}
Since $\Phi$ is symmetric in its arguments, we have $\partial_b \Phi(f(\gamma_{xy}\eta),f(\eta)) =\partial_a \Phi(f(\eta),f(\gamma_{xy}\eta))$ and therefore $C=B$. Let us now focus on $B$. Recalling the definition of $\Phi$ and exchanging the labels $y$ and $w$ we get 

\begin{align*}
    -L^2 B &= \sum_{\eta,xy,w}c_x(\eta_x) (c_x(\eta_x)-c_x(\eta_x-1)-c) [\phi''(f(\eta)) \nabla_{\gamma_{xw}}f(\eta) \nabla_{\gamma_{xy}}f(\eta) ]\inv(\eta) \\
    &+\sum_{\eta,xy,w}c_x(\eta_x) (c_x(\eta_x)-c_x(\eta_x-1)-c) [\phi'(f(\gamma_{xw} \eta)) -\phi'(f(\eta))] \nabla_{\gamma_{xy}}f(\eta) \inv(\eta) 
\end{align*}
Using the convexity of $\phi$, we obtain that the first summand in the above expression is non negative. Concerning the second summand, rewriting $\phi'(f(\gamma_{xw}\eta))-\phi'(f(\eta))=\phi'(f(\gamma_{xw}\eta))-\phi'(f(\gamma_{xy}\eta))+\phi'(f(\gamma_{xy}\eta))-\phi'(f(\eta))$ allows to isolate another non-negative term. Therefore,
\begin{align*}
-L^2 B&\geq \sum_{\eta,xy,w}c_x(\eta_x) (c_x(\eta_x)-c_x(\eta_x-1)-c) [\phi'(f(\gamma_{xw} \eta)) -\phi'(f(\gamma_{xy}\eta))]\nabla_{\gamma_{xy}}f(\eta)\inv(\eta)\\
&=\sum_{\eta,xy,w}c_x(\eta_x) (c_x(\eta_x)-c_x(\eta_x-1)-c) [\phi'(f(\gamma_{xw} \eta)) -\phi'(f(\gamma_{xy}\eta))]f(\gamma_{xy}\eta)\inv(\eta):=B.1.
\end{align*}
where to obtain the last inequality we used that $\sum_{y,w}f(\eta) [\phi'(f(\gamma_{xw} \eta)) -\phi'(f(\gamma_{xy}\eta))]=0$. Next, we rewrite $B.1$ twice using reversibility. For the first one we use $\inv(\eta)c_x(\eta_x)=\inv(\gamma_{xy}\eta)c_y(\gamma_{xy}\eta_y)$ to obtain

\begin{align}\label{eq: B1 via rev 1}
\nonumber B.1&=\sum_{\substack{\eta,xy,w \\ x\neq y, \eta_x>0}}c_y(\gamma_{xy}\eta_y) (c_x(\gamma_{xy}\eta_x+1)-c_x(\gamma_{xy}\eta_x)-c) [\phi'(f(\gamma_{yw}\gamma_{xy} \eta)) -\phi'(f(\gamma_{xy}\eta))]f(\gamma_{xy}\eta)\inv(\gamma_{xy}\eta) \\
\nonumber&+\sum_{\substack{\eta,xy,w\\ x=y}}c_y(\eta_y) (c_x(\eta_x)-c_x(\eta_x-1)-c) [\phi'(f(\gamma_{yw} \eta)) -\phi'(f(\eta))]f(\eta)\inv(\eta)\\
\nonumber&=\sum_{\substack{\eta,xy,w\\ x\neq y}}c_y(\eta_y) (c_x(\eta_x+1)-c_x(\eta_x)-c) [\phi'(f(\gamma_{yw} \eta)) -\phi'(f(\eta))]f(\eta)\inv(\eta) \\
&+\sum_{\substack{\eta,xy,w\\ x=y}}c_y(\eta_y) (c_x(\eta_x)-c_x(\eta_x-1)-c) [\phi'(f(\gamma_{yw} \eta)) -\phi'(f(\eta))]f(\eta)\inv(\eta).
\end{align}
If we use $\inv(\eta)c_x(\eta_x)=\inv(\gamma_{xw}\eta)c_w(\gamma_{xw}\eta_w)$, then we get 
\begin{align}\label{eq: B1 via rev 2}
\nonumber B.1&=\sum_{\substack{\eta,xy,w\\x\neq w, \eta_x>0}}c_w(\gamma_{xw}\eta_w) (c_x(\gamma_{xw}\eta_x+1)-c_x(\gamma_{xw}\eta_x)-c) [\phi'(f(\gamma_{xw} \eta)) -\phi'(f(\gamma_{wy}\gamma_{xw}\eta))]f(\gamma_{wy}\gamma_{xw}\eta)\inv(\gamma_{xw}\eta)\\
\nonumber&+\sum_{\substack{\eta,xy,w\\x=w}}c_w(\eta_w) (c_x(\eta_x)-c_x(\eta_x-1)-c) [\phi'(f(\eta)) -\phi'(f(\gamma_{wy}\eta))]f(\gamma_{wy}\eta)\inv(\eta)\\
\nonumber&=\sum_{\substack{\eta,xy,w \\ x\neq w}}c_w(\eta_w) (c_x(\eta_x+1)-c_x(\eta_x)-c) [\phi'(f(\eta)) -\phi'(f(\gamma_{wy}\eta))]f(\gamma_{wy}\eta)\inv(\eta)\\
&+\sum_{\substack{\eta,xy,w\\x=w}}c_w(\eta_w) (c_x(\eta_x)-c_x(\eta_x-1)-c) [\phi'(f(\eta)) -\phi'(f(\gamma_{wy}\eta))]f(\gamma_{wy}\eta)\inv(\eta).
\end{align}
Exchanging the labels $y,w$ in \eqref{eq: B1 via rev 2}, summing the result with \eqref{eq: B1 via rev 1} and using \eqref{zero range perturbative cond} yields
\bes
\frac{1}{L^2}B.1 \geq -\frac{\delta}{2L} \sum_{\eta,y,w}c_y(\eta_y) [\phi'(f(\gamma_{yw} \eta)) -\phi'(f(\eta))][f(\gamma_{yw}\eta)-f(\eta)]\inv(\eta),
\ees
from which it follows that $2B\leq 2\delta\cE(\phi'(f),f)$. The proof of (i) is now complete. The proof of (ii) is almost identical, the only difference being that we can replace \eqref{eq: good terms zero range} with a better bound. Indeed, if instead of the convexity of $\Phi_{\alpha}$ we use \eqref{eq:Beck improve}, then we obtain

\begin{align*}
\nonumber\sum_{\substack{(\eta,\gamma^{xy})\in\supp \\ w\leq L}}c(\eta,\gamma_{xy})\couprate(\eta,\gamma_{xy}\eta,\gamma_{xw},\gamma_{yw})\mathrm{D}\Phi(f(\eta),f(\gamma_{xy}\eta)) \cdot \colvec{\nabla_{\gamma_{xw}}f(\eta)}{\nabla_{\gamma_{yw}}f(\gamma_{xy}\eta)}\inv(\eta) \\
\leq - c\alpha \sum_{(\eta,\gamma_{xy})\in\supp}c(\eta,\gamma_{xy})\fphi(\eta,\gamma_{xy}\eta)\inv(\eta)=-c\alpha\cE(\phi'(f),f).
\end{align*}

\end{proof}

\begin{lemma}\label{lem: neutral terms zero range}
For any $f>0$ and $\phi:\R\rightarrow\R$ we have $A=0$, where

\bes
A=\frac{1}{L^2}\sum_{\eta,xy,zw} c_x(\eta_x) \min\{c_z(\eta_z),c_z(\gamma_{xy}\eta_z) \}\coupgrad_{\gamma_{zw},\gamma_{zw}}\fphi(\eta,\gamma_{xy}\eta)\inv(\eta).
\ees
\end{lemma}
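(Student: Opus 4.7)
The plan is to use reversibility of $\inv$ together with the commutativity of zero-range moves to rewrite the positive-$\Phi$ contribution of $A$ so that it cancels the negative-$\Phi$ contribution after summation over $z,w$.

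First, I will exploit that \eqref{zero range perturbative cond} forces each $c_z$ to be non-decreasing, which gives the clean formula
\[ \min\{c_z(\eta_z),c_z(\gamma_{xy}\eta_z)\} = \begin{cases} c_z(\eta_z), & z\neq x,\\ c_x(\eta_x-1), & z=x. \end{cases} \]
This splits $A=A'+A''$ along the two cases. Second, I will verify that on the support of the summand (i.e.\ wherever $c_x(\eta_x)\min\{c_z(\eta_z),c_z(\gamma_{xy}\eta_z)\}>0$) the moves commute: $\gamma_{zw}\gamma_{xy}\eta=\gamma_{xy}\gamma_{zw}\eta$. This is a short case check going through the possible overlaps between $\{x,y\}$ and $\{z,w\}$; the only configurations where commutativity would fail (for instance $\eta_x\leq 1$ together with $z=x$, or $\{x,y\}=\{w,z\}$ with $\eta_x=0$) are precisely those killed by the prefactor, so we may freely use commutativity throughout.

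Armed with these, I will apply the change of variable $\eta''=\gamma_{zw}\eta$ in the positive-$\Phi$ summand of $A'$, and analogously $\eta''=\gamma_{xw}\eta$ in $A''$. Commutativity turns the $\Phi$-term into $\Phi(f(\eta''),f(\gamma_{xy}\eta''))$; reversibility of $\inv$ under $\gamma_{zw}$ (resp.\ $\gamma_{xw}$) replaces $c_z(\eta_z)\inv(\eta)$ by $c_w(\eta''_w)\inv(\eta'')$; and the outer factor $c_x(\eta_x)$ becomes $c_x(\eta''_x)$ or $c_x(\eta''_x-1)$ according to whether $w\neq x$ or $w=x$. Summing the resulting expressions over $w$ (the trivial $w=z$ contributions cancel immediately between positive and negative parts, since $\gamma_{zz}$ is the identity) and then over $z$, and comparing with the negative-$\Phi$ parts, which simplify readily because $\sum_w c_w(\eta_w)$ does not depend on the move $(x,y)$, every remaining term cancels algebraically.

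The main obstacle is the bookkeeping: each sub-regime (determined by whether $w=x$, $z=x$, or $w=z$) produces a slightly different transformed weight, and the cancellations have to be tracked carefully. The nontrivial pairing is between the ``diagonal'' $w=x$ contributions in $A'$ and the bulk of $A''$, while the off-diagonal contributions in $A'$ match the corresponding negative-$\Phi$ piece. Notably, no convexity of $\phi$ nor any structural property of $f$ is used; the identity $A=0$ is purely algebraic, stemming from reversibility and commutativity alone.
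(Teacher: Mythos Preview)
Your approach is correct and rests on the same two ingredients as the paper's proof: reversibility of $\inv$ and commutativity of the zero-range moves on the support of the summand. The difference is organizational. The paper avoids your case-by-case bookkeeping by subtracting from $L^2A$ the null quantity $\sum_{x\neq y}\sum_{\eta}\cL g^{xy}(\eta)\inv(\eta)$ with $g^{xy}(\eta)=c_x(\eta_x)\fphi(\eta,\gamma_{xy}\eta)$; this absorbs the bulk of the cancellations (your ``off-diagonal'' matching) in one stroke via the invariance of $\inv$, and what remains is a single pair $A.1,A.2$ of residual sums shown equal by one change of variable $\eta\mapsto\gamma_{zx}\eta$. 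Your direct change-of-variables route reaches the same endpoint but requires tracking the sub-cases $w=x$, $w\neq x$, $z=x$, $z\neq x$ separately; the pairing you describe (diagonal $w=x$ in $A'$ against $A''$, off-diagonal $A'$ self-cancelling) does indeed close, though one should note that the positive part of $A''$ after the change of variable must be added to the $w=x$ block to balance the counts. Either way, no convexity of $\phi$ is needed, as you correctly observe.
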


\begin{proof}We begin by observing if $x=y$ or $z=w$, then $\coupgrad_{\gamma_{zw},\gamma_{zw}}\fphi(\eta,\gamma_{xy}\eta)=0$. 
Therefore we can assume w.l.o.g. that $x\neq y$ and $z\neq w$ in the summation that defines $A$. Substracting from $A$ the null term  
\bes \sum_{x\neq y}\sum_{\eta\in\Omega}\cL g^{xy}(\eta)\inv(\eta), \text{where} \quad g^{xy}(\eta) = c_x(\eta_x)\fphi(\eta,\gamma_{xy}\eta)\ees 
and using that for $x\neq y,z\neq w$ we have
\beas
c_x(\eta_x) \min\{ c_z(\eta_z),c_z(\gamma_{xy}\eta_z)\} - c_z(\eta_z)c_x(\gamma_{zw}\eta_x)=\begin{cases} c_z(\eta_z)[c_x(\eta_x)-c_x(\eta_x+1)],&\quad \mbox{if $x=w$, $\eta_z>0$}\\ 0, &\quad \mbox{otherwise,} \end{cases}\\
c_x(\eta_x) \min\{ c_z(\eta_z),c_z(\gamma_{xy}\eta_z)\} - c_z(\eta_z)c_x(\eta_x)=\begin{cases} c_x(\eta_x)[c_x(\eta_x-1)-c_x(\eta_x)],&\quad \mbox{if $x=z,\eta_x>0$}\\ 0, &\quad \mbox{otherwise.} \end{cases}
\eeas
we obtain that $L^2A=A.1-A.2$ with
\begin{align*}
A.1 &= \sum_{\substack{\eta:\eta_z>0 \\ x\neq y, z\neq x}} c_z(\eta_z)[c_x(\eta_x)-c_x(\eta_x+1)]\fphi(\gamma_{zx}\eta,\gamma_{zy}\eta)\inv(\eta),\\
A.2 &= \sum_{\substack{\eta:\eta_x>0 \\ x\neq y, w\neq x}} c_x(\eta_x) [c_x(\eta_x-1)-c_x(\eta_x)]\fphi(\eta,\gamma_{xy}\eta)\inv(\eta).
\end{align*}
By reversibility we have that if $\eta_z>0$ then $\inv(\eta)c_z(\eta_z)=\inv(\gamma_{zx}\eta)c_x(\gamma_{zx}\eta)$. Therefore
\begin{align*}
A.1&= \sum_{\substack{x\neq y\\ z\neq x}}\sum_{\eta:\eta_z>0 } c_x(\gamma_{zx}\eta_x)[c_x(\gamma_{zx}\eta_x-1)-c_x(\gamma_{zx}\eta_x)]\fphi(\gamma_{zx}\eta,\gamma_{xy}\gamma_{zx}\eta)\inv(\gamma_{zx}\eta).
\end{align*}
Since the map $\gamma_{zx}$ induces a bijection between the sets $\{\eta:\eta_z>0\}$ and $\{\eta:\eta_x>0\}$ we get that $A.1=A.2$, and finally that $A=0$. 
\end{proof}

\section{Appendix}

\paragraph{Proof of Lemma \ref{lem: reny admissible}}
\begin{proof}
We begin by showing the convexity of $\Phi_{\alpha}$. The case $\alpha=1$ is well known. We do the proof for the sake of completeness. In this case $\Phi_1(a,b)=(b-a)(\log b -\log a)$ and for all $a,b>0$ we get

\bes
\mathrm{Hess}\Phi_{\alpha}(a,b)  \begin{pmatrix} \frac1a+\frac{b}{a^2} &-(\frac1b+\frac1a) \\-(\frac1b+\frac1a) & 
\frac1b+\frac{a}{b^2} \end{pmatrix},
\ees
which is a positive semidefinite matrix since the trace is positive and the determinant is $0$.

For $\alpha\in(1,2)$ we get from the definition of $\Phi_{\alpha}$ that $\Phi_{\alpha}(a,b)=\frac{\alpha}{\alpha-1}(a-b)(a^{\alpha-1}-b^{\alpha-1})$. Therefore for all $a,b>0$:
\bes
\mathrm{Hess}\Phi_{\alpha}(a,b) = \begin{pmatrix} \alpha^2 a^{\alpha-2}+\alpha(2-\alpha)a^{\alpha-3}b& -\alpha (a^{\alpha-2} +b^{\alpha-2})\\ -\alpha (a^{\alpha-2} +b^{\alpha-2}) & \alpha^2 b^{\alpha-2}+\alpha(2-\alpha)b^{\alpha-3}a \end{pmatrix}
\ees
It is easily seen that the trace of this matrix is always nonnegative. To conclude, we verify that so is its determinant which, after some calculations turns out to be

\bes
\alpha^2 (-(a^{\alpha-2}-b^{\alpha-2})^2+ \alpha(2-\alpha) (ab)^{\alpha-3}(a-b)^2 )
\ees

To show that the above expression is always non negative we assume w.l.o.g. that $b\geq a$,divide by the positive constant $\alpha^{2}a^{2\alpha-4}$ and set $z=a/b$. We obtain the desired conclusion if we can show that 
\bes
\inf_{z\geq 1} -(z^{\alpha-2}-1)^2+(2-\alpha)\alpha z^{\alpha-3}(z-1)^2\geq 0
\ees
Set $g(z)= -(z^{\alpha-2}-1)^2+(2-\alpha)\alpha z^{\alpha-3}(z-1)^2$. A standard calculation yields 
\bes
g'(z)=2(2-\alpha)(z^{\alpha-2}-1)z^{\alpha-3} +(2-\alpha)\alpha(\alpha-3)z^{\alpha-4}(z-1)^2+2(2-\alpha)\alpha
z^{\alpha-3}(z-1)\ees
Since $(2-\alpha-2)z^{\alpha-4}\geq0$ we have $g'(z)\geq 0$ iff
\beas
2(z^{\alpha-2}-1)z+\alpha(\alpha-3)(z-1)^2 +2\alpha(z-1)z\geq 0\\
\Leftrightarrow 2(z^{\alpha-2}-1)z + (z-1)[\alpha(\alpha-1)z+\alpha(3-\alpha)]\geq 0
\eeas

Now observe that, uniformly on $z\geq 1$ we have $z^{\alpha-2}\geq z^{-1}$. Therefore 
\bes
 2(z^{\alpha-2}-1)z + (z-1)[\alpha(\alpha-1)z+\alpha(3-\alpha)] \geq (z-1)[\alpha(\alpha-1)z+\alpha(3-\alpha)-2]\geq0
\ees
since $\alpha(3-\alpha)-2 \geq 0$ in $[1,2]$, from which we get that $g(z)$ is increasing on $[1,+\infty)$. Since $\lim_{z\rightarrow 1} g(z)=0$, the desired conclusion follows. Let us now turn to the proof of \eqref{eq:Beck improve}. Let $\alpha\in(1,2]$. If $a'=b'$ we obtain after some calculations that
\begin{align*}\label{eq: Beck convexity}
\nonumber &\left(\Phi_{\alpha}(a',b')-\Phi_{\alpha}(a,b)  - \mathrm{D}\Phi_{\alpha} (a,b) \cdot \colvec{a'-a}{b'-b} \right) \\
&=\alpha a^{\alpha-2}(b-a)(a'-a)+\alpha b^{\alpha-2}(a-b)(b'-b)\\
&=\alpha a'(a^{\alpha-2}(b-a)+  b^{\alpha-2}(a-b) ) + (\alpha-1)\Phi_{\alpha}(a,b).
\end{align*} 
Using the concavity of $x\mapsto x^{\alpha-1}$ to bound $a^{\alpha-2}(b-a)$ and $b^{\alpha-2}(a-b)$ yields \eqref{eq:Beck improve}. It remains to prove \eqref{eq: MLSI improve}. A direct calculation allows gives that the right hand side of \eqref{eq: MLSI improve}  is worth $(a-b)^2 (1/a+1/b)$. Therefore \eqref{eq: MLSI improve} is equivalent to ask that for all $a,b>0$
\bes
(1/a+1/b) \geq 2\frac{(\log b-\log a)}{b-a}
\ees
Assume w.l.o.g. that $a <b$. We have $2\frac{(\log b-\log a)}{b-a} =\frac{2}{(b-a)}\int_a^b1/s \De s$. Using the convexity of $s\mapsto 1/s$ we have

\bes
\frac{2}{(b-a)}\int_a^b\frac1s \De s\leq\frac{2}{(b-a)^2}\int_a^b\frac{(s-a)}{a}+\frac{(b-s)}{b} \,\De s =\frac{1}{a}+\frac{1}{b}
 \ees
\end{proof}

\paragraph{Proof of Lemma \ref{lem: 1 jump approx}}

\begin{proof}
\underline{Step 1: Localization via stopping times}
In the proof, $C$ denotes a generic positive constant whose value may change from one expression to another. Let $(X^{\eta}_t)_{t\geq 0}$ be a continuous time random walk with generator $\cL$ and initial distribution $\delta_{\eta}$. We denote $T_1$ be the first jump time of the walk and define $\tilde{\mu}_t$ as the law of $X^{\eta}_{t\wedge T_1}$.  A straightforward calculation gives that 

\bes
\tilde\mu_{t} = \exp(-C(\eta)t)\delta_{\eta}+ \sum_{\gamma\in G} \frac{1-\exp(-C(\eta)t)}{C(\eta)}c(\eta,\gamma) \delta_{\gamma\eta}.
\ees
where $C(\eta)=\sum_{\gamma\in G}c(\eta,\gamma) $. It is easily seen that that $W^p_p(\bar\mu_t,\tilde{\mu}_t) \leq C t^{2}$. Therefore it suffices to show \eqref{eq: 1 jump approx} replacing $\bar{\mu}_t$ with $\tilde{\mu}_t$. Indeed, in this case we would have
\bes
W^p_p(\mu_t,\bar\mu_t) \leq (W_p(\mu_t,\tilde\mu_t)+W_p(\tilde\mu_t,\bar\mu_t))^p \leq 2^{p-1}(W^p_p(\mu_t,\tilde\mu_t)+W^p_p(\tilde\mu_t,\bar\mu_t))\leq Ct^2.
\ees
To this aim, we can exploit the fact that 
\be\label{eq: localization bound 1}
W^p_p(\mu_t,\tilde{\mu_t}) \leq \bbE\big[d(X^{\eta}_t,X^{\eta}_{t\wedge T_1})^p \big].
\ee
$(X^{\eta}_t)_{\geq 0}$ waits an exponential random time $T_1$ and then jumps to the state $\gamma\eta$ with a probability proportional to $c(\eta,\gamma)$. Therefore, using the strong Markov property we obtain
\be\label{eq: localization bound 2}
\bbP[d(X^{\eta}_t,X^{\eta}_{t\wedge T_1})\geq k] = \sum_{\gamma\in G} c(\eta,\gamma) \int_{0}^t\exp(-C(\eta)s) \bbP[d(X^{\gamma\eta}_{t-s},\gamma\eta)\geq k ]\De s
\ee
where $(X^{\gamma\eta}_{r})_{r\geq0}$ is a continuous time Markov chain with generator $\cL$ started at $\gamma\eta$. \\
\underline{Step 2: Bound on $\bbP[d(X^{\gamma\eta}_{t-s},\gamma\eta)\geq k ]$}. Fix $r>0$ and observe that 
\be\label{eq: tail partition}
\{d(X^{\gamma\eta}_r,\gamma\eta)\geq k\} =  \bigcup_{\substack{I^{+}\subseteq \{1,...,d\} \\ k_1+\ldots+k_d=k}} \{ (X^{\gamma\eta}_r)_i-\gamma\eta_i \geq k_i, i\in I^+\} \cap \{ \gamma\eta_i-(X^{\gamma\eta}_r)_i \geq k_i, i\in I^-\} .
\ee
For $i\leq d$ consider the counting process $(N^i_{r})_{r\geq 0}$ defined by
\bes
N^i_r-N^i_{r^-} = \begin{cases} 1, & \quad \mbox{if $i\in I^{+}, X^{\gamma\eta}_{r}=\gamma_i^+X^{\gamma\eta}_{r^-}$}\\
1, & \quad \mbox{if $i\in I^{-}, X_{r}=\gamma_i^-X^{\gamma\eta}_{r^-}$ and $(X^{\gamma\eta}_{r^-})_{i}\leq \gamma\eta_i$ }\\
0, & \mbox{otherwise}
\end{cases}
\ees
We remark that, because of \eqref{hyp: non explosion} we have $A<+\infty$, where 
\bes
A:=\max\{\sup\{ c(\eta,\gamma_i^+):\eta\in \N^d\}, \, \sup\{ c(\eta,\gamma_i^-):\eta_i \leq \eta_i+1 \} \}
\ees
Moreover, observe that for any $i\in I^+$ and conditionally on $X^{\gamma\eta}_{[0,r^{-}]}$, the process $N^i_{\cdot}$ jumps up at rate $c(X^{\gamma\eta}_{r^-},\gamma^+_i)\leq A$. If $i\in I^{-}$, then $N^i_{\cdot}$ jumps up at rate $c(\eta,\gamma_i^-)\leq A$ when $X^{\gamma\eta}_{r^-} \leq \gamma\eta_i$, and at rate $0$ when $X^{\gamma\eta}_{r^-} > \gamma\eta_i$. Therefore, for any fixed $r>0$ the random vector $(N^1_r,\ldots,N^r_d)$ is stochastically dominated by a random vector $(M^1_r,\ldots,M^d_r)$ whose components are independent Poisson random variables of parameter $rA$. Using this, we obtain

\begin{align*}
\bbP[\{ (X^{\gamma\eta}_r)_i-\gamma\eta_i \geq k_i, i\in I^+\} &\cap \{ \gamma\eta_i-(X^{\gamma\eta}_r)_i \geq k_i, i\in I^-\} ]\\
&\leq \bbP[N^{1}_r\geq k_1,\ldots,N^{d}_r\geq k_d ] \\
&\leq  \prod_{i=1}^d\bbP[M^{i}_r\geq k_i] \\
&\leq  (\exp(-Ar)+1)^d \prod_{i=1}^d \frac{(Ar)^{k^i}}{k_i!}.
\end{align*}
Using \eqref{eq: tail partition} and summing over all possible choices of $k_1,\ldots,k_d$ and  $I^+\subseteq\{1,...,d\}$ we obtain that for all $r>0$ and $k>0$

\be\label{eq: tail bound}
 \bbP[\{d(X^{\gamma\eta}_r,\gamma\eta)\geq k\}]\leq 2^d(1+\exp(Ar))^d\exp(-Ar d) \frac{(Adr)^k}{k!}.
\ee

\underline{Step 3: conclusion}
 Combining \eqref{eq: tail bound} and \eqref{eq: localization bound 2} we obtain that there exists $C>0$ such that for all $r>0,k>0$
\bes
\bbP[d(X^{\eta}_t,X^{\eta}_{T_1\wedge t})=k]\leq C^k \frac{t^{k+1}}{(k+1)!}
\ees
holds. Using this expression to bound side the right  of \eqref{eq: localization bound 2} we obtain that
\bes
W^p_p(\tilde\mu_t,\mu_t) \leq \sum_{k\geq 1} k^p C^k \frac{t^{k+1}}{(k+1)!} \leq t^2\sum_{k\geq 1} k^p C^k \frac{t^{k-1}}{k!}\leq t^2C.
\ees
\end{proof}

  \paragraph{Acknowledgements}{The author wishes to thank Paolo Dai Pra and Matthias Erbar for useful discussions.}
\bibliographystyle{plain}
\bibliography{Ref}

\end{document}